\DeclareMathOperator{\Real}{Re} 
\DeclareMathOperator{\spa}{span} 
 \DeclareMathOperator{\diver}{div}
\DeclareMathOperator{\Id}{\mathbf{Id}}
\DeclareMathOperator{\End}{End}\DeclareMathOperator{\trace}{trace}
\DeclareMathOperator{\GL}{GL}
\DeclareMathOperator{\Adm}{Adm}
\newcommand{\field}[1]{\mathbb{#1}}
\newcommand{\G}{\field{G}}                      
\newcommand{\F}{\field{F}}                      
\newcommand{\R}{\field{R}}                      
\newcommand{\C}{\field{C}}                      
\newcommand{\Heis}{\field{H}}                    
\newcommand{\eps}{\epsilon}
\newcommand{\ff}{{\mathfrak f}}
\newcommand{\fg}{{\mathfrak g}}
\newcommand{\fh}{{\mathfrak h}}
\newcommand{\heis}{{\mathfrak h}}
\newcommand{\cL}{{\mathcal L}}
\newcommand{\fv}{{\mathfrak v}}
\newcommand{\tta}{{\tt a}}
\newcommand{\ttg}{{\tt g}}
\newcommand{\tth}{{\tt h}}
\newcommand{\cX}{{\mathcal X}}
\newcommand{\boldA}{{\mathbf A}}
\newcommand{\boldB}{{\mathbf B}}
\newcommand{\boldC}{{\mathbf C}}
\newcommand{\bolde}{{\mathbf e}}
\newcommand{\boldJ}{{\mathbf J}}
\newcommand{\boldN}{{\mathbf N}}
\newcommand{\boldO}{{\mathbf O}}
\newcommand{\boldS}{{\mathbf S}}
\newcommand{\boldt}{{\mathbf t}}
\newcommand{\boldx}{{\mathbf x}}
\newcommand{\boldX}{{\mathbf X}}
\newcommand{\bx}{{\mathbf x}}
\newcommand{\bt}{{\mathbf t}}
\newcommand{\bb}{{\mathbf b}}
\newcommand{\boldOmega}{{\mathbf \Omega}}
\newcommand{\norm}[1]{\lVert #1 \rVert}
\newcommand{\deriv}[1]{{\frac{\partial}{\partial #1}}}
\newcommand{\bi}{{\mathbf i}}
\def\Barint_#1{\mathchoice
          {\mathop{\vrule width 6pt height 3 pt depth -2.5pt
                  \kern -8pt \intop}\nolimits_{#1}}%
          {\mathop{\vrule width 5pt height 3 pt depth -2.6pt
                  \kern -6pt \intop}\nolimits_{#1}}%
          {\mathop{\vrule width 5pt height 3 pt depth -2.6pt
                  \kern -6pt \intop}\nolimits_{#1}}%
          {\mathop{\vrule width 5pt height 3 pt depth -2.6pt
                  \kern -6pt \intop}\nolimits_{#1}}}
\theoremstyle{plain}
\newtheorem{theorem}{Theorem}
\newtheorem{corollary}[theorem]{Corollary}
\newtheorem{lemma}[theorem]{Lemma}
\newtheorem{proposition}[theorem]{Proposition}
\theoremstyle{definition}
\newtheorem{definition}[theorem]{Definition}
\newtheorem{example}[theorem]{Example}
\newtheorem{remark}[theorem]{Remark}
\newtheorem{conjecture}[theorem]{Conjecture}
\numberwithin{theorem}{section} \numberwithin{equation}{section}
\title{Stability theorems for H-type Carnot groups}
\date{\today}
\author{Jeremy T. Tyson}
\address{Department of Mathematics \\ University of Illinois at Urbana-Champaign \\ 1409 West Green Street \\ Urbana, IL 61801}
\email{tyson@illinois.edu}
\begin{document}
\maketitle

\begin{abstract}
We introduce the {\it H-type deviation} of a step two Carnot group $\G$. This quantity, denoted $\delta(\G)$, measures the deviation of $\G$ from the class of H-type (Heisenberg-type) groups. More precisely, we show that $\delta(\G)=0$ if and only if $\G$ carries a vertical metric which endows it with the structure of an H-type group. We compute the H-type deviation for several naturally occurring families of step two Carnot groups. In addition, we provide several analytic expressions which are comparable to the H-type deviation of $\G$. As a consequence, we establish new analytic characterizations for the class of H-type groups. For instance, denoting by $N(\ttg) = (||\boldx||_h^4 + 16 ||\boldt||_v^2)^{1/4}$, $\ttg = \exp(\boldx+\boldt)$, the canonical Kaplan-type quasi-norm in a step two group $\G$ with taming Riemannian metric $g = g_h \oplus g_v$, we show that $\G$ is H-type if and only if $||\nabla_0 N(\ttg)||_h^2 = ||\boldx||_h^2/N(\ttg)^2$ for all $\ttg \ne 0$. Similarly, we show that $\G$ is H-type if and only if $N^{2-Q}$ is $\cL$-harmonic in $\G \setminus \{0\}$. Here $\nabla_0$ denotes the horizontal differential operator, $\cL$ denotes the canonical sub-Laplacian, and $Q = \dim\fv_1+2\dim\fv_2$ is the homogeneous dimension of $\G$, where $\fv_1\oplus\fv_2$ is the stratification of the Lie algebra of $\G$. It is well-known that H-type groups satisfy both of these analytic conclusions. The new content of these results lies in the converse directions. Motivation for this work comes from a longstanding conjecture regarding polarizable Carnot groups. We formulate a quantitative stability conjecture regarding the fundamental solution for the sub-Laplacian on step two Carnot groups. Its validity would imply that all step two polarizable groups admit an H-type group structure. We confirm this conjecture for a sequence of anisotropic Heisenberg groups.
\end{abstract}

\tableofcontents

\section{Introduction}

The analytic theory of stratified Lie groups has been a subject of intense study ever since the foundational work of Folland, Rothschild, and Stein in the 1970s. When equipped with a sub-Riemannian structure, as determined by a metric on the horizontal (first) layer of the stratified Lie algebra, such groups serve as fertile testing grounds for extensions of first-order differential analysis and geometry to metric measure spaces. Standard references in this area include the books of Folland and Stein \cite{fs:hardy} and Bonfiglioli--Lanconelli--Uguzzoni \cite{blu}, as well as the later chapters in Stein's well-known harmonic analysis reference \cite{Ste}.

The Heisenberg group $\Heis^n$ was the progenitor for these developments, and continues to serve as a primary exemplar within the field. The existence, in this setting, of a large symmetry group makes the underlying analysis and geometry particularly elegant, and allows for simplified and streamlined arguments, whose analogs in the case of general stratified Lie groups are often substantially more involved. Folland's identification in \cite{fol:explicit} of an explicit fundamental solution for the sub-Laplacian in $\Heis^n$ was a highly influential development. Denoting by
\begin{equation}\label{eq:XY}
X_1,\ldots,X_n,Y_1,\ldots,Y_n
\end{equation}
an orthonormal basis for the first layer $\fv_1$ of the stratified Lie algebra $\fh_n = \fv_1 \oplus \fv_2$ of $\Heis^n$, the sub-Laplacian is the second order subelliptic operator
\begin{equation}\label{eq:L}
\cL = \sum_{i=1}^n X_i^2 + Y_i^2.
\end{equation}
This operator plays a role in analysis on the Heisenberg group of similar importance to that played by the usual Laplacian in Euclidean space. Let us denote by $T$ a basis element for the second layer $\fv_2$ of the Lie algebra, and let us recall that the nontrivial bracket relations among the elements $X_i,\ldots,X_n,Y_1,\ldots,Y_n,T$ are $[X_i,Y_i]=T$. We introduce coordinates $(x_1,\ldots,x_n,y_1,\ldots,y_n,t)$ in $\Heis^n$ by identifying the group with its Lie algebra via the exponential map and reading coordinates in the aforementioned basis. Denote by $z = (x_1,\ldots,x_n,y_1,\ldots,y_n)$ the point in $\R^{2n}$ corresponding to the horizontal basis vectors \eqref{eq:XY}. With this notation in place, Folland's result reads as follows.

\begin{theorem}[Folland]\label{th:folland}
For each $n\ge 1$, there exists a constant $c(n)$ so that
\begin{equation}\label{eq:folland-solution}
u(z,t) = c(n) (||z||^4 + 16 t^2)^{-n/2}
\end{equation}
is a fundamental solution for the sub-Laplacian $\cL$ on $\Heis^n$.
\end{theorem}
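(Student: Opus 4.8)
The plan is the classical one (following Folland): realize $\Heis^n$ concretely, verify by a direct computation that $u$ is $\cL$-harmonic away from the origin, and then pin down the distribution $\cL u$ at the origin using dilation invariance and hypoellipticity. First I would take coordinates $(z,t)\in\R^{2n}\times\R$ with left-invariant fields $X_i = \partial_{x_i} - \tfrac12 y_i\partial_t$, $Y_i = \partial_{y_i} + \tfrac12 x_i\partial_t$, so that $[X_i,Y_i] = \partial_t$, these generate $\fh_n$, and $\cL = \sum_i(X_i^2+Y_i^2)$ is hypoelliptic by H\"ormander's theorem. Writing $\rho(z,t) = \norm z^4 + 16t^2$, so that $u = c(n)\rho^{-n/2}$, the computational heart is the identity, valid wherever $\rho>0$,
\[
\cL(\rho^\beta) \;=\; 8\beta\,(2\beta+n)\,\norm z^2\,\rho^{\beta-1}\qquad(\beta\in\R),
\]
obtained from $X_i^2(\rho^\beta) = \beta(\beta-1)\rho^{\beta-2}(X_i\rho)^2 + \beta\rho^{\beta-1}X_i^2\rho$ (and the analogous formula for $Y_i$) together with the two elementary sums $\sum_i[(X_i\rho)^2+(Y_i\rho)^2] = 16\norm z^2\rho$ and $\sum_i[X_i^2\rho+Y_i^2\rho] = 8(n+2)\norm z^2$. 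Taking $\beta=-n/2$ kills the factor $2\beta+n$, so $u$ is $\cL$-harmonic on $\Heis^n\setminus\{0\}$; since the $X_i,Y_i$ are divergence free, $\cL$ is formally self-adjoint with respect to Lebesgue measure, and hence $\cL u = 0$ in $\mathcal D'(\Heis^n\setminus\{0\})$.

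Next I would analyze $\cL u$ near the origin. Because $u$ is homogeneous of degree $2-Q = -2n$ with respect to the dilations $\delta_\lambda(z,t) = (\lambda z,\lambda^2 t)$ and $2-Q>-Q$, the function $u$ lies in $L^1_{\loc}(\Heis^n)$ and so defines a distribution, and by the previous step $\operatorname{supp}\cL u\subseteq\{0\}$; hence $\cL u = \sum_{|\alpha|\le m}c_\alpha\partial^\alpha\delta_0$ for finitely many constants. Using $\cL\circ\delta_\lambda^* = \lambda^{-2}\delta_\lambda^*\circ\cL$ and $u\circ\delta_\lambda = \lambda^{2-Q}u$, the distribution $\cL u$ is homogeneous of degree $-Q$, whereas $\partial^\alpha\delta_0$ is homogeneous of degree $-Q-d(\alpha)$ with $d(\alpha)\ge 1$ for $\alpha\ne 0$ (weighting the $t$-variable by $2$); comparing degrees, only the $\alpha=0$ term survives, so $\cL u = \gamma_n\,\delta_0$ for a single constant $\gamma_n$ depending linearly on $c(n)$. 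Finally $\gamma_n\ne 0$ whenever $c(n)\ne 0$: otherwise $\cL u = 0$ on all of $\Heis^n$, so by hypoellipticity $u$ would agree a.e.\ with a $C^\infty$ function, contradicting the fact that the continuous function $c(n)\rho^{-n/2}$ blows up at the origin. It then suffices to choose $c(n)$ equal to the reciprocal of the value of $\gamma_n$ obtained for $c(n)=1$, which gives $\cL u = \delta_0$ and completes the proof.

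I expect the only genuine obstacle to be the identity $\sum_i[(X_i\rho)^2+(Y_i\rho)^2] = 16\norm z^2\rho$: one must recognize that $\norm z^4+16t^2$ is the correct gauge, i.e.\ the precise combination for which the cross terms $x_iy_it$ cancel in pairs and the horizontal gradient squared collapses to a multiple of $\norm z^2\rho$. This cancellation is the special structural feature of $\Heis^n$ --- exactly the ``polarizability'' phenomenon the rest of the paper quantifies for general step two groups --- and everything else (the chain-rule bookkeeping, the support and homogeneity arguments, and the nonvanishing of $\gamma_n$) is routine once it is in hand. As an alternative to the hypoellipticity argument for $\gamma_n\ne 0$, one can instead compute $\gamma_n$ as the flux $-\int_{\{\rho=1\}}\langle\nabla_0 u,\nu\rangle\,d\sigma$ by applying the Euclidean divergence theorem to the vector field $u\nabla_0\varphi - \varphi\nabla_0 u$ on $\{\rho>\eps\}$ and letting $\eps\to 0$; this route has the advantage of also producing the explicit value of $c(n)$.
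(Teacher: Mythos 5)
Your proposal is correct, but note that the paper does not prove this statement at all: Theorem \ref{th:folland} is quoted as background from Folland's original paper \cite{fol:explicit}, so there is no in-paper argument to compare against. Your route is the standard one and all the key computations check out: with $X_i=\partial_{x_i}-\tfrac12 y_i\partial_t$, $Y_i=\partial_{y_i}+\tfrac12 x_i\partial_t$ one indeed gets $\sum_i[(X_i\rho)^2+(Y_i\rho)^2]=16\norm{z}^2\rho$ (the $x_iy_it$ cross terms cancel within each pair) and $\sum_i[X_i^2\rho+Y_i^2\rho]=8(n+2)\norm{z}^2$, giving $\cL(\rho^\beta)=8\beta(2\beta+n)\norm{z}^2\rho^{\beta-1}$, which vanishes at $\beta=-n/2$; the homogeneity count ($u$ of degree $2-Q=-2n>-Q$, hence locally integrable, and $\cL u$ of degree $-Q$, so only the $\delta_0$ term survives among distributions supported at the origin) and the hypoellipticity argument for $\gamma_n\ne 0$ are both sound. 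The one point worth being careful about is the sign: since $\cL=\sum_i(X_i^2+Y_i^2)$ is negative semidefinite, the flux computation yields $\gamma_n<0$ for $c(n)=1$, so the normalizing constant $c(n)$ is negative under the convention $\cL u=\delta_0$ (or positive under $-\cL u=\delta_0$); the theorem as stated leaves the sign to $c(n)$, so this is consistent, but it is worth flagging if you carry out the flux integral explicitly.
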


Sub-Riemannian analysis in the Heisenberg groups is intimately related to large-scale and asymptotic geometry in complex hyperbolic space. This story extends to arbitrary rank one symmetric spaces of noncompact type, which encompasses hyperbolic geometry over the real, complex and quaternionic numbers as well as (in a single exceptional case) over the octonions. With the exception of real hyperbolic space, the asymptotic geometry of these spaces is modeled on sub-Riemannian analysis in a collection of step two sub-Riemannian stratified Lie groups, the {Iwasawa groups}. The class of Iwasawa groups, which includes the Heisenberg groups, maintains many nice features of sub-Riemannian Heisenberg analysis. Strictly larger still is the class of Heisenberg-type groups (for short, H-type groups), introduced by Kaplan \cite{kap:h-type}. The H-type condition involves certain algebraic identities on the level of the Lie algebra, formulated in terms of the {J operator}. Recall that the J operator acts on elements of the second layer $\fv_2$ of the Lie algebra and returns endomorphisms of the first layer $\fv_1$. This operator is uniquely characterized by the identity
$$
\langle \boldJ_T(U),V \rangle_h = \langle T,[U,V] \rangle_v \qquad \forall \, U,V \in \fv_1, \, T \in \fv_2 \, .
$$
Here we have fixed an inner product $g = g_h \oplus g_v$ on $\fg$ which makes the two layers $\fv_1$ and $\fv_2$ orthogonal, equivalently, a left-invariant taming Riemannian metric adapted to the underlying sub-Riemannian structure. We emphasize that the J operator depends on the choice of this taming Riemannian metric, and we write $\boldJ = \boldJ^{[g_v]}$ when necessary to stress this point.

\begin{definition}[Kaplan]
A step two stratified Lie group $\G$ equipped with a taming Riemannian metric $g = g_h \oplus g_v$ is of {\bf Heisenberg type} (or {\bf H-type}) if $\boldJ_T$ is an orthogonal transformation of $(\fv_1,g_h)$ whenever $T \in \fv_2$ satisfies $||T||_v = 1$.
\end{definition}

In view of skew-symmetry and the linearity properties of the J operator, it is easy to see that $\G$ is an H-type group if and only if
\begin{equation}\label{eq:Htypecondition}
\boldJ_T^2 = - ||T||_v^2 \, \Id
\end{equation}
for all $T \in \fv_2$, where $\Id$ denotes the identity transformation acting on $\fv_1$.

A important distinguishing feature of H-type groups is the existence therein of an explicit formula for the fundamental solution of the canonical sub-Laplacian, analogous to \eqref{eq:folland-solution}. Indeed, denoting by $X_1,\ldots,X_m$ a $g_h$-orthonormal basis for the first layer $\fv_1$ and by $T_1,\ldots,T_{m_2}$ a $g_v$-orthonormal basis for the second layer $\fv_2$, and introducing exponential coordinates $(\boldx,\boldt) = (x_1,\ldots,x_m,t_1,\ldots,t_{m_2})$ in $\G$, an exact analog of Theorem \ref{th:folland} holds. 

\begin{theorem}[Kaplan, \cite{kap:h-type}]\label{th:kaplan-theorem}
In any H-type group $\G$, the function
\begin{equation}\label{eq:kaplan-theorem}
u(\boldx,\boldt) = (||\boldx||_h^4 + 16 ||\boldt||_v^2)^{(2-Q)/4}
\end{equation}
is (up to a constant multiple) a fundamental solution for the sub-Laplacian
$$
\cL = X_1^2 + \cdots + X_m^2
$$
\end{theorem}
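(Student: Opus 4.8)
The plan is to verify by direct computation that $u = N^{2-Q}$, with $N = (\|\boldx\|_h^4 + 16\|\boldt\|_v^2)^{1/4}$, satisfies $\cL u \equiv 0$ pointwise on $\G\setminus\{0\}$, and then to upgrade this to the distributional identity $\cL u = c\,\delta_0$ with $c\ne 0$, which is exactly the assertion (up to the constant) of \eqref{eq:kaplan-theorem}. To begin I would record the horizontal left-invariant vector fields in exponential coordinates $(\boldx,\boldt)=(x_1,\dots,x_m,t_1,\dots,t_{m_2})$: for a step two group the Baker--Campbell--Hausdorff formula gives
\begin{equation*}
X_i = \partial_{x_i} + \tfrac{1}{2}\sum_{j,k} \langle \boldJ_{T_k} X_i, X_j\rangle_h\, x_j\, \partial_{t_k},
\end{equation*}
so that, writing $J^k$ for the (skew-symmetric) matrix of $\boldJ_{T_k}$ in the basis $X_1,\dots,X_m$, the sub-Laplacian $\cL = \sum_i X_i^2$ splits into the flat piece $\sum_i \partial_{x_i}^2$ together with first-order terms in $\partial_{\boldt}$ and a second-order term quadratic in $\boldx$, all of the non-Euclidean data being carried by the $J^k$.

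The computational heart of the argument is the following pair of identities for $f := N^4 = \|\boldx\|_h^4 + 16\|\boldt\|_v^2$, which I would establish first:
\begin{equation*}
\|\nabla_0 f\|_h^2 = 16\,\|\boldx\|_h^2\, f, \qquad \cL f = 4(Q+2)\,\|\boldx\|_h^2 .
\end{equation*}
For the first, one computes $X_i f = 4x_i\|\boldx\|_h^2 + 16\sum_k t_k (J^k\boldx)_i$; squaring and summing over $i$, the cross terms drop out by skew-symmetry (so that $\langle\boldx, J^k\boldx\rangle_h = 0$), leaving $16\|\boldx\|_h^6 + 256\sum_{k,l} t_k t_l \langle J^k\boldx, J^l\boldx\rangle_h$. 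Here the H-type condition enters decisively: polarizing \eqref{eq:Htypecondition} yields the Clifford relation $J^k J^l + J^l J^k = -2\delta_{kl}\,\Id$, which collapses the last sum to $\|\boldt\|_v^2\|\boldx\|_h^2$ and produces the claimed formula. For $\cL f$ one expands $\sum_i X_i(X_i f)$: the $\partial_{\boldx}$-part acting on $\|\boldx\|_h^4$ contributes $4(m+2)\|\boldx\|_h^2$, while the remaining terms sum to $8\sum_k\|J^k\boldx\|_h^2 = 8 m_2\|\boldx\|_h^2$ using orthogonality of each $J^k$; adding these and recalling $Q = m + 2m_2$ gives $\cL f = 4(Q+2)\|\boldx\|_h^2$. (As a byproduct, $\nabla_0 N = \tfrac14 f^{-3/4}\nabla_0 f$ yields $\|\nabla_0 N\|_h^2 = \|\boldx\|_h^2/N^2$.)

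With these in hand the conclusion on $\G\setminus\{0\}$ is immediate from the chain rule $\cL(f^\gamma) = \gamma f^{\gamma-1}\cL f + \gamma(\gamma-1) f^{\gamma-2}\|\nabla_0 f\|_h^2$:
\begin{equation*}
\cL(f^\gamma) = \gamma\,\|\boldx\|_h^2\, f^{\gamma-1}\bigl(4(Q+2) + 16(\gamma-1)\bigr),
\end{equation*}
which vanishes identically precisely when $\gamma = (2-Q)/4$, i.e. for $u = f^{(2-Q)/4} = N^{2-Q}$. Thus $\cL u = 0$ pointwise away from the origin.

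It remains to see that $u$ is, up to a nonzero constant, a fundamental solution. Since $N$ is homogeneous of degree $1$ under the anisotropic dilations $\delta_\lambda(\boldx,\boldt) = (\lambda\boldx,\lambda^2\boldt)$ and $2-Q < 0$, a scaling estimate shows $u \in L^1_{\mathrm{loc}}(\G)$, so $u$ defines a distribution whose sub-Laplacian is, by the previous step, supported at $\{0\}$. Because $\cL$ is homogeneous of degree $-2$ and $u$ of degree $2-Q$, $\cL u$ is a homogeneous distribution of degree $-Q$ supported at a single point, hence a constant multiple $-c\,\delta_0$ of the Dirac mass (derivatives of $\delta_0$ carry strictly more negative homogeneity). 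Finally $c\ne 0$: were $c=0$, then $u$ would be a distributional solution of $\cL u = 0$ on all of $\G$, hence smooth by hypoellipticity of $\cL$, contradicting $u(\ttg)\to\infty$ as $\ttg\to 0$; so $-c^{-1}u$ is a fundamental solution. I expect the only genuinely delicate point to be this last transition from pointwise to distributional harmonicity — in particular, justifying that $\cL u$ computed as a distribution really is supported at the origin (an integration-by-parts argument on the domains $\{N>\epsilon\}$, controlling the boundary flux as $\epsilon\to 0$ via $\|\nabla_0 N\|_h^2 = \|\boldx\|_h^2/N^2$) — whereas the algebraic core is entirely driven by the Clifford identity coming from \eqref{eq:Htypecondition}; its failure in a general step two group is presumably precisely what the H-type deviation $\delta(\G)$ is designed to measure.
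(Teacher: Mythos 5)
Your proof is correct. The paper itself does not prove this statement --- it is quoted as Kaplan's theorem from \cite{kap:h-type} --- but the identities at the heart of your argument, namely $\|\nabla_0 f\|_h^2 = 16\|\boldx\|_h^2 f$ and $\cL f = 4(Q+2)\|\boldx\|_h^2$ for $f = N^4$, are exactly the H-type specializations of the machinery the paper develops in Section \ref{sec:Htype} (Lemmas \ref{lem:0}--\ref{lem:4} and the computation of $\cL a_v$ in the proof of Proposition \ref{prop:H-type-stability-2}, where the error terms $\langle(\boldJ_{\varepsilon_q}^2+\Id)\boldx,\boldx\rangle_h$ that you kill via the Clifford relation are retained and measured by $\delta(\G)$). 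The only cosmetic discrepancy is the sign in your formula for $X_i$, which is opposite to the paper's convention $X_i = \partial_{x_i} - \tfrac12\sum_{q,j} b_{ij}^q x_j \partial_{t_q}$; this amounts to replacing $\boldJ$ by $-\boldJ$ and affects none of the quadratic quantities, and your passage from pointwise to distributional harmonicity via homogeneity and hypoellipticity is the standard and correct completion.
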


Here $Q = m + 2m_2$ denotes the homogeneous dimension of $\G$. 

The proof of Theorem \ref{th:kaplan-theorem} relies on a suite of explicit algebraic identities which hold in H-type groups. These algebraic identities lie at the heart of many other explicit formulas in, and distinguishing features of, H-type groups. For instance, Capogna--Danielli--Garofalo \cite{cdg:carnot} showed that, for each $1<p\le\infty$, a suitable power of the quantity
$$
N(\boldx,\boldt) = (||\boldx||_h^4 + 16 ||\boldt||_v^2)^{1/4}
$$
is, up to a constant multiple, the fundamental solution $u_p$ for the nonlinear $p$-Laplacian $\cL_p$. (See also Heinonen--Holopainen \cite{hh:carnot} for the case $p=Q$, where the power is replaced by a logarithm.) A key intermediate step in this derivation is to show that the horizontal gradient of $N$ satisfies
\begin{equation}\label{eq:norm-horiz-grad}
||\nabla_0 N(\boldx,\boldt)||_h^2 = \frac{||\boldx||_h^2}{N_v(\boldx,\boldt)^2}.
\end{equation}
This identity is an analog of the {\it eikonal equation} $|\nabla n| \equiv 1$ for the usual Euclidean norm $n(\boldx) = |\boldx|$ in $\R^n$. In \cite{gv:yamabe} it is shown that in each H-type group $\G$ and for each $\epsilon>0$, a suitable multiple of the function
$$
v(\boldx,\boldt) = ((\epsilon^2 + ||\boldx||_h^2)^2 + 16 ||\boldt||_v^2)^{(2-Q)/4}
$$
is a positive solution to the Yamabe equation
$$
-\cL v = v^{(Q+2)/(Q-2)}
$$
defined in all of $\G$. Finally, in \cite{gt:h-type} it is shown in all H-type groups that nonnegative finite linear combinations of translates of the aforementioned $p$-Laplacian fundamental solutions $u_p$ satisfy a family of monotonicity conclusions, e.g., are either $p$-subsolutions or $p$-supersolutions depending on the value of $p$ relative to the homogeneous dimension $Q$. This fact extends a remarkable superposition formula in Euclidean space due to Crandall--Zhang \cite{cz:another}; see also Lindqvist--Manfredi \cite{lm:remarkable} and Brustad \cite{bru:superposition1}, \cite{bru:superposition2}.

Are there any other sub-Riemannian Carnot groups (of step at least two) besides the H-type groups where a similar array of explicit formulas hold true? It is a well known result of Folland \cite{fol:subelliptic} that in any stratified Lie group $\G$ (of any step) there always exists a fundamental solution $u$ for the sub-Laplacian $\cL = X_1^2 + \cdots + X_m^2$, moreover, $u$ is homogeneous of order $2-Q$ with respect to the standard dilations of $\G$. Here, as before, $Q$ denotes the homogeneous dimension of $\G$.
The paper \cite{bt:polar} introduced the class of {\bf polarizable Carnot groups}. These groups were defined by the assumption that the quasinorm 
$$
N = u^{1/(2-Q)},
$$
associated to Folland's fundamental solution $u$ of the sub-Laplacian $\cL$, is annihilated by the infinity-Laplacian $\cL_\infty$ away from the identity element $0$. It easily follows from this assumption that a full one-parameter family of functions $u_p$, obtained as suitable powers or as the logarithm of the quasinorm $N$, continue to serve as fundamental solutions for the $p$-Laplacians. 

The term {\it polarizable} originates from a further geometric property of such groups. Namely, any polarizable Carnot group $\G$ admits a family of half-infinite horizontal curves $\gamma_\tta:[0,\infty) \to \G$ with $\gamma_\tta(0) = 0$ and $\gamma_\tta(1) = \tta$, which fill out a subset of $\G$ of full Haar measure, so that the following {\it horizontal polar coordinate integration formula} holds true:
\begin{equation}\label{eq:polar}
\int_{\G} f(\ttg) \, d\ttg = \int_0^\infty \int_S f(\gamma_\tta(s)) s^{Q-1} \, d\sigma(\tta) \, ds \qquad \mbox{for all $f \in L^1(\G)$.}
\end{equation}
Here $S = \{ \tta \in \G \, : \, N(\tta) = 1\}$ denotes the $N$-unit sphere and $\sigma$ denotes a specific Radon measure supported on $S$. Other polar coordinate integration formulas are known to hold in stratified Lie groups (e.g.\ \cite[Proposition 1.15]{fs:hardy}), but in contrast with those formulas \eqref{eq:polar} uses {\it horizontal} radial curves $\gamma_\tta$. This feature in turn enables the explicit computation of moduli of ring domains and other notions in geometric mapping theory. Recently (\cite{tys:polar2}), we have shown that the validity of such a horizontal polar coordinate integration formula is not just a consequence of the polarizability condition, but is in fact equivalent to it. Thus the validity of \eqref{eq:polar} for a suitable family of horizontal curves and a suitable Radon measure on the parametrizing sphere $N$ can be taken as a definition for polarizability of a Carnot group. We note that Kor\'anyi and Reimann \cite{kr:rings} were the first to construct a horizontal polar coordinate integration formula in the Heisenberg group $\Heis^1$.

In this paper, we return to the question of whether these well-known explicit analytic formulas in H-type groups extend to other settings. We are also motivated by a question posed in \cite{bt:polar}, namely, whether all polarizable Carnot groups are of Heisenberg type. We introduce here a strategy which may be useful to answer this question in the setting of step two Carnot groups. To wit, we introduce a numerical measurement which quantifies the degree to which a given step two Carnot group $\G$ deviates from the subclass of H-type groups. More precisely (since a step two Carnot group is defined only in terms of a metric $g_h$ on the horizontal layer $\fv_1$ of the Lie algebra, while the H-type condition requires a full Riemannian metric $g_h \oplus g_v$ on the stratified Lie algebra $\fg = \fv_1 \oplus \fv_2$), our numerical measurement quantifies the deviation of $\G$ from the class of {\it nascent H-type groups}. A step two Carnot group $\G$, with horizontal metric $g_h$ defined in the first layer $\fv_1$, is said to be a {\bf nascent H-type group} if there exists a vertical metric $g_v$ defined in the second layer $\fv_2$ so that $\G$, equipped with the Riemannian metric $g_h \oplus g_v$, is an H-type group.

The {\bf H-type deviation} of $\G$ is defined to be
$$
\delta(\G) = \frac1{\sqrt{\dim\fv_1}} \, \inf_{g_v} \sup_{\substack{T \in \fv_2 \\ ||T||_v = 1}} ||\boldJ_T^2 + \Id||_{HS}.
$$
Here $||\cdot||_{HS}$ denotes the Hilbert--Schmidt matrix norm, and the infimum is taken over all vertical metrics $g_v$ defined in $\fv_2$. The factor $(\dim\fv_1)^{-1/2}$ is a normalization factor which takes into account the overall size of the group. 
We shall show (see Proposition \ref{prop:delta_0-characterization-of-nascent-H-type-groups}) that $\delta(\G) = 0$ if and only if $\G$ is a nascent H-type group.

We also compute the H-type deviation for several naturally occuring families of step two Carnot groups. For instance, we compute this value for all anisotropic Heisenberg groups (and more generally, for all generalized Heisenberg groups in the sense of \cite{br:generalized-H-type}) and we also compute its value for step two free Carnot groups of arbitrary rank. See subsection \ref{sec:examples} for details.

The main results of this paper point towards the use of this concept to resolve the polarizability conjecture. We show that some previously mentioned analytic properties of Kaplan's quasinorm 
\begin{equation}\label{eq:Kaplan-N}
N_v(\boldx,\boldt) = (||\boldx||_h^4 + 16 ||\boldt||_v^2)^{1/4}
\end{equation}
characterize H-type groups. More precisely, we prove the following two theorems, cf.\ \eqref{eq:norm-horiz-grad} and Theorem \ref{th:folland}.

\begin{theorem}\label{thm:main-theorem-1}
Let $\G$ be a step two sub-Riemannian Carnot group, with metric $g_h$ defined in the horizontal layer $\fv_1$. Assume that there exists a metric $g_v$ defined in the vertical layer $\fv_2$ so that
$$
||(\nabla_0 N_v)(\ttg)||_h^2 = \frac{||\boldx||_h^2}{N_v(\ttg)^2} \quad \forall \ttg = \exp(\boldx + \boldt) \in \G, \ttg \ne 0 \, .
$$
where $N_v$ is as in \eqref{eq:Kaplan-N}. Then $\G$ is an H-type group.
\end{theorem}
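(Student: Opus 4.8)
The plan is to compute the horizontal gradient of $N_v$ explicitly in exponential coordinates, to show that the stated hypothesis is equivalent to the pointwise algebraic identity $\|\boldJ_\boldt\boldx\|_h^2=\|\boldx\|_h^2\,\|\boldt\|_v^2$ for all $\boldx\in\fv_1$ and $\boldt\in\fv_2$, and then to observe that this identity is precisely the H-type condition \eqref{eq:Htypecondition}. Concretely, I would first fix a $g_h$-orthonormal basis $X_1,\dots,X_m$ of $\fv_1$ and a $g_v$-orthonormal basis $T_1,\dots,T_{m_2}$ of $\fv_2$, and pass to the associated exponential coordinates $(\boldx,\boldt)$, so that $\|\boldx\|_h^2=\sum_i x_i^2$, $\|\boldt\|_v^2=\sum_k t_k^2$, and $N_v^4=\rho^2+16\|\boldt\|_v^2$ with $\rho:=\|\boldx\|_h^2$. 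By the Baker--Campbell--Hausdorff formula in the step two setting, the group law reads $(\boldx,\boldt)\cdot(\boldx',\boldt')=(\boldx+\boldx',\,\boldt+\boldt'+\tfrac12\omega(\boldx,\boldx'))$, where the $k$-th component of $\omega(\boldx,\boldx')$ is $\langle[\boldx,\boldx'],T_k\rangle_v=\langle\boldJ_{T_k}\boldx,\boldx'\rangle_h$ by the defining property of the $J$ operator; hence the left-invariant horizontal fields are $X_i=\partial_{x_i}+\tfrac12\sum_k(\boldJ_{T_k}\boldx)_i\,\partial_{t_k}$, where $(\,\cdot\,)_i$ denotes the $i$-th coordinate in the basis $X_1,\dots,X_m$ (the opposite BCH sign convention merely flips the sign of the second term, which is immaterial below).

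Next I would carry out the gradient computation. Writing $\boldJ_\boldt:=\sum_k t_k\boldJ_{T_k}$, which equals $\boldJ$ evaluated at $\boldt=\sum_k t_k T_k$ by linearity of $T\mapsto\boldJ_T$, one obtains $4N_v^3\,X_i N_v=X_i(N_v^4)=X_i(\rho^2)+16\,X_i(\|\boldt\|_v^2)=4\rho x_i+16(\boldJ_\boldt\boldx)_i$, since $X_i\rho=2x_i$ and $X_i(\|\boldt\|_v^2)=\sum_k 2t_k\cdot\tfrac12(\boldJ_{T_k}\boldx)_i=(\boldJ_\boldt\boldx)_i$. Therefore $\|\nabla_0 N_v\|_h^2=\sum_i(X_iN_v)^2=N_v^{-6}\sum_i\bigl(\rho x_i+4(\boldJ_\boldt\boldx)_i\bigr)^2$. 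Expanding the square, the cross term equals $8\rho\,\langle\boldx,\boldJ_\boldt\boldx\rangle_h$, which vanishes because $\boldJ_\boldt$ is skew-symmetric, and the surviving two terms give $\|\nabla_0N_v\|_h^2=N_v^{-6}\bigl(\rho^3+16\|\boldJ_\boldt\boldx\|_h^2\bigr)$, using $\rho^2\|\boldx\|_h^2=\rho^3$. On the other hand $\|\boldx\|_h^2/N_v^2=\rho N_v^4/N_v^6=N_v^{-6}\bigl(\rho^3+16\rho\|\boldt\|_v^2\bigr)$. The hypothesis asserts that these two expressions coincide at every $\ttg\neq0$; cancelling, this forces $\|\boldJ_\boldt\boldx\|_h^2=\rho\,\|\boldt\|_v^2=\|\boldx\|_h^2\,\|\boldt\|_v^2$ at every point with $\boldx\neq0$, and at $\boldx=0$ the identity is trivially true, so it holds for all $\boldx\in\fv_1$ and $\boldt\in\fv_2$.

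Finally, specializing to $\boldt=T$ with $\|T\|_v=1$ yields $\|\boldJ_T\boldx\|_h^2=\|\boldx\|_h^2$, i.e.\ $\langle(\boldJ_T^2+\Id)\boldx,\boldx\rangle_h=0$ for all $\boldx$ (again using skew-symmetry of $\boldJ_T$). Since $\boldJ_T^2+\Id$ is symmetric and has identically vanishing quadratic form, it is zero; rescaling gives $\boldJ_T^2=-\|T\|_v^2\,\Id$ for all $T\in\fv_2$, which is exactly \eqref{eq:Htypecondition}, so $(\G,g_h\oplus g_v)$ is an H-type group. I do not expect a serious obstacle: the delicate points are (i) pinning down the coordinate formula for the $X_i$ with the correct normalization, so that the constant $16$ and the term $\|\boldt\|_v^2$ in $N_v^4$ align exactly with the output of the gradient computation, and (ii) noting that the hypothesis, although only assumed on $\G\setminus\{0\}$, already encodes the full algebraic identity because the excluded locus $\{\boldx=0\}$ carries no information. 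The only place where skew-symmetry of $\boldJ$ is used is the vanishing of the cross term, and this is precisely what collapses the eikonal-type identity to the clean quadratic-form condition characterizing H-type groups.
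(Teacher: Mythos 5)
Your proof is correct. The computational core coincides with the paper's: your formula $\|\nabla_0 N_v\|_h^2 = N_v^{-6}\bigl(\|\boldx\|_h^6 + 16\|\boldJ_{\boldt}\boldx\|_h^2\bigr)$ is exactly Lemma \ref{lem:4}, and the resulting identity $\tfrac{\|\boldx\|_h^2}{N_v^2} - \|\nabla_0 N_v\|_h^2 = 16 N_v^{-6}\langle(\boldJ_{\boldt}^2 + \|\boldt\|_v^2\,\Id)\boldx,\boldx\rangle_h$ is the one displayed in the proof of Proposition \ref{prop:H-type-stability-1}. Where you diverge is the endgame. The paper routes the conclusion through the quantitative stability estimate of Proposition \ref{prop:H-type-stability-1a} (choosing $\boldx_\boldt$ to be a maximal eigenvector of $\boldJ_\boldt^2+\Id$ of norm $L$, optimizing over $L$ to get $\delta(\G,g_v)\le C\sup|\cdots|$) and then invokes the characterization $\delta(\G)=0 \Leftrightarrow$ nascent H-type from Proposition \ref{prop:delta_0-characterization-of-nascent-H-type-groups}, which itself rests on a compactness argument over sequences of vertical metrics. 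You instead read off the pointwise identity $\|\boldJ_\boldt\boldx\|_h^2 = \|\boldx\|_h^2\|\boldt\|_v^2$ directly and finish with the elementary observation that a symmetric operator with identically vanishing quadratic form is zero; this recovers condition \eqref{eq:h-type-characterization-2}, which the paper already notes is equivalent to the H-type condition. Your route is shorter and self-contained for the qualitative statement, and it has the mild advantage of showing directly that the \emph{given} $g_v$ endows $\G$ with an H-type structure (the paper's stated two-line proof of the theorem passes through $\delta(\G)=0$, which a priori only yields \emph{some} vertical metric, although the fixed-metric estimate \eqref{eq:H-type-stability-1a-equation} in the paper also delivers this). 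What the paper's longer route buys is the two-sided quantitative comparison with $\delta(\G)$, which is the real point of Section \ref{sec:Htype} and is needed for the stability program around Conjecture \ref{conj:polarizability-conjecture}. One trivial remark: the cancellation step does not require $\boldx\ne 0$, only $\ttg\ne 0$, so your aside about the locus $\{\boldx=0\}$ is unnecessary but harmless.
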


\begin{theorem}\label{thm:main-theorem-2}
Let $\G$ be as in Theorem \ref{thm:main-theorem-1}. Assume there is a metric $g_v$ defined in $\fv_2$ so that
$$
\cL(N_v^{2-Q})(\ttg) = 0 \quad \forall 0 \ne \ttg \in \G,
$$
where $N_v$ is as in \eqref{eq:Kaplan-N}. Then $\G$ is an H-type group.
\end{theorem}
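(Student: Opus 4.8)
The plan is to run the computation behind Theorem \ref{th:kaplan-theorem} in reverse: compute $\cL(N_v^{2-Q})$ explicitly in terms of the operator $\boldJ$, and observe that the hypothesis forces a polynomial identity whose homogeneous components are precisely the H-type identity \eqref{eq:Htypecondition}. Fix the vertical metric $g_v$ provided by the hypothesis, a $g_h$-orthonormal basis $X_1,\dots,X_m$ of $\fv_1$ and a $g_v$-orthonormal basis $T_1,\dots,T_{m_2}$ of $\fv_2$, where $m=\dim\fv_1$ and $m_2=\dim\fv_2$, and the associated exponential coordinates $(\boldx,\boldt)\in\R^m\times\R^{m_2}$. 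With $b^k_{ij}=\langle[X_i,X_j],T_k\rangle_v=\langle\boldJ_{T_k}X_i,X_j\rangle_h$ the structure constants, the Baker--Campbell--Hausdorff formula gives $X_i=\partial_{x_i}+\tfrac12\sum_{j,k}b^k_{ij}x_j\,\partial_{t_k}$. In these coordinates $N_v^4=w$, where $w:=||\boldx||_h^4+16||\boldt||_v^2$ is a polynomial, smooth and strictly positive on $\G\setminus\{0\}$; hence $N_v^{2-Q}=w^{(2-Q)/4}$ is smooth there, and the hypothesis is exactly that $\cL(w^{(2-Q)/4})=0$ on $\G\setminus\{0\}$.

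Next I would carry out two computations. Writing $\boldJ_\boldt:=\sum_k t_k\boldJ_{T_k}$ and using that each $\boldJ_{T_k}$ is skew-adjoint on $(\fv_1,g_h)$ — which in particular kills the cross term $\langle\boldx,\boldJ_\boldt\boldx\rangle_h$ — a direct calculation from the vector field formula yields
\[ ||\nabla_0 w||_h^2 = 16\,||\boldx||_h^6 + 256\,||\boldJ_\boldt\boldx||_h^2, \qquad \cL w = 4(m+2)\,||\boldx||_h^2 + 8\sum_{k=1}^{m_2}||\boldJ_{T_k}\boldx||_h^2 . \]
The chain rule for the sum-of-squares operator $\cL$ applied to $w^{(2-Q)/4}$ then shows that $\cL(N_v^{2-Q})=0$ on $\G\setminus\{0\}$ is equivalent to the scalar identity $4w\,\cL w=(Q+2)\,||\nabla_0 w||_h^2$ on $\G\setminus\{0\}$; since both sides are polynomials in $(\boldx,\boldt)$ that agree off a single point, this is a polynomial identity on all of $\R^m\times\R^{m_2}$.

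Finally I would expand this identity. Each side is the sum of a term of bidegree $(6,0)$ and a term of bidegree $(2,2)$ in $(\boldx,\boldt)$, so it splits into two identities. Setting $\boldt=0$ gives $16(m+2)||\boldx||_h^6+32\bigl(\sum_k||\boldJ_{T_k}\boldx||_h^2\bigr)||\boldx||_h^4=16(Q+2)||\boldx||_h^6$; since $Q-m=2m_2$, this forces $\sum_{k=1}^{m_2}||\boldJ_{T_k}\boldx||_h^2=m_2\,||\boldx||_h^2$ for every $\boldx\in\fv_1$. Subtracting the bidegree $(6,0)$ part from the full identity leaves $(m+2)||\boldt||_v^2||\boldx||_h^2+2\bigl(\sum_k||\boldJ_{T_k}\boldx||_h^2\bigr)||\boldt||_v^2=(Q+2)||\boldJ_\boldt\boldx||_h^2$; inserting the relation just obtained and using $m+2+2m_2=Q+2$ collapses this to $||\boldJ_\boldt\boldx||_h^2=||\boldt||_v^2\,||\boldx||_h^2$ for all $\boldx\in\fv_1$, $\boldt\in\fv_2$. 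Since $\boldJ_T$ is skew-adjoint, $||\boldJ_T U||_h^2=\langle-\boldJ_T^2U,U\rangle_h$, so this says exactly that $\boldJ_T^2=-||T||_v^2\,\Id$ for every $T\in\fv_2$, which is the H-type condition \eqref{eq:Htypecondition}. Hence $\G$ equipped with $g_h\oplus g_v$ is an H-type group.

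I expect no serious obstacle: the argument is Theorem \ref{th:kaplan-theorem} read backwards, and the computation is elementary. The two points demanding care are the sign and normalization bookkeeping in passing from the structure constants $b^k_{ij}$ to the matrices of the $\boldJ_{T_k}$, and the verification that no monomials of bidegree other than $(6,0)$ and $(2,2)$ occur, so that the homogeneity splitting is legitimate. The conceptual crux — the reason the hypothesis yields the full strength of the H-type condition rather than something weaker — is the exact numerical coincidence $Q=m+2m_2$, which makes the coefficients in the bidegree $(2,2)$ identity cancel down to the Clifford-type relation $\boldJ_\boldt^2=-||\boldt||_v^2\,\Id$.
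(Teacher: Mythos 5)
Your proof is correct, and it takes a genuinely different --- and more elementary --- route than the paper's. Both arguments rest on the same two computations (Lemma \ref{lem:4} and the formula for $\cL a_v$ derived in the proof of Proposition \ref{prop:H-type-stability-2}), which express $N_v\,\cL N_v-(Q-1)||\nabla_0 N_v||_h^2$ (equivalently your $4w\,\cL w-(Q+2)||\nabla_0 w||_h^2$) in terms of $\boldJ_{\boldt}^2+||\boldt||_v^2\,\Id$ and $\sum_q(\boldJ_{\varepsilon_q}^2+\Id)$; cf.\ \eqref{eq:NLN-identity}. The divergence is in how the H-type condition is extracted. The paper proves the two-sided quantitative estimate of Proposition \ref{prop:H-type-stability-2}: it bounds $\delta(\G,g_v)$ from above by a multiple of $\sup_{\ttg}|N_v\,\cL N_v-(Q-1)||\nabla_0 N_v||_h^2|$ by evaluating at carefully chosen points (an eigenvector $\boldx_{\boldt}$ of $\boldJ_{\boldt}^2+\Id$ of maximal eigenvalue, at an optimized radius $L_0$), and then concludes via Proposition \ref{prop:delta_0-characterization-of-nascent-H-type-groups}. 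You instead observe that the vanishing hypothesis is an identity between polynomials in $(\boldx,\boldt)$, split it into its bidegree-$(6,0)$ and bidegree-$(2,2)$ components, and use the coincidence $Q=m+2m_2$ to collapse the two resulting identities to $||\boldJ_{\boldt}\boldx||_h^2=||\boldt||_v^2\,||\boldx||_h^2$, which is \eqref{eq:h-type-characterization-2}. Your route avoids the H-type deviation machinery entirely (no eigenvector selection, no optimization over $L$, no appeal to Proposition \ref{prop:delta_0-characterization-of-nascent-H-type-groups}) and exhibits the given $g_v$ directly as the H-type metric; what it gives up is the quantitative stability content of Proposition \ref{prop:H-type-stability-2} and Corollary \ref{cor:H-type-stability-1b}, which is what the paper is really after in view of Conjecture \ref{conj:polarizability-conjecture}. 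Two of your details are worth affirming: the trace identity $\sum_q||\boldJ_{T_q}\boldx||_h^2=m_2||\boldx||_h^2$ from the $(6,0)$ part is a genuine intermediate fact the paper never isolates, and the sign ambiguity in the coordinate expression for the $X_i$ is indeed harmless, since the cross term in $||\nabla_0 w||_h^2$ vanishes by skew-symmetry under either convention.
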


Theorem \ref{thm:main-theorem-1} follows from the aforementioned characterization of nascent H-type groups via the vanishing of the H-type deviation $\delta_0(\G)$, along with the quantitative stability estimate
$$
\frac1C \delta(\G) \le \inf_{g_v} \sup_{0 \ne \ttg \in \G} \Biggl| ||(\nabla_0 N_v)(\ttg)||_h^2 - \frac{||\boldx(\ttg)||_h^2}{N_v(\ttg)^2} \Biggr| \le C \delta(\G)
$$
where the infimum is taken over all vertical metrics $g_v$ and $C>0$ denotes a constant which depends only on the dimensions of the horizontal and vertical layers of $\G$. Similarly, Theorem \ref{thm:main-theorem-2} relies on an analogous quantitative estimate of the form
\begin{equation}\label{eq:main-theorem-2-equation}
\frac1C \delta(\G) \le \inf_{g_v} \sup_{0 \ne \ttg \in \G} N_v(\ttg)^Q (\cL N_v^{2-Q})(\ttg) \le C \delta(\G).
\end{equation}
The quantitative estimate in \eqref{eq:main-theorem-2-equation} provides new insight into Kaplan's theorem. Recall that the sub-Laplacian $\cL$ in a Carnot group $\G$ depends only on horizontal data and the choice of volume measure (Haar measure), but that Kaplan's formula \eqref{eq:kaplan-theorem} for the fundamental solution involves a specific choice of vertical metric. While the H-type condition depends essentially on that choice of vertical metric, it is natural to wonder how such choice arises in the context of the sub-Laplacian. The estimates in \eqref{eq:main-theorem-2-equation} provide a partial answer. Indeed, the characterization of the H-type condition via the vanishing of the H-type deviation, coupled with the estimates in \eqref{eq:main-theorem-2-equation}, illustrate the role of this specific choice of vertical metric in the fundamental solution for $\cL$.

The technique used to prove Theorems \ref{thm:main-theorem-1} and \ref{thm:main-theorem-2} could be employed to prove a variety of similar statements. We have not systematically explored this topic in this paper. However, we conclude this introduction by indicating how this concept, and the techniques used in the proofs of Theorems \ref{thm:main-theorem-1} and \ref{thm:main-theorem-2}, could be used to attack the polarizability conjecture.

In \cite{bgg:step2}, Beals, Gaveau and Greiner provide a semi-explicit representation for the fundamental solution of the sub-Laplacian $\cL$ in any step two Carnot group $\G$. More precisely, they show that the fundamental solution $u(\boldx,\boldt)$ for $\cL$ admits the integral representation
\begin{equation}\label{eq:bgg-formula}
u(\boldx,\boldt) = c(Q) \int_{\fv_2} \frac{V(\tau)}{f(\boldx,\boldt,\tau)^{(Q/2)-1}} \, d\tau
\end{equation}
where the constant $c(Q)>0$ depends only on the homogeneous dimension $Q = m + 2m_2$ of $\G$. Here $\tau$ denotes a parameterizing vector in $\fv_2$, $V(\tau)$ denotes the solution to the {\it generalized transport equation} $\tau \cdot \nabla_\tau V + \tfrac12 (\cL f - m) V = 0$, and $f(\boldx,\boldt,\tau)$ denotes the solution to the {\it generalized Hamilton--Jacobi equation} $\tfrac12 ||\nabla_0 f||_h^2 + \tau \cdot \nabla_\tau f = f$. Explicit expressions for $V(\tau)$ and $f(\boldx,\boldt,\tau)$ involve the matrix-valued parameter 
$$
\boldOmega(\tau) := -\tfrac12 \sqrt{-1} \, \sum_{q=1}^{m_2} \boldB^q \cdot \tau_q,
$$
where the vector $(\tau_q)$ records the coordinates of $\tau$ relative to a suitable basis of $\fv_2$ and $\boldB^q$, $1\le q \le m_2$, are the structure constant matrices for the Lie algebra $\fg$ of $\G$. See \cite{bgg:step2} or \cite{bt:polar} for more details.

In \cite{bt:polar} we explicitly computed the integral in \eqref{eq:bgg-formula} in the case of the anisotropic Heisenberg group $\G = \Heis^2(\tfrac12,1)$. (See Example \ref{ex:heisenberg-example} for the definition.) We then showed that
$$
\cL_\infty \bigl( u^{\tfrac1{2-Q}} \bigr)
$$
does not identically vanish in $\G$, and concluded that the anisotropic Heisenberg group $\Heis^2(\tfrac12,1)$ is not a (nascent) H-type group. Bou Dagher and Zegarli\'nski \cite{bdz:anisotropic} gave an explicit formula for Folland's fundamental solution in the anisotropic Heisenberg groups $\Heis^n(\tfrac12,1,\ldots,1)$. In section \ref{sec:polarizability} we analyze the $\infty$-Laplacian of the Bou Dagher--Zegarli\'nski fundamental solution on $\Heis^n(\tfrac12,1,\ldots,1)$, and compare its behavior to the H-type deviation of this group. Based on these conclusions, we formulate the following conjecture.

\begin{conjecture}\label{conj:polarizability-conjecture}
Let $\G$ be a step two Carnot group with horizontal metric $g_h$. Denote by $\Sigma = \{\ttg = \exp(\boldx + \boldt) \in \G : \boldt = 0 \}$. Let $u$ be the fundamental solution for the sub-Laplacian $\cL$ and let $N = u^{1/(2-Q)}$. Then
$$
\sup_{\ttg \in \Sigma} |N(\ttg)(\cL_\infty N)(\ttg)| \simeq \delta(\G)^2 \, .
$$
\end{conjecture}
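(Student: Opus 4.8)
The plan is to establish the claimed comparison by working with the Beals--Gaveau--Greiner integral representation \eqref{eq:bgg-formula} for $u$, restricted to the horizontal slice $\Sigma = \{\boldt = 0\}$, and extracting the leading-order behavior of the $\infty$-Laplacian $\cL_\infty N$ from a Taylor-type expansion in the H-type deviation parameter. First I would normalize: fix a vertical metric $g_v$ achieving (or nearly achieving) the infimum in the definition of $\delta(\G)$, and write $\boldJ_T^2 = -\Id + \cE_T$, where $\cE_T$ is a symmetric endomorphism of $\fv_1$ with $\|\cE_T\|_{HS} \le \sqrt{\dim\fv_1}\,\delta(\G)$ for all unit $T \in \fv_2$. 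The matrix-valued parameter $\boldOmega(\tau)$ in \eqref{eq:bgg-formula} satisfies $\boldOmega(\tau)^2 = \tfrac14 \sum_{q,q'} \boldB^q \boldB^{q'} \tau_q \tau_{q'}$, which is precisely (a scalar multiple of) the $\boldJ$-operator composed with itself; thus $\boldOmega(\tau)^2 = -\tfrac14\|\tau\|_v^2 \Id + \tfrac14 \cE_{\tau/\|\tau\|_v}\|\tau\|_v^2$. Substituting this into the explicit formulas for $f(\boldx, 0, \tau)$ and $V(\tau)$ (which, for H-type groups, reduce to the closed-form expressions underlying Kaplan's Theorem \ref{th:kaplan-theorem}), I would expand each to first order in $\cE$. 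On $\Sigma$ the leading term reproduces $N_v^{2-Q}$ up to a constant, and the first-order correction is a linear functional of the family $\{\cE_T\}$ with coefficients that are smooth, homogeneous functions of $\boldx$.

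Next I would compute $\cL_\infty N = \sum_{i,j} (X_i N)(X_j N)(X_i X_j N)$ on this expansion. For the H-type leading term this vanishes identically on all of $\G$ (this is the well-known fact that H-type groups are polarizable), so the first nonvanishing contribution comes from cross terms between the zeroth-order and first-order pieces: schematically $\cL_\infty N$ is, to leading order on $\Sigma$, a bilinear expression in $\{\cE_T\}$ — not linear, because the linear-in-$\cE$ term enters $\cL_\infty$ only through products with the zeroth-order gradient, and a separate cancellation (again coming from the H-type identities, specifically the eikonal-type relation \eqref{eq:norm-horiz-grad}) kills the naive linear term. This is why the conjectured comparison is with $\delta(\G)^2$ rather than $\delta(\G)$: one power of $\delta$ comes from the correction and the other from the fact that $\cL_\infty$ of the unperturbed $N$ and its gradient structure are both ``critical.'' I would make this precise by showing that the bilinear form $B(\cE,\cE) := \sup_{\ttg \in \Sigma} N(\ttg)(\cL_\infty N)(\ttg)$ is bounded above by $C\|\cE\|^2$ (an upper bound, via the smoothness and homogeneity of the BGG integrand and dominated convergence to control the $\tau$-integral uniformly on $\Sigma$), and bounded below by $\tfrac1C \|\cE\|^2$ by exhibiting a direction $\ttg \in \Sigma$ — e.g., aligned with an eigenvector of $\cE_T$ for the $T$ realizing the sup in $\delta$ — along which the quadratic term does not degenerate. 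Taking the infimum over $g_v$ on both sides and recalling $\delta(\G) = (\dim\fv_1)^{-1/2}\inf_{g_v}\sup_T\|\cE_T\|_{HS}$ then yields $\sup_\Sigma |N \cL_\infty N| \simeq \delta(\G)^2$.

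The main obstacle I anticipate is controlling the $\tau$-integral in \eqref{eq:bgg-formula} uniformly enough to (i) differentiate under the integral sign three times (to form $\cL_\infty N$), and (ii) justify the first-order expansion in $\cE$ with a remainder that is genuinely $O(\delta^2)$ in the relevant norm, all while staying on the singular slice $\Sigma$ where $\boldt = 0$. On $\Sigma$ the function $f(\boldx, 0, \tau)$ degenerates as $\tau \to \infty$ in a way that is borderline for integrability of $f^{1-Q/2}$ and its derivatives, so the expansion must be carried out carefully — probably by rescaling $\tau \mapsto \|\boldx\|_h^2 \tau$ to make the homogeneity explicit, reducing to a fixed (dimension-dependent) convergent integral whose integrand depends analytically on the entries of $\cE$. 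A secondary difficulty is the lower bound: one must verify that the quadratic form $B(\cE,\cE)$ is not merely nonnegative but uniformly positive definite on the space of admissible deviation tensors $\{\cE_T\}$, which requires ruling out accidental cancellations; here I would fall back on the already-established comparison \eqref{eq:main-theorem-2-equation} for $\cL N_v^{2-Q}$ (not $\cL_\infty$) as a template, and on the explicit computation in \cite{bt:polar} for $\Heis^2(\tfrac12,1)$ and in section \ref{sec:polarizability} for $\Heis^n(\tfrac12,1,\ldots,1)$ as sanity checks that the constant $C$ can be taken uniform across the relevant families.
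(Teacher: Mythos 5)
First, a point of order: the statement you are proving is stated in the paper as a \emph{conjecture}, and the paper does not prove it. What the paper actually establishes is Theorem \ref{th:anisotropic-Heis-groups}, a verification of Conjecture \ref{conj:polarizability-conjecture} for the single family $\Heis^n(\tfrac12,1,\ldots,1)$, and it does so not by perturbing the Beals--Gaveau--Greiner integral but by starting from the closed-form Bou Dagher--Zegarli\'nski fundamental solution \eqref{eq:anisotropic-u}, computing $\cL_\infty(\log u)$ explicitly as a rational function of the quantities $A,B,C$, expanding $N\cL_\infty N/N^4$ as a Laurent series in $n$, and checking that the constant term vanishes exactly on $\Sigma$ while the $1/n$ term is bounded above and below there (matching $\delta(\G)^2\simeq 1/n$). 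Your proposal is therefore not an alternative proof of anything the paper proves; it is a program for the open problem, and as written it has genuine gaps beyond the ones you flag yourself.

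The most serious gap is the asserted cancellation of the linear-in-$\cE$ term, which is the entire content of the conjecture and is nowhere established. The paper's own two-sided estimate \eqref{eq:main-theorem-2-equation} shows that for the Kaplan quasinorm and the ordinary sub-Laplacian the deviation enters \emph{linearly}, so the claimed quadratic behavior must come from the specific combination of three ingredients: the true fundamental solution $u$ (not $N_v$), the operator $\cL_\infty$ (not $\cL$), and the restriction to $\Sigma$. Your heuristic (``one power of $\delta$ from the correction, one from criticality,'' with the eikonal identity \eqref{eq:norm-horiz-grad} killing the naive linear term) does not isolate which of these is responsible; in particular, the worked example in the paper shows that \emph{off} $\Sigma$ the quantity $N\cL_\infty N$ contains a term $G_2^2-2F_3$ proportional to $t^2$ that makes it comparable to a constant rather than to $\delta^2$, so the restriction to $\Sigma$ is not a convenience but the locus of a special cancellation that any proof must explain; your outline treats $\Sigma$ only as a slice on which to estimate integrals. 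The second major gap is the lower bound: asserting that the quadratic form $B(\cE,\cE)$ is uniformly positive definite ``by exhibiting a direction'' is precisely the step where accidental cancellations could occur, and falling back on \eqref{eq:main-theorem-2-equation} is not a template because that estimate concerns a linear, not quadratic, dependence and a different operator. Finally, the expansion of $V(\tau)$ and $f(\boldx,0,\tau)$ in \eqref{eq:bgg-formula} to first order in $\cE$ with a remainder that is $O(\delta^2)$ \emph{uniformly in $\tau$ and after three horizontal differentiations} is a substantial analytic undertaking (these functions involve determinants and hyperbolic functions of $\boldOmega(\tau)$, and the integrand is only borderline integrable on $\Sigma$); you correctly identify this obstacle but do not resolve it. In short: the strategy is plausible and broadly consistent with the evidence the paper assembles, but none of the three load-bearing steps is carried out, so this remains a conjecture, as the paper itself acknowledges.
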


Conjecture \ref{conj:polarizability-conjecture}, if true, would imply that all step two polarizable Carnot groups are H-type groups. In Theorem \ref{th:anisotropic-Heis-groups} we verify Conjecture \ref{conj:polarizability-conjecture} for the groups $\Heis^n(\tfrac12,1,\ldots,1)$.

\smallskip

\noindent \paragraph{\bf Acknowledgements.} The author acknowledges support from the Simons Foundation under grant \#852888, `Geometric mapping theory and geometric measure theory in sub-Riemannian and metric spaces'. In addition, this material is based upon work supported by and while the author was serving as a Program Director at the National Science Foundation. Any opinion, findings, and conclusions or recommendations expressed in this material are those of the author and do not necessarily reflect the views of the National Science Foundation.

\section{Background and preliminary definitions}\label{sec:background}

\subsection{Notation}

Vectors, operators and matrices will be denoted in boldface. Real-valued functions will be denoted with regular lower case Greek letters. The expression $|\cdot|$ denotes the absolute value function in $\R$, while $|\cdot|_2$ denotes the Euclidean norm in $\R^n$ for any $n$. 

The expression $||\cdot||$ (decorated with subscripts) denotes various norms on operators or matrices. For example, for an $m\times m$ matrix $\boldA$ we denote by $||\boldA||_{op}$ the operator norm of $\boldA$ and by $||\boldA||_{HS} = \trace(\boldA^t\boldA)^{1/2}$ the Hilbert--Schmidt norm of $\boldA$. We recall that
\begin{equation}\label{eq:op-HS}
||\boldA||_{op} \le ||\boldA||_{HS} \le \sqrt{m} \, ||\boldA||_{op}
\end{equation}
for all $m \times m$ matrices $\boldA$.

The Hilbert--Schmidt norm is submultiplicative, hence
\begin{equation}\label{eq:HS-sub-multiplicative-1}
||\boldA^2||_{HS} \le ||\boldA||_{HS}^2
\end{equation}
for all $m \times m$ matrices $\boldA$. Inequality \eqref{eq:HS-sub-multiplicative-1} can be improved for skew-symmetric matrices. Indeed, if $\boldA$ is skew-symmetric ($\boldA^T = -\boldA$), then
\begin{equation}\label{eq:HS-sub-multiplicative-2}
\frac1{\sqrt{m}} ||\boldA||_{HS}^2 \le ||\boldA^2||_{HS} \le \frac1{\sqrt2} ||\boldA||_{HS}^2 \, .
\end{equation}
Inequality \eqref{eq:HS-sub-multiplicative-2} can be established by appealing to the canonical block diagonal representation of skew-symmetric matrices up to orthogonal equivalence. Details can be found, for instance, at \cite{mathstackexchange}. More precisely, if $\boldA$ is a skew-symmetric $m \times m$ matrix, then $\boldA$ is orthogonally equivalent to a block diagonal matrix of the form
$$
\begin{pmatrix} 0 & \lambda_1 \\ -\lambda_1 & 0 \end{pmatrix} \oplus \cdots \oplus \begin{pmatrix} 0 & \lambda_\ell \\ -\lambda_\ell & 0 \end{pmatrix}
$$
if $m = 2 \ell$ is even, or
$$
0 \oplus \begin{pmatrix} 0 & \lambda_1 \\ -\lambda_1 & 0 \end{pmatrix} \oplus \cdots \oplus \begin{pmatrix} 0 & \lambda_\ell \\ -\lambda_\ell & 0 \end{pmatrix}
$$
if $m = 2 \ell + 1$ is odd. In either case, we deduce that
$$
||\boldA||_{HS}^2 = \trace(-\boldA^2) = 2\sum_{j=1}^\ell \lambda_j^2
$$
and
$$
||\boldA^2||_{HS}^2 = \trace(\boldA^4) = 2\sum_{j=1}^\ell \lambda_j^4 \, .
$$
The inequalities \eqref{eq:HS-sub-multiplicative-2} easily follow. The inequalities in \eqref{eq:HS-sub-multiplicative-2} will play a role several times in this paper. See, for instance, the proof of Proposition \ref{prop:delta_0-characterization-of-nascent-H-type-groups} as well as our computation of the H-type deviation of free Carnot groups of step two in Example \ref{ex:free-group}.

\subsection{Stratified Lie groups}

Let $\G$ be a Lie group with Lie algebra $\fg$. We say that $\G$ is a {\it step two stratified Lie group} if $\fg$ admits a vector space decomposition, $\fg = \fv_1 \oplus \fv_2$ so that
\begin{equation}\label{eq:lie-bracket-relation}
[\fv_1,\fv_1]=\fv_2
\end{equation} 
and $[\fg,\fv_2] = 0$. Hence $\fg$ (and hence also $\G$) is nilpotent of step two. We denote by $m_1$, respectively $m_2$, the dimension of $\fv_1$, respectively $\fv_2$. We also abbreviate $m := m_1$. We may identify the Lie algebra with a Euclidean space,
$$
\fg = \fv_1 \oplus \fv_2 \simeq \R^{m_1 + m_2} =: \R^N.
$$
Since $\G$ is nilpotent, the exponential map $\exp:\fg \to \G$ is a diffeomorphism, and $\G$ itself can also be identified with the space $\R^N$. We denote the identity element of $\G$ by $0$.

As usual, we identify elements of $\fg$ with left invariant vector fields on $\G$. Under this identification, $\fv_1$ induces the {\it horizontal tangent bundle} of $\G$, denoted $H\G$. The horizontal bundle $H\G$ satisfies the bracket-generating condition, and hence any two points in $\G$ can be joined by a piecewise smooth curve, almost all of whose tangent vectors lie in the horizontal tangent bundle $H\G$. Such a curve is called {\it horizontal}.

The {\it dilation} $\delta_\lambda$ with {\it scale factor} $\lambda > 0$ is defined first on the level of the Lie algebra $\fg$ by $\delta_\lambda(U+T) = \lambda U + \lambda^2 T$, where $U+T \in \fv_1 \oplus \fv_2$. We promote $\delta_\lambda$ to a self-map of $\G$ by conjugating with the exponential map: $\delta_\lambda \circ \exp = \exp \circ \delta_\lambda$. A real-valued function $u$ on $\G$ is said to be {\it $s$-homogeneous}, $s \in \R$, if $u \circ \delta_\lambda = \lambda^s \, u$ for all $\lambda>0$.

Let $\mu$ be Haar measure in $\G$; it coincides (up to a multiplicative constant) with Lebesgue measure $\cL^N$ in $\G\simeq\R^N$. The Jacobian determinant of $\delta_\lambda$ is constant and equal to $\lambda^Q$, where
\begin{equation}\label{eq:Q}
Q = m + 2m_2
\end{equation}
is the {\it homogeneous dimension} of $\G$.

\subsection{Stratified Lie groups with sub-Riemannian structure, a.k.a.\ Carnot groups} Let $\G$ be a step two stratified Lie group as above. Let $g_h$ be an inner product defined in the horizontal layer $\fv_1$. We extend $g_h$ to a family of inner products defined in the horizontal tangent bundle $H\G$ via left translation. We continue to denote the resulting family of inner products by $g_h$, and we call such a structure a {\it horizontal metric} on $\G$. We will also sometimes denote $g_h$ by $\langle \cdot,\cdot \rangle_h$, and we will also denote the corresponding norm by $||\cdot||_h$. 

The choice of a horizontal metric $g_h$ equips $\G$ with a sub-Riemannian structure. To wit, the {\it sub-Riemannian distance} between points $\ttg$ and $\tth$ in $\G$ is given by
$$
d_{cc}(\ttg,\tth) = \inf_\gamma \int_0^1 ||\gamma'(s)||_h \, ds
$$
where the infimum is taken over all piecewise smooth horizontal curves $\gamma$ which join $\ttg$ to $\tth$ in $\G$. We call such a group $\G$ equipped with a horizontal metric $g_h$ a {\it step two sub-Riemannian stratified Lie group}. We will also term such groups as {\it step two Carnot group}. 

\begin{remark}
Note that there is a slight subtlety in this terminology. In many places in the literature one finds the term {\bf Carnot group} used to signify a step two stratified Lie group (as described in the previous subsection) which {\bf may} be endowed with a sub-Riemannian structure via selection of a horizontal metric. For us, the term {\it Carnot group} will refer to a stratified Lie group for which such a horizontal metric {\bf has} been fixed.
\end{remark}

In each Carnot group $\G$ and for $\lambda>0$, the dilation $\delta_\lambda:\G\to\G$ scales the metric $d_{cc}$ by a factor of $\lambda$: $d_{cc}(\delta_\lambda(\ttg),\delta_\lambda(\tth)) = \lambda \, d_{cc}(\ttg,\tth)$, $\ttg,\tth \in \G$. It is well known that the Hausdorff dimension of the metric space $(\G,d_{cc})$ is the homogeneous dimension $Q$ defined in \eqref{eq:Q}.

A mapping $F: \G \to \G'$ between step two Carnot groups is called a {\it Carnot group isomorphism} if $F$ is an isomorphism of stratified Lie groups and $F^*g_h' = g_h$, where $g_h$ (respectively, $g_h'$) denotes the horizontal metric in $\G$ (respectively, in $\G'$).

\subsection{Carnot groups with Riemannian structure} Let $\G$ be a step two Carnot group as defined in the previous subsection. Assume in addition that an inner product $g_v$ is defined in the vertical layer $\fv_2$ so that $\fv_1$ is $g$-orthogonal to $\fv_2$, where $g = g_h \oplus g_v$. Via left invariance, $g$ induces a family of inner products in the entire tangent bundle $T\G$. Equivalently, $g$ defines a Riemannian metric on $\G$. We say that $g$ is a {\it taming Riemannian metric} if it arises in the preceding fashion. We call $g_h$, respectively $g_v$, the {\it horizontal metric} and {\it vertical metric} associated to this Riemannian metric.

A map $F: \G \to \G'$ between step two Carnot groups with taming Riemannian metrics $g$ and $g'$ (respectively) is a {\it tamed Riemannian Carnot group isomorphism} if $F$ is an isomorphism of stratified Lie groups and $F^*g' = g$.

\subsection{Groups of Heisenberg type.} Let $\G$ be a step two Carnot group equipped with a taming Riemannian metric $g_h \oplus g_v$. We recall the definition of Kaplan's $J$ operator on $\G$.

\begin{definition}[Kaplan \cite{kap:h-type}]\label{def:j-operator}
We define a mapping $\boldJ:\fv_2 \to \End(\fv_1)$ via the identity
\begin{equation}\label{eq:j-operator}
g_h(\boldJ_T(U),V) = g_v([U,V],T)
\end{equation}
valid for all $U,V \in \fv_1$ and $T \in \fv_2$.
\end{definition}

For each $T \in \fv_2$, $\boldJ_T$ is a skew-symmetric endomorphism of the inner product space $(\fv_1,g_h)$. We emphasize that the map $\boldJ$ depends on the choice of the inner product $g_v$ on the vertical complement $\fv_2$ inside $\fg$. At the risk of some lack of precision, we will not consistently indicate this dependence explicitly in the notation. When it is necessary to do so, we will modify the notation by writing
\begin{equation}\label{eq:J-dependence-on-g-v}
\boldJ = \boldJ^{[g_v]}.
\end{equation}

In any step two group, the $\boldJ$ operator satisfies the following orthogonality condition:
$$
g_h(\boldJ_T(U),U) = 0 \qquad \forall U \in \fv_1, T \in \fv_2
$$
This is an easy consequence of the definition. It follows that
$$
||a\,U + b\,\boldJ_T(U)||_h^2 = |a|^2 \, ||U||_h^2 + |b|^2 \, ||\boldJ_T(U)||_h^2 \qquad \forall U \in \fv_1, T \in \fv_2, a,b \in \R.
$$

\begin{definition}[Kaplan]
A step two Carnot group $\G$ with taming Riemannian metric $g_h \oplus g_v$ is an {\bf H-type (Heisenberg-type) group} if $\boldJ_T$ is an orthogonal self-map of $(\fv_1,g_h)$ for every $T \in \fv_2$ with $||T||_v = 1$. \end{definition}

In view of the skew-symmetry of $\boldJ_T$, it is easy to see that $\G$ is H-type if and only if the identity
\begin{equation}\label{eq:h-type-characterization}
\boldJ_T^2 = -||T||_v^2 \Id
\end{equation}
holds for all $T \in \fv_2$. Recall that $\Id$ denotes the identity operator acting in the horizontal space $\fv_1 \simeq \R^m$. It follows easily from \eqref{eq:h-type-characterization} that
\begin{equation}\label{eq:h-type-characterization-2}
||\boldJ_T(U)||_h^2 = ||T||_v^2 \, ||U||_h^2 \qquad \forall U \in \fv_1 , T \in \fv_2;
\end{equation}
by polarization we also have
\begin{equation}\label{eq:h-type-characterization-3}
g_h(\boldJ_T(U),\boldJ_T(V)) = ||T||_v^2 \, g_h(U,V) \qquad \forall U,V \in \fv_1, T \in \fv_2.
\end{equation}
In fact, either \eqref{eq:h-type-characterization-2} or \eqref{eq:h-type-characterization-3} could be taken as the definition of an H-type group. We refer to \cite[Chapter 18]{blu} for these and other properties of H-type groups.

\smallskip

Note that the definition of H-type group structure requires an {\it a priori} selection of a vertical metric $g_v$, while a sub-Riemannian structure on $\G$ is defined as soon as a horizontal metric $g_h$ has been fixed. 
In this paper, we consider H-type groups geometrically as a subset of the larger class of step two Carnot groups. The selection of a vertical metric complicates this analysis. To this end, we make the following definition for step two Carnot groups.

\begin{definition}
A step two Carnot group $\G$ with sub-Riemannian metric $g_h$ is said to be a {\bf nascent H-type group} if there exists a taming Riemannian metric $g = g_h \oplus g_v$ so that $(\G,g)$ is of H-type.
\end{definition}

The distinction between H-type groups and nascent H-type groups could be understood by analogy with the classical distinction between metric spaces and metrizable topological spaces.\footnote{With this analogy in mind, we might have elected to use the terminology {\it H-typable group} in place of {\it nascent H-type group}. We prefer the latter terminology over the awkward former one.} In this paper, we seek to recognize the nascent H-type groups among all step two Carnot groups.

\smallskip

In any step two stratified Lie group $\G$ we define analytic maps $\bx:\G \to \fv_1$ and $\bt:\G \to \fv_2$ via the identity
\begin{equation}\label{eq:exp}
\ttg = \exp( \bx(\ttg) + \bt(\ttg) ) \qquad \forall \ttg \in \G.
\end{equation}

\begin{definition}[Kaplan]\label{def:Kaplan-norm}
Assume that $\G$ is a step two Carnot group equipped with a taming Riemannian metric $g = g_h \oplus g_v$. For $\ttg \in \G$ we set
$$
a_v(\ttg) := ||\bx(\ttg)||_h^4 + 16||\bt(\ttg)||_v^2
$$
and
$$
N_v(\ttg) := a_v(\ttg)^{1/4}
$$
\end{definition}

The function $N_v:\G\to\R$ is {\bf Kaplan's quasinorm} on $\G$. Its dependence on the vertical metric $g_v$ is explicitly highlighted in our notation. However, due to the canonical choice of such metric in case $\G$ is an H-type group, we abbreviate $N_v =: N$ in that case.

\begin{example}[Heisenberg groups and anisotropic Heisenberg groups]\label{ex:heisenberg-example}
We emphasize yet again that the H-type condition requires a choice of a taming Riemannian metric. We illustrate with a discussion of Heisenberg and anisotropic Heisenberg groups. Recall that the Heisenberg group $\Heis^n$ is a step two stratified Lie group whose Lie algebra $\heis_n = \fv_1 \oplus \fv_2$ with $\dim \fv_1 = 2n$ and $\dim \fv_2 = 1$. The horizontal layer $\fv_1$ is spanned by vectors by 
\begin{equation}\label{eq:heis-basis}
X_1,\ldots,X_n,Y_1,\ldots,Y_n
\end{equation}
and the vertical layer $\fv_2$ is spanned by the single vector $T$, with
\begin{equation}\label{eq:heis-commutators}
[X_j,Y_j]=T \qquad \forall j=1,\ldots,n.
\end{equation}
Up to now we have only described a stratified Lie group structure on $\Heis^n$. We may further equip $\Heis^n$ with a sub-Riemannian (Carnot) structure by declaring the basis \eqref{eq:heis-basis} to be orthonormal for an inner product $g_h$ on $\fv_1$.

Finally, we may equip $\Heis^n$ with a Riemannian structure by extending $g_h$ to a metric $g = g_h\oplus g_v$ on all of $\heis_n$. Since $\fv_2$ is one-dimensional, the choice of such a vertical metric $g_v = g_v^{\lambda}$ is determined by one real parameter, namely, a constant $\lambda > 0$ so that $\lambda^{-1} T$ is a $g_v$-unit vector. The resulting one-parameter family of Riemannian metrics has been used extensively in the sub-Riemannian geometric analysis of the Heisenberg group. For instance, it is well known (going back to the work of Pansu) that the sub-Riemannian structure on $\Heis^n$ may be obtained via a suitable Gromov--Hausdorff limit of the degenerating Riemannian manifolds $(\Heis^n,g_v^{\lambda})$ as $\lambda \to \infty$.

For now, let us consider the vertical metric $g_v^{1}$. With respect to this choice, $T$ itself is a unit vector and Kaplan's J operator 
$$
\boldJ = \boldJ^{[g_v^{1}]}
$$
satisfies $\boldJ_T(X_j) = Y_j$ and $\boldJ_T(Y_j) = -X_j$ for all $j=1,\ldots,n$. It easily follows that the (isotropic) Heisenberg group $\Heis^n$ is endowed with the structure of an H-type group.

\smallskip
Now let $\bb = (b_1,\ldots,b_n)$ be a vector of positive real numbers. The {\bf anisotropic Heisenberg group $\Heis^n(\bb)$} is the step two Carnot group whose Lie algebra $\heis_n(\bb)$ has the same decomposition $\fv_1 \oplus \fv_2$, where $\fv_1$ has a basis as in \eqref{eq:heis-basis} and $\fv_2$ is spanned by a single element $T$. However, the commutator relationship \eqref{eq:heis-commutators} is replaced by
\begin{equation}\label{eq:anisotropic-heis-commutators}
[X_j,Y_j]=b_j \, T \qquad \forall j=1,\ldots,n.
\end{equation}
Hence $\Heis^n = \Heis^n(1,\ldots,1)$. Observe that any two anisotropic Heisenberg groups $\Heis^n(\bb)$ and $\Heis^n(\bb')$ are isomorphic as stratified Lie groups. In particular, all of these groups are isomorphic, as stratified Lie groups, to the usual Heisenberg group $\Heis^n$. Indeed, consider the map $F:\Heis^n(\bb) \to \Heis^n$ which is given, on the level of the Lie algebras, by
$$
{F_*(X_j) = \sqrt{b_j} X_j', \atop F_*(Y_j) = \sqrt{b_j} Y_j',} \qquad \qquad j=1,\ldots,n,
$$
and
$$
F_*(T) = T'.
$$
Here we denoted by $X_j,Y_j,T$ the standard stratified basis in $\Heis^n(\bb)$ and by $X_j',Y_j',T'$ the corresponding basis in $\Heis^n$. This map induces a stratified isomorphism between the two algebraic structures, which is easily seen to preserve the Lie bracket structure via the following computation:
$$
[F_*(X_j),F_*(Y_j)] = b_j [X_j',Y_j'] = b_j T' = F_*(b_j T) = F_*([X_j,Y_j]).
$$
Thus, when considered only as stratified Lie groups, all anisotropic Heisenberg groups of a fixed topological dimension are isomorphic. Note that in the case when all of the $b_j$s are equal to a fixed value $c$, we could alternatively have chosen $F$ so that $F_*$ maps $X_j$ to $X_j'$, maps $Y_j$ to $Y_j'$, and maps $T$ to $c ^{-1}T'$.

As before, we may fix a sub-Riemannian structure on $\Heis^n(\bb)$ by declaring the basis \eqref{eq:heis-basis} to be orthonormal with respect to a metric $g_h$ on the horizontal space. With respect to this specific choice of horizontal metric, we make the following claim.

\begin{proposition}
If $\bb$ is a nonzero multiple of $\bb'$, then $\Heis^n(\bb)$ and $\Heis^n(\bb')$ are isomorphic as Carnot groups.  
\end{proposition}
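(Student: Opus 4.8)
The plan is to exhibit an explicit Carnot group isomorphism $F : \Heis^n(\bb) \to \Heis^n(\bb')$ when $\bb = c \, \bb'$ for some $c > 0$. Write $\bb = c\,\bb'$, so $b_j = c\,b_j'$ for each $j$. I take $F$ to be the map induced on Lie algebras by a rescaling of the stratified basis. The natural candidate is $F_*(X_j) = \sqrt{c}\,X_j'$, $F_*(Y_j) = \sqrt{c}\,Y_j'$ on the horizontal layer, together with $F_*(T) = c\,T'$ on the vertical layer. I would first check that $F_*$ respects the bracket relations \eqref{eq:anisotropic-heis-commutators}: on one hand $[F_*(X_j), F_*(Y_j)] = c\,[X_j', Y_j'] = c\,b_j'\,T'$, and on the other hand $F_*([X_j,Y_j]) = F_*(b_j T) = b_j\,c\,T' = c\,b_j'\,c\,T'$. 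These do not match as written, so the vertical scaling must be chosen more carefully.

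The correct bookkeeping is as follows. If $F_*(X_j) = \alpha_j X_j'$, $F_*(Y_j) = \beta_j Y_j'$ and $F_*(T) = \gamma\,T'$, then $F_*$ is a Lie algebra homomorphism sending $\Heis^n(\bb)$ to $\Heis^n(\bb')$ precisely when $\alpha_j \beta_j\, b_j' = b_j\,\gamma$ for all $j$, i.e.\ $\alpha_j\beta_j = (b_j/b_j')\,\gamma = c\,\gamma$ (using $\bb = c\,\bb'$); note that the ratio $b_j/b_j'$ is the same for all $j$, which is exactly where the hypothesis that $\bb$ is a scalar multiple of $\bb'$ enters. Meanwhile, $F$ is required to be a Carnot group isomorphism, which means $F^* g_h' = g_h$; since both horizontal metrics are declared by making the respective bases \eqref{eq:heis-basis} orthonormal, this forces $\alpha_j = \beta_j = 1$ for every $j$. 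Combining, I need $c\,\gamma = 1$, i.e.\ $\gamma = c^{-1}$. So the map is $F_*(X_j) = X_j'$, $F_*(Y_j) = Y_j'$, $F_*(T) = c^{-1}T'$ — exactly the alternative choice of $F$ flagged in the final paragraph of Example \ref{ex:heisenberg-example}, now applied with all $b_j/b_j'$ equal to the common value $c$.

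With $F_*$ so defined I would then verify the three things needed: (i) $F_*$ is a linear isomorphism of $\heis_n(\bb)$ onto $\heis_n(\bb')$ that preserves the stratification $\fv_1 \oplus \fv_2$ — immediate, since it acts diagonally in the given bases with nonzero entries and maps each layer onto the corresponding layer; (ii) $F_*$ is a Lie algebra homomorphism — check on the spanning brackets: $[F_*(X_j), F_*(Y_j)] = [X_j', Y_j'] = b_j'\,T' = b_j'\,c\,(c^{-1}T') = c\,b_j'\,F_*(T)$; since $b_j = c\,b_j'$, this equals $b_j\,F_*(T) = F_*(b_j T) = F_*([X_j,Y_j])$, and all other brackets vanish on both sides because both algebras are step two with $[\fg,\fv_2]=0$; (iii) $F^* g_h' = g_h$ — because $F_*$ carries the $g_h$-orthonormal basis $X_1,\dots,X_n,Y_1,\dots,Y_n$ of $\fv_1$ to the $g_h'$-orthonormal basis $X_1',\dots,X_n',Y_1',\dots,Y_n'$ of $\fv_1'$. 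Since the exponential maps are diffeomorphisms and $\Heis^n(\bb)$ is identified with $\heis_n(\bb)$ via $\exp$, the Lie algebra isomorphism $F_*$ integrates to a stratified Lie group isomorphism $F : \Heis^n(\bb) \to \Heis^n(\bb')$, and together with (iii) this makes $F$ a Carnot group isomorphism in the sense defined above.

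The argument is essentially routine; the only subtlety — the "main obstacle," such as it is — is recognizing that the horizontal metric constraint $F^* g_h' = g_h$ rigidly pins down the horizontal part of $F_*$ to be the identity in coordinates (no room for the $\sqrt{b_j}$-type rescalings available when one only asks for a stratified Lie group isomorphism), so that the scalar multiple hypothesis on $\bb$ is what makes the compensating vertical rescaling $\gamma = c^{-1}$ simultaneously consistent across all indices $j$. I would close by remarking that this shows the Carnot group isomorphism class of $\Heis^n(\bb)$ depends only on the projective class of $\bb$, and that one can normalize, e.g., $\max_j b_j = 1$, which is the convention used for the groups $\Heis^n(\tfrac12,1,\dots,1)$ appearing later.
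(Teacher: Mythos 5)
Your proof is correct and constructs exactly the same isomorphism as the paper ($F_*(X_j)=X_j'$, $F_*(Y_j)=Y_j'$, $F_*(T)=c^{-1}T'$), with the same bracket and metric verifications. The only quibble is the side remark that $F^*g_h'=g_h$ ``rigidly pins down'' the horizontal part to be the identity --- it only forces it to be a $g_h$-isometry --- but this does not affect the argument.
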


\begin{proof}
Assume that $\bb = c \bb'$ for some $c>0$. Define a stratified group isomorphism from $\Heis^n(\bb)$ to $\Heis^n(\bb')$, on the level of the Lie algebras, by
$$
{F_*(X_j) = X_j', \atop F_*(Y_j) = Y_j',} \qquad\qquad j=1,\ldots,n,
$$
and
$$
F_*(T) = c^{-1} T'.
$$
The fact that this isomorphism preserves the Lie bracket resides in the following computation:
$$
[F_*(X_j),F_*(Y_j)] = [X_j',Y_j'] = b_j' T' = c^{-1} b_j T' = F_*(b_j T) = F_*([X_j,Y_j]).
$$
Since the $X_j$s and $Y_j$s (respectively, the $X_j'$s and $Y_j'$s) form an orthonormal basis for $g_h$ (respectively $g_h'$), it follows from the definition of $F_*$ that $F^* g_h' = g_h$. Hence $F$ also preserves the sub-Riemannian structure, and makes $\Heis^n(\bb)$ and $\Heis^n(\bb')$ isomorphic as Carnot groups. 
\end{proof}

Finally, we assert that {\it the anisotropic Heisenberg group $\Heis^n(\bb)$ admits a taming Riemannian metric which makes it into an H-type group if and only if $b_1=b_2=\cdots=b_n$.} Indeed, if $b_j=\lambda^{-2}$ for all $j=1,\ldots,n$, then the vertical metric $g_v^{\lambda}$ (as defined above) induces an H-type structure on $\Heis^n(\bb)$. We postpone the proof of the converse direction to the following section, where we explicitly compute the deviation of anisotropic Heisenberg groups from the class of nascent H-type groups.
\end{example}

\section{H-type deviation of step two Carnot groups.} 

In this section we introduce the fundamental new concept underlying this work, the {\bf H-type deviation} of a step two Carnot group. We give basic properties, show that this quantity characterizes nascent H-type groups, and compute its value for several illustrative examples.

\subsection{H-type deviation: definition and basic properties}

\begin{definition}\label{def:HS-deviation}
Let $\G$ be a step two Carnot group. The {\bf Hilbert--Schmidt deviation of $\G$ from nascent H-type groups} (for short, the {\bf H-type deviation of $\G$}) is the quantity
\begin{equation}\label{eq:delta0}
\delta(\G) = \frac1{\sqrt{\dim\fv_1}} \, \inf_{g_v} \sup_{\substack{T \in \fv_2 \\ ||T||_v = 1}} ||(\boldJ_T^{[g_v]})^2 + \Id||_{HS} \, ,
\end{equation}
where the infimum is taken over all vertical metrics $g_v$. 
\end{definition}

Recall that $||\cdot||_{HS}$ denotes the Hilbert--Schmidt norm on matrices. We used this norm in Definition \ref{def:HS-deviation} for ease of computation. See, for instance, Examples \ref{ex:anisotropic-Heisenberg} and \ref{ex:free-group}. However, other matrix norms could be used in its place if desired.

Observe that $\delta(\G) < \delta$ if and only if there exists a taming Riemannian metric $g = g_h \oplus g_v$ on $\fg$  so that
\begin{equation}\label{eq:delta-alternate}
\norm{(\boldJ_T^{[g_v]})^2 + ||T||_v^2 \, \Id}_{HS} < \delta \, \sqrt{\dim\fv_1} \, ||T||_v^2 \qquad \forall T \in \fv_2.
\end{equation}
On occasion, we will want to consider the analogous quantity for a fixed vertical metric $g_v$. We therefore also introduce the notation
\begin{equation}\label{eq:delta0v}
\delta(\G,g_v) := \frac1{\sqrt{\dim \fv_1}} \, \sup_{\substack{T \\ ||T||_v = 1}} \norm{(\boldJ_T^{[g_v]})^2 + \Id}_{HS}.
\end{equation}
Thus 
$$
\delta(\G) = \inf_{g_v} \delta(\G,g_v).
$$ 

The relevance of the H-type deviation is highlighted by the following proposition. Proposition \ref{prop:delta_0-characterization-of-nascent-H-type-groups} plays a key role throughout this paper as the critical final step in each of our new characterizations for the class of H-type groups.

\begin{proposition}\label{prop:delta_0-characterization-of-nascent-H-type-groups}
Let $\G$ be a step two Carnot group. Then $\delta(\G) = 0$ if and only if $\G$ is a nascent H-type group.
\end{proposition}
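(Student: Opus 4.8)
The plan is to rephrase the H-type condition as a single identity about a fixed, metric-independent family of skew-symmetric matrices, and then to show that along a minimizing sequence of vertical metrics this identity is attained in the limit. Fix a $g_h$-orthonormal basis $U_1,\dots,U_m$ of $\fv_1$ and a basis $T_1,\dots,T_{m_2}$ of $\fv_2$, and let $\boldB^1,\dots,\boldB^{m_2}$ be the corresponding structure-constant matrices, so that $\boldB(s):=\sum_q s_q\boldB^q$ is a $g_h$-skew-symmetric endomorphism of $\fv_1$ for each $s\in\R^{m_2}$, independent of any vertical metric. The bracket-generating hypothesis $[\fv_1,\fv_1]=\fv_2$ forces $\boldB(s)\ne 0$ for $s\ne 0$. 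A direct computation from \eqref{eq:j-operator} shows that if $g_v$ has Gram matrix $G$ in the basis $(T_q)$, then $\boldJ^{[g_v]}_T=\boldB(Gt)$ and $\norm{T}_v^2=t^{\mathsf T}Gt$ for $T=\sum_r t_r T_r$; writing $s=Gt$ and $P=G^{-1}$ this becomes $\boldJ^{[g_v]}_T=\boldB(s)$ and $\norm{T}_v^2=s^{\mathsf T}Ps$. Hence, by \eqref{eq:h-type-characterization}, $(\G,g_h\oplus g_v)$ is H-type precisely when $\boldB(s)^2=-(s^{\mathsf T}Ps)\,\Id$ for every $s\in\R^{m_2}$, so that $\G$ is a nascent H-type group if and only if some positive definite $P$ realizes this identity. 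In particular the forward implication is immediate: if such a $P$ exists, the vertical metric with Gram matrix $P^{-1}$ makes $\G$ H-type, hence $\delta(\G,g_v)=0$ and $\delta(\G)=0$.

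Conversely, suppose $\delta(\G)=0$, and for each $k$ fix a vertical metric $g_v^{(k)}$, with inverse Gram matrix $P_k\succ 0$, satisfying $\norm{(\boldJ_T^{[g_v^{(k)}]})^2+\Id}_{HS}<1/k$ for all $T$ with $\norm{T}_{v^{(k)}}=1$. In the coordinates above this says $\norm{\boldB(s)^2+\Id}_{HS}<1/k$ whenever $s^{\mathsf T}P_ks=1$; scaling and the degree-two homogeneity of $s\mapsto\boldB(s)^2$ upgrade this to
\[
\norm{\boldB(s)^2+(s^{\mathsf T}P_ks)\,\Id}_{HS}\le\tfrac1k\,(s^{\mathsf T}P_ks)\qquad\text{for all }s\in\R^{m_2}.
\]
Taking traces, and using $\trace(\boldB(s)^2)=-\norm{\boldB(s)}_{HS}^2$ together with $\abs{\trace M}\le\sqrt m\,\norm{M}_{HS}$, we get $\bigl|\,m\,(s^{\mathsf T}P_ks)-\norm{\boldB(s)}_{HS}^2\,\bigr|\le(\sqrt m/k)\,(s^{\mathsf T}P_ks)$; hence $s^{\mathsf T}P_ks\to\tfrac1m\norm{\boldB(s)}_{HS}^2$ for every $s$. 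The right-hand side is a fixed quadratic form in $s$, positive definite since $\boldB(s)\ne 0$ for $s\ne 0$, so it equals $s^{\mathsf T}P_0s$ for a unique $P_0\succ 0$, and therefore $P_k\to P_0$. (Alternatively one obtains precompactness of $\{P_k\}$ in the cone of positive definite matrices without naming the limit, by combining the displayed inequality with the skew-symmetric estimates \eqref{eq:HS-sub-multiplicative-2} to bound $s^{\mathsf T}P_ks$ from above and below by fixed multiples of $\norm{\boldB(s)}_{HS}^2$, and then passing to a convergent subsequence.)

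Now fix $s$ and let $k\to\infty$ in the displayed inequality: the right-hand side tends to $0$ while $(s^{\mathsf T}P_ks)\,\Id\to(s^{\mathsf T}P_0s)\,\Id$, so $\boldB(s)^2=-(s^{\mathsf T}P_0s)\,\Id$ for all $s$. By the reduction of the first paragraph, the vertical metric $g_v$ with Gram matrix $P_0^{-1}$ endows $\G$ with an H-type structure, so $\G$ is a nascent H-type group, completing the proof. The one real difficulty is the non-compactness of the space of vertical metrics: a priori a minimizing sequence could blow up or degenerate. This is exactly what the trace computation (or \eqref{eq:HS-sub-multiplicative-2}) handles, by showing that the defining inequality already pins down the vertical metric up to a multiplicative factor tending to $1$, which forces the minimizing sequence to converge.
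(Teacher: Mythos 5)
Your proof is correct, and while it shares the paper's overall architecture (extract a minimizing sequence of vertical metrics, establish compactness, pass to the limit), the mechanism is genuinely different at the key step. The paper obtains compactness by choosing, for each $T\in\fv_2$, elements $U_T,V_T\in\fv_1$ with $[U_T,V_T]=T$ and running a chain of estimates through the operator norm, \eqref{eq:op-HS}, and the skew-symmetric inequality \eqref{eq:HS-sub-multiplicative-2} to get a uniform \emph{upper} bound $||T||_{v,j}\le\sqrt{2m}\,||U_T||_h||V_T||_h$; it then invokes subconvergence and a somewhat terse ``separability, diagonalization, and continuity'' argument to produce the limiting bilinear form, leaving the positive-definiteness of the limit largely implicit. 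Your Gram-matrix parametrization is sharper: after the substitution $s=Gt$, the operator $\boldJ_T^2=\boldB(s)^2$ is metric-independent, and the trace identity $\trace(\boldB(s)^2)=-\norm{\boldB(s)}_{HS}^2$ combined with $|\trace M|\le\sqrt{m}\,\norm{M}_{HS}$ pins down the limiting quadratic form exactly as $\tfrac1m\norm{\boldB(s)}_{HS}^2$, which is positive definite precisely because $[\fv_1,\fv_1]=\fv_2$ forces $\boldB(s)\ne 0$ for $s\ne 0$. This gives upper and lower bounds (hence non-degeneracy of the limit metric) in one stroke, identifies the optimal vertical metric explicitly, and incidentally shows the infimum in \eqref{eq:delta0} is attained --- a fact the paper only remarks on without proof. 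Two cosmetic points: the matrix of $\boldJ_T$ in the basis $(U_i)$ is $-\boldB(Gt)$ rather than $\boldB(Gt)$ with the paper's sign conventions (irrelevant, since only the square enters), and the paper's normalization of $\delta(\G,g_v)$ by $(\dim\fv_1)^{-1/2}$ means your threshold $1/k$ should be $\sqrt{m}/k$ (equally irrelevant).
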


\begin{proof}
If $\G$ is a nascent H-type group, then $\G$ supports a vertical metric $g_v$ so that $(\boldJ_T^{[g_v]})^2 = -\Id$ for every $T \in \fv_2$ with $||T||_v = 1$. Hence $\delta(\G) = 0$.

Now suppose that $\delta(\G) =0$. Then there exist vertical metrics $(g_{v,j})_{j \ge 1}$ so that
\begin{equation}\label{eq:prop-1}
\delta(\G,g_{v,j}) < \frac1j \qquad \forall j \ge 1.
\end{equation}

Fix $T \in \fv_2$. Choose $U_T,V_T \in \fv_1$ so that $[U_T,V_T]=T$ (cf.\ \eqref{eq:lie-bracket-relation}). Then
\begin{equation*}\begin{split}
||T||_{{v,j}}^2 
&= \langle [U_T,V_T], T \rangle_{{v,j}} \\
&= \langle \boldJ_T^{[g_{v,j}]}(U_T),V_T \rangle_h \\
\intertext{by \eqref{eq:j-operator}}
&\le \bigl|\bigl|\boldJ_T^{[g_{v,j}]}\bigr|\bigr|_{op} ||U_T||_h ||V_T||_h \\
&\le  \bigl|\bigl|\boldJ_T^{[g_{v,j}]}\bigr|\bigr|_{HS} ||U_T||_h ||V_T||_h \\
\intertext{by \eqref{eq:op-HS}}
&= \bigl|\bigl|\boldJ_{T/||T||_{v,j}}^{[g_{v,j}]}\bigr|\bigr|_{HS} ||T||_{{v,j}} \, ||U_T||_h ||V_T||_h \\
&\le m^{1/4} \, \bigl|\bigl|(\boldJ_{T/||T||_{v,j}}^{[g_{v,j}]})^2\bigr|\bigr|_{HS}^{1/2} \, ||T||_{{v,j}} \, ||U_T||_h ||V_T||_h \\
\intertext{by \eqref{eq:HS-sub-multiplicative-2}}
&\le m^{1/4} \, \left( \sqrt{m} + \bigl|\bigl|(\boldJ_{T/||T||_{v,j}}^{[g_{v,j}]})^2 + \Id\bigr|\bigr|_{HS} \right)^{1/2} \, ||T||_{{v,j}} \, ||U_T||_h ||V_T||_h \\
\end{split}\end{equation*}
Hence
\begin{equation}\begin{split}\label{eq:gvj1.5}
||T||_{{v,j}} 
&\le \sqrt{m}  \, \left( 1 + \delta(\G,g_{v,j}) \right)^{1/2} \, ||U_T||_h ||V_T||_h \\
&< \sqrt{m} (1+\tfrac1j)^{1/2} \, ||U_T||_h ||V_T||_h \\
&\le \sqrt{2m} \, ||U_T||_h ||V_T||_h\, .
\end{split}\end{equation}
for all $T \in \fv_2$. Using Cauchy--Schwarz we now conclude that
\begin{equation}\begin{split}\label{eq:gvj2}
|g_{v,j}(S,T)| \le 2m \, ||U_T||_h ||V_T||_h ||U_S||_h ||V_S||_h 
\end{split}\end{equation}
for all $S,T \in \fv_2$. 

Now fix $S$ and $T$ in $\fv_2$. The uniform estimate \eqref{eq:gvj2} ensures that the sequence $(g_{v,j}(S,T))$ subconverges to a value $g_{v,\infty}(S,T)$ which again satisfies 
$$
|g_{v,\infty}(S,T)| \le 2m \, ||U_T||_h ||V_T||_h ||U_S||_h ||V_S||_h \, .
$$
Using the separability of $\fv_2$, diagonalization, and a continuity argument, we conclude that $g_{v,\infty}$ defines a symmetric bilinear form on $\fv_2$ and hence induces a taming Riemannian metric $g_h \oplus g_{v,\infty}$ on $\fg$. Furthermore, $||T||_{v,j} \to ||T||_{v,\infty}$ and $\boldJ_T^{[g_{v,j}]} \to \boldJ_T^{[g_{v,\infty}]}$ for any $T \in \fv_2$. 

It follows from \eqref{eq:delta-alternate}, \eqref{eq:prop-1}, and \eqref{eq:gvj1.5} that
\begin{equation}\begin{split}\label{eq:gvj3}
\bigl|\bigl|(\boldJ_T^{[g_{v,j}]})^2 + ||T||_{v,j}^2 \, \Id\bigr|\bigr|_{HS} < \frac{2m^{3/2}}j \, ||U_T||_h^2 ||V_T||_h^2
\end{split}\end{equation}
for all $T \in \fv_2$. Letting $j \to \infty$ in \eqref{eq:gvj3}, we conclude that $\norm{ (\boldJ_T^{[g_{v,\infty}]})^2 + ||T||_{v,\infty}^2 \, \Id}_{HS} = 0$ for all $T$.  We conclude that $ (\boldJ_T^{[g_{v,\infty}]})^2= - ||T||_{v,\infty}^2 \, \Id$ for all $T \in \fv_2$, i.e., $(\G,g_h \oplus g_{v,\infty})$ is an H-type group. This completes the proof.
\end{proof}

\begin{remark}
A similar proof shows that the infimum in the definition of $\delta(\G)$ is always attained by some vertical metric. We omit the details.
\end{remark}

\subsection{H-type deviation: examples}\label{sec:examples}

We now compute the H-type deviation for several classes of step two Carnot groups $\G$.

\begin{example}[Anisotropic Heisenberg groups]\label{ex:anisotropic-Heisenberg}
Let $\G = \Heis^n(\bb)$ for some $\bb=(b_1,\ldots,b_n)$. For $\lambda>0$, let $g_v^\lambda$ denote the vertical metric on $\G$ for which $\lambda^{-1} T$ is a unit vector. For $p \ge 1$, let
$$
||\bb||_{p} := \left( \tfrac1n \sum_{j=1}^n b_j^p \right)^{1/p}
$$ 
denote the $\ell^p$ norm of $\bb$ with respect to the uniform probability measure on $\{1,\ldots,n\}$.

We will show that
\begin{equation}\label{eq:delta0anisotropic-Heisenberg}
\delta(\Heis^n(\bb)) = \delta(\Heis^n(\bb),g_v^{\lambda_0}) = \sqrt{1- \left(\frac{||\bb||_2}{||\bb||_4}\right)^4} \,,
\end{equation}
where $\lambda_0 = ||\bb||_{2}/(||\bb||_4)^2$. Observe that $||\bb||_{2} \le ||\bb||_{4}$ by the Cauchy--Schwarz inequality, with equality if and only if $\bb$ is a constant vector. Hence $\delta(\Heis^n(\bb)) = 0$ if and only if $\bb$ is a constant vector. As noted above, this condition precisely characterizes the case when $\Heis^n(\bb)$ is Carnot group isomorphic to the Heisenberg group $\Heis^n$.

Using the previously mentioned description of vertical metrics on $\Heis^n(\bb)$, we find that
\begin{equation*}\begin{split}
\delta(\Heis^n(\bb)) 
&= \frac1{\sqrt{2n}} \inf_{\lambda>0} || (\boldJ_{\lambda^{-1} T}^{[g_v^\lambda]})^2 + \Id||_{HS} \\
&= \frac1{\sqrt{2n}} \inf_{\lambda>0} || \lambda^{-2} (\boldJ_{T}^{[g_v^\lambda]})^2 + \Id||_{HS} \\
&= \inf_{\lambda>0} \sqrt{1-2\lambda^2 (||\bb||_2)^2 + \lambda^4 (||\bb||_4)^4} \, .
\end{split}\end{equation*}
The function $\lambda \mapsto F(\lambda) := 1-2\lambda^2 (||\bb||_2)^2 + \lambda^4 (||\bb||_4)^4$ is minimized for $\lambda = \lambda_0$, and $F(\lambda_0) = \sqrt{1- (||\bb||_{2} / ||\bb||_{4})^4}$. This completes the proof.
\end{example}

\begin{remark}
The {\bf generalized H-type groups}, introduced by Barilari and Rizzi \cite{br:generalized-H-type} include all anisotropic Heisenberg groups. A step two Carnot group $\G$ is said to be a {\bf generalized H-type group} if it carries a taming Riemannian metric $g = g_h \oplus g_v$ and there exists a symmetric linear operator $\boldS$ acting on $\fv_1$ so that $\boldJ_T^2 = -||T||_v^2 \, \boldS$ for all $T \in \fv_2$. It is immediate to see that for any generalized H-type group $\G$, we have 
$$
\delta(\G,g_v) = \frac{||\Id - \boldS||_{HS}}{||\Id||_{HS}} \, .
$$
Note that the class of generalized H-type groups is strictly larger than the class of H-type groups. For example, any step two Carnot group with one-dimensional vertical layer $\fv_2$ can be equipped with the structure of a generalized H-type group.
\end{remark}

\begin{example}[Free Carnot groups of step two]\label{ex:free-group}
Let $\G = \F_{2,m}$ denote the free Carnot group of step two and rank $m$. We first describe the structure of this group as a stratified Lie group. The Lie algebra admits the decomposition $\ff_{2,m} = \fv_1 \oplus \fv_2$ where $\fv_2 = \Lambda^2 \fv_1$ is the second exterior power of $\fv_1$. We select a basis $X_1,\ldots,X_m$ for $\fv_1$ and a basis $T_{ij}$, $1\le i < j \le m$, for $\fv_2$ so that $[X_i,X_j] = T_{ij}$ for all $i<j$. We emphasize that no other bracket relations are imposed beyond those mandated by the Lie algebraic structure of $\fg$. The horizontal layer $\fv_1$ has dimension $m$ while the vertical layer $\fv_2$ has dimension
$$
m_2 = \binom{m}{2}.
$$
We will consistently enumerate coordinates in $\fv_2$ using pairs of coordinates in $\fv_1$. We interpret the elements of $\fv_2$ in two ways:
\begin{itemize}
\item as a vector $\boldt = (t_{ij})$ in $\R^{\binom{m}{2}}$, indexed by pairs $(i,j)$ with $1\le i<j\le m$, and
\item as a skew-symmetric $m\times m$ matrix $\boldX = (x_{ij})$, where we identify the coordinate $t_{ij}$ with the $(i,j)$th entry $x_{ij}$ and extend this assignment uniquely as a skew-symmetric matrix.
\end{itemize}
We denote by $\boldX_\boldt$ the skew-symmetric $m\times m$ matrix corresponding to the vector $\boldt \in \R^{\binom{m}{2}}$. Observe that
$$
||\boldX_\boldt||_{HS} = \sqrt{2} \, |\boldt|_2.
$$

Next, we introduce a Carnot group structure in $\F_{2,m}$ by declaring $X_1,\ldots,X_m$ to be an orthonormal basis for a metric $g_h$ on the horizontal layer $\fv_1$. The homogeneous dimension of this Carnot group is $Q = m + 2 m_2 = m^2$.

When $m=2$, the group $\F_{2,2}$ is isomorphic to the first Heisenberg group, which is an H-type group. Hence $\delta(\F_{2,2}) = 0$. We will prove that
\begin{equation}
\delta(\F_{2,m}) = \sqrt{\frac{m-2}{m}} \qquad \forall \, m\ge 2.
\end{equation}
Fix $m \ge 3$. We will first show that
\begin{equation}\label{eq:delta0free2step}
\delta(\F_{2,m}) \le \sqrt{\frac{m-2}{m}} \, .
\end{equation}
To this end, we select the taming Riemannian metric $g=g_h \oplus g_v$ for which the standard vectors $T_{ij}$, $1\le i<j\le m$, are $g_v$-orthonormal. We find that
$$
\delta(\F_{2,m}) \le \delta(\F_{2,m},g_v) = \frac1{\sqrt{m}} \sup_{\boldt} || \boldJ_{T(\boldt)}^2 + \Id ||_{HS} \, ,
$$
where $T(\boldt) = \sum_{i<j} t_{ij} T_{ij}$ and the supremum is taken over all $\boldt = (t_{ij})_{i<j} \in \R^{\binom{m}{2}}$ with $|\boldt|_2 = 1$.

Observe that, for each $\boldt$ as above and each $i<j$, we have $g_h(\boldJ_{T(\boldt)}(X_i),X_j) = g_v(T_{ij},T(\boldt)) = t_{ij}$. In other words, the linear transformation $\boldJ_{T(\boldt)}$, expressed in the basis $X_1,\ldots,X_m$ for $\fv_1$, is given by multiplication by the skew symmetric matrix $\boldX_\boldt$. We conclude, under the assumption $|\boldt|_2= 1$, that
\begin{equation*}\begin{split}
\norm{ \boldJ_{T(\boldt)}^2 + \Id }_{HS}^2  
&= \trace( \boldJ_{T(\boldt)}^4 + 2 \boldJ_{T(\boldt)}^2 + \Id) \\
&= \trace( \boldX_\boldt^4 ) + 2 \trace(\boldX_\boldt^2) + m \\
& \le m - 2 \, ,
\end{split}\end{equation*}
where we used the facts that 
$$
\trace(\boldX_\boldt^2) = - ||\boldX_\boldt||_{HS}^2 = -2 |\boldt|_2^2 = -2
$$
and
$$
\trace( \boldX_\boldt^4 ) = || \boldX_\boldt^2 ||_{HS}^2 \le \frac12 ||\boldX_\boldt ||_{HS}^4 = 2 |\boldt|_2^4 = 2 \, ,
$$
cf.\ the inequality in \eqref{eq:HS-sub-multiplicative-2}.

\smallskip

Next, we prove that 
\begin{equation}\label{eq:delta0free2step2}
\delta(\F_{2,m}) \ge \sqrt{\frac{m-2}{m}}.
\end{equation}
To do this, we must bound $\delta(\F_{2,m},g_v)$ from below as $g_v$ ranges over all vertical metrics. Let $g_v$ be such a metric, and let $(S_{ij})$, $1\le i<j\le m$, be a $g_v$-orthonormal basis. Choose a matrix $\boldC = (c_{ij,k\ell}) \in \GL(m_2)$ which represents the standard basis $(T_{ij})$ in terms of the basis $(S_{ij})$. In other words, for each $1\le i<j\le m$,
$$
T_{ij} = \sum_{1\le k<\ell\le m} c_{ij,k\ell} \, S_{k\ell}.
$$
Now suppose that $T$ is a $g_v$-unit vector in $\fv_2$. Writing $T$ in the basis $(S_{k\ell})$, we identify $T$ with a vector $\boldt = (t_{k\ell})$ in $\R^{m_2}$, i.e., $t_{k\ell} = \langle T,S_{k\ell} \rangle_v$. Since $||T||_v = 1$, $\boldt$ is a Euclidean unit vector in $\R^{m_2}$. For $1\le i < j \le m$ we compute
$$
\langle \boldJ_T(X_i),X_j\rangle_h = \langle T,T_{ij} \rangle_v = \sum_{1\le k<\ell\le m} c_{ij,k\ell} \langle T,S_{k\ell} \rangle_v = (\boldC \cdot \boldt)_{ij}.
$$
It follows that the matrix representation of $\boldJ_T$ in the $g_h$-orthonormal basis $(X_i)$ is given by the skew-symmetric extension $\boldX_{\boldC \cdot \boldt}$ of the vector $\boldC \cdot \boldt \in \R^{\binom{m}{2}}$. In view of the preceding discussion, we conclude that
$$
\delta(\F_{2,m}) = \frac1{\sqrt{m}} \inf_{\boldC \in \GL(m_2)} \sup_{\substack{ \boldt \\ |\boldt|_2 = 1} } || \boldX_{c \cdot \boldt}^2 + \Id ||_{HS} \, .
$$
For each $\boldC \in \GL(m_2)$ we may choose $\lambda_0 = \lambda_0(\boldC)>0$ and a Euclidean unit vector $\boldt_0 = \boldt_0(\boldC) \in \R^{m_2}$ so that
$$
\boldC \cdot \boldt_0 = \lambda_0 \, \bolde_{12},
$$
where $\bolde_{12}$ denotes the first vector in the standard basis $(\bolde_{ij})_{i<j}$ for $\R^{\binom{m}{2}}$. Indeed, the choice
$$
\boldt_0 = \frac{\boldC^{-1} \cdot \bolde_{12}}{|\boldC^{-1} \cdot \bolde_{12}|_2} \qquad \mbox{and} \qquad \lambda_0 = \frac1{|\boldC^{-1} \cdot \bolde_{12}|_2} 
$$
works. Consequently,
\begin{equation*}\begin{split}
\delta(\F_{2,m}) 
&= \frac1{\sqrt{m}} \inf_{\boldC \in \GL(m_2)} \sup_{\substack{ \boldt \\ |\boldt|_2 = 1} } || \boldX_{\boldC \cdot \boldt}^2 + \Id ||_{HS} \\
&\ge \frac1{\sqrt{m}} \inf_{\boldC} || \boldX_{\boldC \cdot \boldt_0(c)}^2 + \Id ||_{HS}  \\
&=  \frac1{\sqrt{m}} \inf_{\boldC} || \lambda_0(\boldC)^2 \boldX_{{\bolde_{12}}}^2 + \Id ||_{HS}  \\
&\ge \frac1{\sqrt{m}} \inf_{\lambda>0} || \lambda^2 \boldX_{{\bolde_{12}}}^2 + \Id ||_{HS} \, .
\end{split}\end{equation*}
The matrix $\boldX_{{\bolde_{12}}}$ is a block diagonal matrix with one $2\times 2$ block of the form $\tiny \begin{pmatrix} 0 & 1 \\ -1 & 0 \end{pmatrix}$ and one $(m-2)\times(m-2)$ block consisting of all zeros. Hence
\begin{equation*}\begin{split}
|| \lambda^2 \boldX_{{\bolde_1}}^2 + \Id ||_{HS}^2 
&= \trace(\Id + 2 \lambda^2 \boldX_{{\bolde_1}}^2 + \lambda^4 \boldX_{{\bolde_1}}^4 ) \\
&= m - 4 \lambda^2 + 2 \lambda^4 \\
&= (m-2) + 2(1-\lambda^2)^2 \, .
\end{split}\end{equation*}
In conclusion, we obtain $\delta(\F_{2,m}) \ge \sqrt{(m-2)/m}$ as desired.
\end{example}

Lest the reader find the previous two examples too straightforward, we conclude with two examples demonstrating that the situation is more complicated than one might expect.

\begin{example}\label{ex:challenge-1}
For any $n \ge 2$ and $\eps \ge 0$, we define a step two stratified Lie group $\G^n_\epsilon$ of rank $2n$. The group has topological dimension $2n+2$ and two-dimensional center if $\eps>0$, and has topological dimension $2n+1$ and one-dimensional center if $\eps=0$. Specifically, the Lie algebra $\fg^n_\epsilon = \fv_1 \oplus \fv_2$ where $\fv_1$ is spanned by $2n$ elements $X_1,Y_1,\ldots,X_n,Y_n$ and
$$
\fv_2 = \begin{cases} \spa\{T\} & \epsilon = 0 \\ \spa\{T,U\} & \epsilon>0. \end{cases}
$$
The nontrivial Lie bracket relations in $\fv_\epsilon^n$ are 
$$
[X_j,Y_j] = T \quad \mbox{for $j=1,\ldots,n$} \qquad \qquad  \mbox{and} \qquad [X_1,X_2] = \epsilon U.
$$
Note that the resulting Lie algebra structure is well-defined regardless of whether $\epsilon = 0$ or $\epsilon$ is positive. It is easy to see that all of the groups $\G_\eps^n$ (for fixed $n$) are mutually isomorphic as stratified Lie groups. When $n=2$ and $\epsilon>0$, the group $\G^2_\epsilon$ is isomorphic (as a stratified Lie group) to the group labeled $N_{6,3,5}$ in Le Donne and Tripaldi's `Cornucopia of Carnot groups' \cite[p.\ 62]{ldt:cornucopia}.

For any $\eps\ge0$ we equip $\G^n_\eps$ with the structure of a Carnot group by declaring the basis 
\begin{equation}\label{eq:onb}
X_1,Y_1,\ldots,X_n,Y_n
\end{equation}
to be orthonormal for a horizontal metric $g_1$. When $\eps = 0$ the group $\G^n_0$ is isomorphic (as a Carnot group) to the Heisenberg group $\Heis^n$, and hence
$$
\delta(\G_0^n) = 0.
$$
In the limit as $\epsilon\to 0$ the Lie bracket relations converge to the corresponding relations in $\Heis^n$, with the two-dimensional center collapsing down to a one-dimensional center in the limit. Naively, one might expect that the values of the H-type deviation would follow suit. However, we will show that
\begin{equation}\label{eq:delta-G-epsilon-n}
\delta(\G^n_\eps) \ge \sqrt{1-\tfrac1n} \qquad \mbox{for all $n \ge 2$ and $\eps > 0$.}
\end{equation}
In fact, all of the groups $\G_\eps^n$ (for fixed $n$) are mutually equivalent as Carnot groups. This is true since it is possible to find a stratified Lie group isomorphism between any two such groups which preserves all of the elements in the $g_h$-orthonormal basis \eqref{eq:onb}. Thus it suffices to consider $\eps=1$ in what follows, and the preceding naive assumption is clearly false. However, in order to clarify the relationship between this example and Example \ref{ex:challenge-2} we will continue to work with a general positive $\eps$ for the remainder of this example.

Consequently, we fix $\eps>0$ and consider a vertical metric $g_v$ on $\fv_2$ with orthonormal basis $S_1$ and $S_2$. As in previous examples, we identify $g_v$ with a matrix 
\begin{equation}\label{eq:C-in-GL-2}
\boldC = \begin{pmatrix} a & b \\ c & d \end{pmatrix} \in \GL(2)
\end{equation}
by writing $T,U$ in the basis $S_1,S_2$:
\begin{eqnarray*}
T &= a S_1 + b S_2, \\
U &= c S_1 + d S_2.
\end{eqnarray*}
The second layer $\fv_2$ consists of all vectors $S = t_1 S_1 + t_2 S_2$ where $\boldt = (t_1,t_2)$ ranges over $\R^2$, and $g_v(S,S) = |\boldt|_2^2$. We compute
$$
\langle \boldJ_S(X_j),Y_j \rangle_h = \langle T,S \rangle_v = at_1 + bt_2 = (\boldC \cdot \boldt)_1 \qquad j=1,\ldots,n,
$$
and
$$
\langle \boldJ_S(X_1),X_2 \rangle_h = \eps \, \langle U,S \rangle_v = \eps(ct_1 + dt_2) = \eps (\boldC \cdot \boldt)_2 \, .
$$
Hence the matrix representation of $\boldJ_S^{[g_v]}$ in the $g_h$-orthonormal basis $X_1,Y_1,\ldots,X_n,Y_n$ is
$$
\left( 
\mbox{ 
\begin{tabular}{c|c}
\begin{tabular}{c|c}
$\begin{matrix} 0 & \eps (\boldC\cdot\boldt)_2 \\ - \eps (\boldC\cdot\boldt)_2 & 0 \end{matrix}$ & $\boldO_{2\times(n-2)}$ \\ \hline \\ $\boldO_{(n-2)\times 2}$ & $\boldO_{(n-2)\times(n-2)}$ 
\end{tabular} 
& $(\boldC\cdot\boldt)_1 \, \Id_n$ \\ \hline \\ $-(\boldC\cdot\boldt)_1 \, \Id_n$ & $\boldO_{n\times n}$
\end{tabular} 
}
\right)
$$
where $\boldO_{m\times n}$ denotes an $m\times n$ block consisting entirely of zeros. Writing $x = (\boldC\cdot\boldt)_1$ and $y = (\boldC\cdot\boldt)_2$, we find that the representation of $(\boldJ_S^{[g_v]})^2 + \Id$ is
$$
\left( 
\mbox{ 
\begin{tabular}{c|c|c|c}
$(1 - x^2 - \eps^2 y^2) \Id_2$ & $\boldO_{2\times(n-2)}$ & \mbox{$\begin{matrix} 0 & \eps x y \\ -\eps x y & 0 \end{matrix}$} & $\boldO_{2\times(n-2)}$ \\
\hline
$\boldO_{(n-2)\times 2}$ & $(1 - x^2) \Id_{n-2}$ & $\boldO_{(n-2)\times 2}$ & $\boldO_{(n-2)\times(n-2)}$ \\
\hline
\mbox{$\begin{matrix} 0 & -\eps x y \\ \eps x y & 0 \end{matrix}$} & $\boldO_{2\times(n-2)}$ & $(1 - x^2) \Id_{2}$ & $\boldO_{2\times(n-2)}$ \\
\hline
$\boldO_{(n-2)\times 2}$ & $\boldO_{(n-2)\times(n-2)}$ & $\boldO_{(n-2)\times 2}$ & $(1 - x^2) \Id_{n-2}$ \\
\end{tabular}
}
\right)
$$
and thus
\begin{equation*}\begin{split}
||(\boldJ_S^{[g_v]})^2 + \Id||_{HS}^2 &= 2(1-x^2-\eps^2 y^2)^2 + (2n-2)(1-x^2)^2 + 4\eps^2 x^2 y^2  \\
&= (2n)(1-x^2)^2 + 4 (2x^2-1)y^2 \eps^2 + 2 y^4 \eps^4 \, .
\end{split}\end{equation*}
We conclude that
$$
\delta(\G_\epsilon^n) = \inf_{\boldC \in \GL(2)} \sup_{\substack{\boldt \\ |\boldt|_2 = 1}} \sqrt{(1-x^2)^2 + \tfrac2n(2x^2-1)y^2 \eps^2 + \tfrac1n y^4 \eps^4} \qquad {x = (\boldC\cdot\boldt)_1, \atop y = (\boldC\cdot\boldt)_2 \, .}
$$
We express the rows of $\boldC$ in polar coordinates: $(a,b) = (p\cos\alpha,p\sin\alpha)$ and $(c,d) = (q\cos\beta,q\sin\beta)$ and we write $\boldt = (t_1,t_2) = (\cos\theta,\sin\theta)$. Then
$$
\delta(\G_\epsilon^n) = \inf \sup_\theta \sqrt{(1-x^2)^2 + \tfrac2n(2x^2-1)y^2 \eps^2 + \tfrac1n y^4 \eps^4} \qquad {x = p\cos(\theta-\alpha) \atop y=q\cos(\theta-\beta)} \, ,
$$
where the infimum is taken over all positive $p$ and $q$ and all $\alpha \ne \beta$ modulo $2\pi$. 
Replacing $\theta$ and $\alpha$ by $\theta+\beta$ and $\alpha+\beta$ respectively gives
\begin{equation}\label{eq:dGen}
\delta(\G_\epsilon^n) = \inf \sup_\theta \sqrt{(1-p^2\cos^2(\theta-\alpha))^2 + \tfrac2n(2p^2\cos^2(\theta-\alpha)-1)\cos^2(\theta) (q\eps)^2 + \tfrac1n \cos^4(\theta) (q\eps)^4},
\end{equation}
where the infimum is over all positive $p$ and $q$ and all $\alpha\ne 0$ modulo $2\pi$. Note that \eqref{eq:dGen} again shows that the Carnot group structure does not depend on $\eps$; the value of $\delta(\G_\epsilon^n)$ is the same for all choices of $\eps>0$ since the infimum is over all positive $q$.

For fixed $p>0$, $q>0$, and $\alpha \ne 0$, the supremum over $\theta$ in \eqref{eq:dGen} is no less than the value of the argument when $\theta = \alpha + \tfrac\pi2$. Therefore
\begin{equation*}\begin{split}
\delta(\G_\epsilon^n) 
&\ge \inf_{q>0,\alpha\ne 0} \sqrt{1 - \tfrac2n\sin^2(\alpha) (q\eps)^2 + \tfrac1n \sin^4(\alpha) (q\eps)^4} \\
&= \inf_{q>0,\alpha\ne 0} \sqrt{(1-\tfrac1n) + \tfrac1n(1 - \sin^2(\alpha) (q\eps)^2)^2} = \sqrt{1-\tfrac1n}.
\end{split}\end{equation*}
This completes the proof of \eqref{eq:delta-G-epsilon-n}. In particular, not only does the H-type deviation of $\G^n_\epsilon$ fail to converge to that of $\G^n_0$ as $\epsilon \to 0$, but in addition, $\delta(\G^n_1) \to 1$ as $n \to \infty$.
\end{example}

\begin{example}\label{ex:challenge-2}
Now consider the group $\bar\G_\eps^n$, $\eps>0$, defined as follows. The Lie algebra $\bar\fg_\eps^n = \fv_1 \oplus \fv_2$ where $\fv_1$ is spanned by $2n$ elements $X_1,Y_1,\ldots,X_n,Y_n$ and $\fv_2$ is spanned by a single element $T$. The nontrivial bracket relations in $\bar\fg_\eps^n$ are 
$$
[X_j,Y_j] = T \quad \mbox{for $j=1,\ldots,n$} \qquad \qquad  \mbox{and} \qquad [X_1,X_2] = \epsilon T.
$$
The group $\bar\G_\eps^n$ is isomorphic, as a stratified Lie group, to the Heisenberg group $\Heis^n$. Indeed, the new basis 
$$
\bar{X}_1 = X_1, \quad \bar{Y}_1 = Y_1, \quad \bar{X}_2 = X_2 - \eps Y_1, \quad \bar{Y}_2 = Y_2, \quad \dots \quad \bar{X}_n = X_n, \quad \bar{Y}_n = Y_n, \quad \mbox{and} \quad \bar{T} = T
$$ 
verifies the usual Heisenberg bracket relations. We equip $\bar\G_\eps^n$ with the structure of a Carnot group by declaring the basis $\bar{X}_1,\bar{Y}_1,\ldots,\bar{X}_n,\bar{Y}_n$ to be $g_h$-orthonormal. Note that the preceding stratified Lie group isomorphism is {\bf not} an isomorphism of Carnot groups with respect to this choice. We claim that
$$
\delta(\bar\G_\eps^n) = O(\eps),
$$
in particular, $\delta(\bar\G_\eps^n) \to 0$ as $\eps \to 0$. As in an earlier example, we note that vertical metrics on $\bar\G_\eps^n$ are indexed by a positive real parameter $\lambda$. Denote by $g_v^\lambda$ the vertical metric for which $\lambda^{-1} T$ is a $g_v$-unit vector. The matrix representation of $\boldJ_{T}^{[g_v]}$ in the $g_h$-orthonormal basis $X_1,Y_1,\ldots,X_n,Y_n$ is
$$
\left( 
\mbox{ 
\begin{tabular}{c|c}
\begin{tabular}{c|c}
$\begin{matrix} 0 & \eps \lambda^2 \\ - \eps \lambda^2 & 0 \end{matrix}$ & $\boldO_{2\times(n-2)}$ \\ \hline \\ $\boldO_{(n-2)\times 2}$ & $\boldO_{(n-2)\times(n-2)}$ 
\end{tabular} 
& $\lambda^2 \, \Id_n$ \\ \hline \\ $- \lambda^2 \, \Id_n$ & $\boldO_{n\times n}$
\end{tabular} 
}
\right)
$$
and hence
\begin{equation*}\begin{split}
||(\boldJ_{\pm\lambda^{-1}T}^{[g_v^\lambda]})^2 + \Id||_{HS}^2
&= ||\lambda^{-2} \, (\boldJ_{T}^{[g_v^\lambda]})^2 + \Id||_{HS}^2 \\
&= (2n)(1-\lambda^2)^2 + 4(2\lambda^2-1)\lambda^2 \eps^2 + 2 \lambda^4 \eps^4 =: F(\lambda).
\end{split}\end{equation*}
The minimum of $F(\lambda)$ over $\lambda>0$ is achieved when $\lambda^2 = \lambda_0^2 := (n+\eps^2)/(n+4\eps^2+\eps^4)$ and
$$
\delta(\bar\G_\eps^n) = \frac1{\sqrt{2n}} \sqrt{F(\lambda_0)} = \eps \, \sqrt{\frac{2+\tfrac{n-1}{n} \eps^2}{n+4\eps^2+\eps^4}} = O(\eps).
$$
Moreover, since $\eps \mapsto \delta(\bar\G_\eps^n)$ is strictly increasing (for fixed $n$) and the H-type deviation depends only on the Carnot group structure, we also deduce that no pair of these groups, $\G_{\eps_1}^n$ and $\G_{\eps_2}^n$, are isomorphic as Carnot groups. (The latter fact is also easy to see directly from the definition of this group.) By way of contrast, all of the groups $\G_\eps^n$, $\eps>0$, considered in Example \ref{ex:challenge-1} are isomorphic as Carnot groups.
\end{example}

\section{Quantitative deviation of step two Carnot groups from H-type groups}\label{sec:Htype}

Let $\G$ be a step two Carnot group, with Lie algebra $\fg = \fv_1 \oplus \fv_2$. We fix bases $X_1,\ldots,X_m$ and $T_1,\ldots,T_{m_2}$ for $\fv_1$ and $\fv_2$. We identify $\G$ with $\R^{m+m_2} \simeq \fg$ via the exponential map and introduce coordinates $(x_1,\ldots,x_m,t_1,\ldots,t_{m_2})$ in $\G$ via the assignment $\boldx(g) = \sum_j x_j X_j$ and $\boldt(g) = \sum_{q=1}^{m_2} t_q T_q$. With respect to these coordinates, the left-invariant horizontal vector fields $X_i$ take the following form:
$$
X_i = \frac{\partial}{\partial x_i} - \frac12 \sum_{q=1}^{m_2} \sum_{j=1}^m \, b_{ij}^q \, x_j \, \frac{\partial}{\partial t_q} \, .
$$
The coefficients $b_{ij}^q$, $1\le i,j\le m$, $1\le q \le m_2$, are the {\bf structure constants} of the Lie algebra. For each fixed $q$, the matrix $\boldB^q := (b_{ij}^q)$ is a skew-symmetric $m \times m$ matrix. We record the identity
$$
[X_i,X_j] = \sum_{q=1}^{m_2} b_{ij}^q \, \frac{\partial}{\partial t_q} \, , \qquad 1 \le i < j \le m \, .
$$
For later purposes we also adopt the following notation from \cite{gt:h-type}:
$$
\beta_i^q(\boldx) := - \sum_{j=1}^m b_{ij}^q x_j, \qquad i=1,\ldots,m , q = 1,\ldots,m_2 \,.
$$

\subsection{Calculus in step two Carnot groups with taming Riemannian metric.}

In this subsection we assume that $\G$ is equipped with a taming Riemannian metric $g = g_h \oplus g_v$, and that the basis $X_1,\ldots,X_m$ and $T_1,\ldots,T_{m_2}$ are $g_h$-orthonormal and $g_v$-orthonormal respectively. The formulas presented in this subsection are well known. See, for example, \cite{gt:h-type}.

Let us denote by $b := ||\boldx||_h^2$ and $c_v := ||\boldt||_v^2$, so that the function $a_v$ in Definition \ref{def:Kaplan-norm} satisfies
$$
a_v = b^2 + 16 c_v \, .
$$
(We continue to highlight expressions which depend on the choice of the vertical metric $g_v$.) We also introduce, for a sufficiently smooth real-valued function $u$ on $\G$, the notation
$$
u_{,ij} := \tfrac12 (X_i X_j u + X_j X_i u), \qquad i,j = 1,\ldots,m \, .
$$
The $m\times m$ matrix $(u_{,ij})$ represents the {\bf symmetrized horizontal Hessian} $(D^2_H)^* u$ of $u$. 

\begin{lemma}\label{lem:0}
For each $i,j=1,\ldots,m$, $X_i(x_j) = \delta_{ij}$. Hence $\nabla_0 b = 2\boldx$.
\end{lemma}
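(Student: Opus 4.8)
The plan is to compute directly from the coordinate expression for the horizontal vector fields displayed just before the lemma. First I would observe that $X_i$ differs from the coordinate field $\partial/\partial x_i$ only by a linear combination of the vertical fields $\partial/\partial t_q$. Since the coordinate function $x_j$ depends only on the horizontal variables, each $\partial/\partial t_q$ annihilates it, so applying $X_i$ to $x_j$ leaves only the term $(\partial/\partial x_i)(x_j) = \delta_{ij}$. This establishes the first assertion with no real obstruction.

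For the second assertion I would first record that, because $X_1,\ldots,X_m$ is $g_h$-orthonormal, the function $b = \|\boldx\|_h^2$ is simply $\sum_{j=1}^m x_j^2$ in these exponential coordinates. Applying the Leibniz rule together with the identity $X_i(x_j) = \delta_{ij}$ just proved gives $X_i b = \sum_{j=1}^m 2 x_j X_i(x_j) = 2x_i$ for each $i$. By the definition of the horizontal gradient, $\nabla_0 b = \sum_{i=1}^m (X_i b)\, X_i = 2\sum_{i=1}^m x_i X_i$, which is exactly $2\boldx$ once one identifies the map $\boldx\colon\G\to\fv_1$ with the corresponding left-invariant horizontal vector field $\sum_i x_i X_i$.

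There is no genuine difficulty here; the only point requiring care is purely notational, namely keeping straight the overlapping uses of the symbol $\boldx$ — as the point-valued map $\G\to\fv_1$, as the tuple of exponential coordinates $(x_1,\ldots,x_m)$, and (via left-invariance) as a horizontal vector field on $\G$ — so that the asserted identity $\nabla_0 b = 2\boldx$ is read as an equality of horizontal vector fields. I expect the proof in the paper to be the two or three lines sketched above.
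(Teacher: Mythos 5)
Your computation is correct and is exactly the standard argument: the paper itself omits the proof (it notes these formulas are well known, citing \cite{gt:h-type}), and the intended derivation is precisely to apply the displayed coordinate expression for $X_i$, note that the $\partial/\partial t_q$ terms annihilate $x_j$, and then use the Leibniz rule on $b=\sum_j x_j^2$. Your closing remark about reading $\nabla_0 b = 2\boldx$ as an identification of the $\fv_1$-valued map $\boldx$ with a horizontal vector field is the right way to parse the paper's notation.
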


\begin{lemma}\label{lem:1}
For each $i,j,k=1,\ldots,m$, $X_iX_j(x_k) = 0$. Hence $X_iX_j(b) = 2\delta_{ij}$ and $b_{,ij} = 2\delta_{ij}$, so $(D^2_H)^* b = 2\Id$.
\end{lemma}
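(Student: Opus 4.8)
The statement to prove is Lemma~\ref{lem:1}, asserting $X_iX_j(x_k)=0$ for all $i,j,k$, and the consequences $X_iX_j(b)=2\delta_{ij}$, $b_{,ij}=2\delta_{ij}$, and $(D^2_H)^*b=2\Id$.

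\textbf{Plan.} The proof is a short direct computation building on Lemma~\ref{lem:0}. First I would recall the explicit coordinate form of the horizontal vector fields, namely $X_i = \partial/\partial x_i - \tfrac12\sum_{q,\ell} b_{i\ell}^q x_\ell \,\partial/\partial t_q$, which was recorded just before Lemma~\ref{lem:0}. Applying $X_j$ to the coordinate function $x_k$ gives $X_j(x_k)=\delta_{jk}$ by Lemma~\ref{lem:0}, a constant function. Then applying $X_i$ to this constant yields $X_iX_j(x_k) = X_i(\delta_{jk}) = 0$, since any vector field annihilates constants. This is the entire content of the first assertion.

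For the consequences: since $b = \|\boldx\|_h^2 = \sum_{k=1}^m x_k^2$, the Leibniz rule (valid because $X_i$ and $X_j$ are first-order differential operators) gives
\begin{equation*}
X_iX_j(b) = \sum_{k=1}^m X_iX_j(x_k^2) = \sum_{k=1}^m \bigl( 2 x_k X_iX_j(x_k) + 2 X_i(x_k) X_j(x_k) \bigr).
\end{equation*}
The first term in each summand vanishes by the identity just proved, and $X_i(x_k)X_j(x_k) = \delta_{ik}\delta_{jk}$ by Lemma~\ref{lem:0}, so the sum collapses to $2\sum_k \delta_{ik}\delta_{jk} = 2\delta_{ij}$. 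Since $X_iX_j(b) = X_jX_i(b) = 2\delta_{ij}$ is already symmetric in $i,j$, the symmetrized second derivative satisfies $b_{,ij} = \tfrac12(X_iX_j b + X_jX_i b) = 2\delta_{ij}$, which is exactly the statement that the symmetrized horizontal Hessian $(D^2_H)^* b$ equals $2\Id$.

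\textbf{Main obstacle.} There is essentially no obstacle here; the lemma is a routine bookkeeping step recorded for later use. The only point requiring a modicum of care is confirming that the Leibniz rule may be applied to the composition $X_iX_j$ acting on a product — it can, because each $X_i$ is a derivation, so $X_iX_j(fg)$ expands by two applications of the product rule — and noting that $X_i$ kills constant functions, which makes the first assertion immediate from Lemma~\ref{lem:0}. Everything else is direct substitution.
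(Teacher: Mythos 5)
Your proof is correct and is the standard direct computation; the paper itself omits a proof of this lemma, citing it as well known, and what you have written is exactly the routine verification that would be supplied. The key observations — that $X_j(x_k)=\delta_{jk}$ is constant (so annihilated by $X_i$) and that the Leibniz expansion of $X_iX_j(\sum_k x_k^2)$ collapses to $2\sum_k\delta_{ik}\delta_{jk}=2\delta_{ij}$, already symmetric in $i,j$ — are all in place.
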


\begin{lemma}\label{lem:2}
For each $i=1,\ldots,m$ and $q=1,\ldots,m_2$, $X_i(t_q) = \tfrac12 \beta_i^q$. Hence $\nabla_0 c_v =  \boldJ_{\boldt}(\boldx)$.
\end{lemma}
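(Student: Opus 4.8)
The plan is to derive both assertions directly from the explicit coordinate expression for the horizontal vector fields $X_i = \partial_{x_i} - \tfrac12 \sum_{q=1}^{m_2}\sum_{j=1}^m b_{ij}^q x_j \partial_{t_q}$ recorded above. For the first claim, I simply apply $X_i$ to the coordinate function $t_q$: since $t_q$ depends only on the vertical coordinates, $\partial_{x_i}(t_q) = 0$ and $\partial_{t_p}(t_q) = \delta_{pq}$, so $X_i(t_q) = -\tfrac12 \sum_{j=1}^m b_{ij}^q x_j$, which is exactly $\tfrac12 \beta_i^q(\boldx)$ by the definition $\beta_i^q(\boldx) = -\sum_j b_{ij}^q x_j$.

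For the second claim I first note that, because $T_1,\ldots,T_{m_2}$ is $g_v$-orthonormal and $\boldt(\ttg) = \sum_q t_q T_q$, one has $c_v = ||\boldt||_v^2 = \sum_q t_q^2$. Differentiating via the chain rule and invoking the first part gives $X_i(c_v) = 2\sum_q t_q X_i(t_q) = \sum_q t_q \beta_i^q(\boldx)$. Since $X_1,\ldots,X_m$ is $g_h$-orthonormal, the horizontal gradient $\nabla_0 c_v = \sum_i X_i(c_v) X_i$ is the element of $\fv_1$ whose $i$-th coordinate is $X_i(c_v)$. It remains to recognize this vector as $\boldJ_{\boldt}(\boldx)$, and for this I record the coordinate form of the $J$-operator: from its defining identity $g_h(\boldJ_{T_q}(U),V) = g_v([U,V],T_q)$ together with $[X_i,X_j] = \sum_p b_{ij}^p T_p$ and the orthonormality of both bases, one gets $g_h(\boldJ_{T_q}(X_j),X_i) = g_v([X_j,X_i],T_q) = -b_{ij}^q$. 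Hence $\boldJ_{T_q}(\boldx) = \sum_j x_j \boldJ_{T_q}(X_j)$ has $i$-th coordinate $-\sum_j b_{ij}^q x_j = \beta_i^q(\boldx)$; summing against $t_q$ and using the linearity of $T \mapsto \boldJ_T$ gives that $\boldJ_{\boldt}(\boldx)$ has $i$-th coordinate $\sum_q t_q \beta_i^q(\boldx) = X_i(c_v)$, which is what we wanted.

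I expect no real obstacle here; the computation is elementary once the bases are taken orthonormal. The only point demanding attention is the sign bookkeeping --- in particular the minus sign in $g_h(\boldJ_{T_q}(X_j),X_i) = -b_{ij}^q$, which originates in the antisymmetry $[X_j,X_i] = -[X_i,X_j]$ and must be tracked consistently against the minus sign built into the definition of $\beta_i^q$, so that the two expressions for the $i$-th coordinate of $\nabla_0 c_v$ agree term by term rather than up to a sign. One could alternatively cite the coordinate formula for $\boldJ$ from \cite{gt:h-type} rather than rederive it, but the derivation is short enough to include.
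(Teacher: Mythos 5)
Your proof is correct and follows essentially the same route as the paper: the first claim is read off from the coordinate expression for $X_i$, and the identity $\nabla_0 c_v = \boldJ_{\boldt}(\boldx)$ is obtained by computing $X_i(c_v) = \sum_q t_q \beta_i^q$ via the chain rule and matching it, through the defining relation $g_h(\boldJ_T(U),V) = g_v([U,V],T)$ and the structure constants, with $\langle \boldJ_{\boldt}(\boldx),X_i\rangle_h$. Your sign bookkeeping (in particular $g_h(\boldJ_{T_q}(X_j),X_i) = -b_{ij}^q$ cancelling against the minus in $\beta_i^q$) agrees with the paper's computation.
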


We pause here to clarify that the notation $\boldJ_{\boldt}(\boldx)$ denotes the function $\ttg \mapsto \boldJ_{\boldt(\ttg)}(\boldx(\ttg)$ mapping $\G$ into $\fv_1$. For the reader's convenience we also sketch the proof of the above formula for $\nabla_0 c_v$. It suffices to show that $X_i(c_v) = \langle \boldJ_\boldt(\boldx),X_i\rangle_h$ for all $i=1,\ldots,m$. To this end, we compute
\begin{equation*}\begin{split}
X_i(c_v) &= \sum_{q=1}^{m_2} 2 t_q X_i(t_q) = \sum_q t_q \beta_i^q \\
&= - \sum_j \sum_q \beta_{ij}^q x_j t_q = - \sum_j \sum_q x_j t_q \langle [X_i,X_j],T_q \rangle_v \\
&= \langle \boldJ_{\sum_q t_q T_q}(\sum_j x_j X_j), X_i \rangle_h = \langle \boldJ_{\boldt}(\boldx), X_i \rangle_h.
\end{split}\end{equation*}

\begin{lemma}\label{lem:3}
For each $i,j=1,\ldots,m$ and $q=1,\ldots,m_2$, $X_iX_j(t_q) = \tfrac12 b_{ij}^q$. Hence $X_iX_j(c_v) = \langle \boldJ_{\boldt}(X_i),X_j \rangle_h + \tfrac12 \sum_q \beta_i^q\beta_j^q$ and $(c_v)_{,ij} = \tfrac12 \sum_q \beta_i^q\beta_j^q$.
\end{lemma}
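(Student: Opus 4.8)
The plan is to obtain Lemma \ref{lem:3} by differentiating the first-order formulas of Lemmas \ref{lem:0}--\ref{lem:2} one additional time, keeping careful track of the skew-symmetry of the structure constant matrices $\boldB^q$ and of the defining identity \eqref{eq:j-operator} of the $J$ operator.

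First I would prove the coordinate identity $X_iX_j(t_q) = \tfrac12 b_{ij}^q$. In the chosen coordinates one has $X_i = \partial_{x_i} + \tfrac12\sum_q \beta_i^q\,\partial_{t_q}$, where $\beta_i^q = -\sum_k b_{ik}^q x_k$ is a function of the horizontal variables alone. By Lemma \ref{lem:2}, $X_j(t_q) = \tfrac12\beta_j^q$, hence $X_iX_j(t_q) = \tfrac12 X_i(\beta_j^q) = \tfrac12\,\partial_{x_i}(\beta_j^q) = -\tfrac12 b_{ji}^q = \tfrac12 b_{ij}^q$, the last equality being the skew-symmetry $(\boldB^q)^T = -\boldB^q$. (As a check, this gives $X_iX_j(t_q) - X_jX_i(t_q) = b_{ij}^q$, consistent with $[X_i,X_j] = \sum_q b_{ij}^q\,\partial_{t_q}$, while the symmetrized horizontal Hessian $(t_q)_{,ij}$ vanishes.)

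Next I would apply the product rule to $c_v = \sum_q t_q^2$. Using Lemma \ref{lem:2}, $X_j(c_v) = \sum_q 2t_q X_j(t_q) = \sum_q t_q\beta_j^q$, and differentiating once more,
$$X_iX_j(c_v) = \sum_q\bigl(X_i(t_q)\,\beta_j^q + t_q\,X_i(\beta_j^q)\bigr) = \tfrac12\sum_q\beta_i^q\beta_j^q + \sum_q t_q\,b_{ij}^q,$$
where I used Lemma \ref{lem:2} and the first step. To recognize the last sum I would invoke \eqref{eq:j-operator}: since $[X_i,X_j] = \sum_q b_{ij}^q T_q$ in $\fv_2$ and the $T_q$ are $g_v$-orthonormal, $\langle\boldJ_{\boldt}(X_i),X_j\rangle_h = \langle[X_i,X_j],\boldt\rangle_v = \sum_q b_{ij}^q t_q$, which yields the stated formula for $X_iX_j(c_v)$. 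Finally, for $(c_v)_{,ij} = \tfrac12(X_iX_j(c_v) + X_jX_i(c_v))$, observe that the term $\langle\boldJ_{\boldt}(X_i),X_j\rangle_h$ is skew-symmetric in $(i,j)$ because each $\boldJ_{\boldt}$ is a skew-symmetric endomorphism of $(\fv_1,g_h)$, and hence is annihilated by the symmetrization, whereas $\tfrac12\sum_q\beta_i^q\beta_j^q$ is already symmetric; this gives $(c_v)_{,ij} = \tfrac12\sum_q\beta_i^q\beta_j^q$. There is no genuine obstacle here: the argument is a direct computation, and the only point demanding care is keeping consistent the sign conventions encoded in the skew-symmetry of the $\boldB^q$ and in the index placement in the definition \eqref{eq:j-operator} of the $J$ operator.
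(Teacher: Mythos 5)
Your computation is correct and is exactly the direct coordinate verification the paper intends (it omits the proof of Lemma \ref{lem:3} as ``well known,'' but the sketch it does give for the gradient formula in Lemma \ref{lem:2} follows the same pattern of expanding $X_i$ in coordinates, using the skew-symmetry of $\boldB^q$, and identifying $\sum_q b_{ij}^q t_q$ via the defining identity \eqref{eq:j-operator}). The symmetrization step correctly kills the skew part $\langle\boldJ_{\boldt}(X_i),X_j\rangle_h$, so nothing is missing.
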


\begin{lemma}\label{lem:4}
For each $i=1,\ldots,m$, $X_i(a_v) = \langle 4 ||\boldx||_h^2 \boldx + 16 \boldJ_{\boldt}(\boldx), X_i\rangle_h$. Hence 
$$
||\nabla_0(a_v)||_h^2 = 16( ||\boldx||_h^6 + 16||\boldJ_{\boldt}(\boldx)||_h^2).
$$
Moreover, $X_i(N_v) = N_v^{-3} \langle ||\boldx||_h^2 \boldx + 4 \boldJ_{\boldt}{\boldx}, X_i\rangle_h$ and hence
$$
||\nabla_0 N_v||_h^2 = N_v^{-6} (||\boldx||_h^6 + 16 ||\boldJ_{\boldt}{\boldx}||_h^2 ) \, .
$$
Here $N_v$ denotes Kaplan's quasinorm associated to the given vertical metric $g_v$.
\end{lemma}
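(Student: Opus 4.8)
The plan is to derive all four identities by the chain rule from Lemmas \ref{lem:0} and \ref{lem:2}, using the pointwise orthogonality relation $\langle \boldx, \boldJ_{\boldt}(\boldx) \rangle_h = 0$ (valid since $\boldJ_T$ is skew-symmetric on $(\fv_1, g_h)$ for every $T \in \fv_2$, as noted after Definition \ref{def:j-operator}) to collapse the resulting squared norms into the stated closed form.

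First I would compute $X_i(a_v)$. Since $a_v = b^2 + 16 c_v$ with $b = ||\boldx||_h^2$ and $c_v = ||\boldt||_v^2$, the chain rule together with $X_i(b) = 2x_i$ (Lemma \ref{lem:0}) and $X_i(c_v) = \langle \boldJ_{\boldt}(\boldx), X_i\rangle_h$ (Lemma \ref{lem:2}) gives
\[
X_i(a_v) = 2b\,X_i(b) + 16\,X_i(c_v) = 4||\boldx||_h^2 x_i + 16\langle \boldJ_{\boldt}(\boldx), X_i\rangle_h = \langle 4||\boldx||_h^2\boldx + 16\boldJ_{\boldt}(\boldx), X_i\rangle_h.
\]
To pass to $||\nabla_0 a_v||_h^2 = \sum_i X_i(a_v)^2$, I would read the right-hand side as the squared $g_h$-norm of the horizontal vector $4||\boldx||_h^2\boldx + 16\boldJ_{\boldt}(\boldx)$ and expand it; the cross term is a multiple of $\langle \boldx, \boldJ_{\boldt}(\boldx)\rangle_h$, which vanishes by skew-symmetry, leaving $16||\boldx||_h^6 + 256||\boldJ_{\boldt}(\boldx)||_h^2 = 16(||\boldx||_h^6 + 16||\boldJ_{\boldt}(\boldx)||_h^2)$.

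For $N_v = a_v^{1/4}$, the chain rule gives $X_i(N_v) = \tfrac14 a_v^{-3/4} X_i(a_v) = \tfrac14 N_v^{-3} X_i(a_v)$, and dividing the expression above for $X_i(a_v)$ by $4$ yields $X_i(N_v) = N_v^{-3}\langle ||\boldx||_h^2\boldx + 4\boldJ_{\boldt}(\boldx), X_i\rangle_h$. Squaring and summing over $i$, and once more using $\langle \boldx, \boldJ_{\boldt}(\boldx)\rangle_h = 0$, produces $||\nabla_0 N_v||_h^2 = N_v^{-6}(||\boldx||_h^6 + 16||\boldJ_{\boldt}(\boldx)||_h^2)$.

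The computation is routine, and I do not anticipate any real obstacle. The one point that genuinely matters is the invocation of the skew-symmetry of $\boldJ_T$ to annihilate the cross term in the squared norm; without it the right-hand sides would not simplify to the asserted form.
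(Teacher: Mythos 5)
Your computation is correct and is exactly the argument the paper intends: the paper states Lemma \ref{lem:4} without proof (citing the formulas as well known), but the preceding material sets up precisely this chain-rule calculation from Lemmas \ref{lem:0} and \ref{lem:2}, and the identity $\norm{aU+b\boldJ_T(U)}_h^2 = a^2\norm{U}_h^2 + b^2\norm{\boldJ_T(U)}_h^2$ recorded after Definition \ref{def:j-operator} is exactly the cross-term cancellation you invoke. No gaps.
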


\subsection{Characterizing H-type groups via the horizontal derivative of Kaplan's quasinorm}

Considering the conclusion in Lemma \ref{lem:4} and using \eqref{eq:h-type-characterization-2}, we conclude the well-known fact that if $\G$ is H-type, then 
$$
||\nabla_0 N||_h^2 = \frac{||\boldx||_h^2 (||\boldx||_h^4 + 16 ||\boldt||_v^2)}{N^6} = \frac{||\boldx||_h^2}{N^2}.
$$
See also \eqref{eq:norm-horiz-grad}. In this subsection, we quantify this identity within the broader class of step two Carnot groups. More precisely, we estimate the deviation of $||\nabla_0 N_v||_h^2$ from its value in the H-type case. 

\begin{proposition}\label{prop:H-type-stability-1}
Let $\G$ be a step two Carnot group, with metric $g_h$ defined in the horizontal layer $\fv_1$ of $\fg$. Then
\begin{equation}\label{eq:H-type-stability-1}
\inf_{g_v} \sup_{0 \ne \ttg \in \G} \left| \frac{||\boldx(\ttg)||_h^2}{N_v(\ttg)^2} - ||(\nabla_0 N_v)(\ttg)||_h^2 \right| \le \sqrt{m} \, \delta(\G).
\end{equation}
\end{proposition}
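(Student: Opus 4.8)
The plan is to bound, for an arbitrary fixed vertical metric $g_v$, the quantity $\sup_{0\ne\ttg}\bigl|\, ||\boldx(\ttg)||_h^2/N_v(\ttg)^2 - ||(\nabla_0 N_v)(\ttg)||_h^2\,\bigr|$ by $\sqrt m\,\delta(\G,g_v)$, and then take the infimum over $g_v$ (using $\delta(\G)=\inf_{g_v}\delta(\G,g_v)$, stated just after \eqref{eq:delta0v}).

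First I would rewrite the difference using Lemma \ref{lem:4}. Since $N_v^4 = a_v = ||\boldx||_h^4 + 16||\boldt||_v^2$, one has $||\boldx||_h^2/N_v^2 = (||\boldx||_h^6 + 16||\boldx||_h^2||\boldt||_v^2)/N_v^6$, and combining with the formula for $||\nabla_0 N_v||_h^2$ from Lemma \ref{lem:4} gives
$$
\frac{||\boldx||_h^2}{N_v^2} - ||\nabla_0 N_v||_h^2 = \frac{16\bigl(||\boldx||_h^2\,||\boldt||_v^2 - ||\boldJ_{\boldt}(\boldx)||_h^2\bigr)}{N_v^6}.
$$
Because $\boldJ_{\boldt}$ is $g_h$-skew-symmetric, $||\boldJ_{\boldt}(\boldx)||_h^2 = -\langle \boldJ_{\boldt}^2(\boldx),\boldx\rangle_h$, so the numerator equals $16\langle(\boldJ_{\boldt}^2 + ||\boldt||_v^2\,\Id)(\boldx),\boldx\rangle_h$. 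This is exactly the point where the operator $\boldJ_{\boldt}^2 + ||\boldt||_v^2\Id$, whose magnitude is measured by the H-type deviation, enters the picture.

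Next I would estimate the numerator. By Cauchy--Schwarz and \eqref{eq:op-HS},
$$
\bigl|\langle(\boldJ_{\boldt}^2 + ||\boldt||_v^2\Id)(\boldx),\boldx\rangle_h\bigr| \le \bigl|\bigl|\boldJ_{\boldt}^2 + ||\boldt||_v^2\Id\bigr|\bigr|_{HS}\,||\boldx||_h^2.
$$
For $\boldt\ne 0$, linearity of $\boldJ$ yields $\boldJ_{\boldt} = ||\boldt||_v\,\boldJ_{\boldt/||\boldt||_v}$, hence $\boldJ_{\boldt}^2 + ||\boldt||_v^2\Id = ||\boldt||_v^2\bigl(\boldJ_{\boldt/||\boldt||_v}^2 + \Id\bigr)$, and the definition \eqref{eq:delta0v} of $\delta(\G,g_v)$ bounds the Hilbert--Schmidt norm of the bracketed factor by $\sqrt m\,\delta(\G,g_v)$. (The cases $\boldt = 0$ and $\boldx = 0$ are trivial, since both terms of the identity vanish.) Combining this with the elementary pointwise bound
$$
\frac{16\,||\boldt||_v^2\,||\boldx||_h^2}{N_v^6} = \frac{16\,||\boldt||_v^2\,||\boldx||_h^2}{\bigl(||\boldx||_h^4 + 16||\boldt||_v^2\bigr)^{3/2}} \le \frac{16\,||\boldt||_v^2\,||\boldx||_h^2}{16||\boldt||_v^2\cdot||\boldx||_h^2} = 1,
$$
where I split off one factor $||\boldx||_h^4 + 16||\boldt||_v^2 \ge 16||\boldt||_v^2$ and one half-power factor $\bigl(||\boldx||_h^4 + 16||\boldt||_v^2\bigr)^{1/2} \ge ||\boldx||_h^2$, I obtain $\sup_{0\ne\ttg}|\cdots| \le \sqrt m\,\delta(\G,g_v)$ for every vertical metric $g_v$. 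Taking the infimum over $g_v$ gives \eqref{eq:H-type-stability-1}.

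I do not anticipate a genuine obstacle here: the argument is a short computation resting on Lemma \ref{lem:4}, the skew-symmetry identity, the scaling $\boldJ_{\boldt} = ||\boldt||_v\,\boldJ_{\boldt/||\boldt||_v}$, and the pointwise inequality above. The only point requiring care is the choice of estimates—passing from the bilinear form to the operator norm and then, via \eqref{eq:op-HS}, to the Hilbert--Schmidt norm, which is what permits the definition of $\delta(\G,g_v)$ to be invoked. One should note in passing that the constant $\sqrt m$ is not sharp; the finer estimate $16\,||\boldt||_v^2\,||\boldx||_h^2/N_v^6 \le 2/(3\sqrt3)$ (maximized at $||\boldx||_h^2 = 2\sqrt2\,||\boldt||_v$) would improve it.
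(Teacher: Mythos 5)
Your proposal is correct and follows essentially the same route as the paper's proof: the identity from Lemma \ref{lem:4} rewriting the difference as $16\langle(\boldJ_{\boldt}^2+||\boldt||_v^2\Id)\boldx,\boldx\rangle_h/N_v^6$, the passage to the Hilbert--Schmidt norm via \eqref{eq:op-HS}, the homogeneity $\boldJ_{\boldt}=||\boldt||_v\boldJ_{\boldt/||\boldt||_v}$ to invoke $\delta(\G,g_v)$, and the pointwise bound $16||\boldt||_v^2||\boldx||_h^2\le N_v^6$. Your closing observation that the optimal constant in that last bound is $2/(3\sqrt3)$ rather than $1$ is a correct (if inessential) refinement.
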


\begin{proof}
Fix a vertical metric $g_v$ and fix $\ttg = \exp(\boldx+\boldt) \in \G$, $\ttg \ne 0$. From Lemma \ref{lem:4} we conclude (supressing the argument $\ttg \in \G$ for convenience) that
\begin{equation*}\begin{split}
\frac{||\boldx||_h^2}{N_v^2} - ||\nabla_0 N_v||_h^2
& = \frac{||\boldx||_h^2}{N_v^2} - \frac{||\boldx||_h^6 + 16 ||\boldJ_{\boldt}(\boldx)||_h^2}{N_v^6} \\
& = 16 \, \frac{||\boldx||_h^2 \, ||\boldt||_v^2 - ||J_{\boldt}(\boldx)||_h^2}{N_v^6} \\
& = 16 \, \frac{||\boldx||_h^2 \, ||\boldt||_v^2 + \langle \boldJ_{\boldt}^2(\boldx),\boldx \rangle_h}{N_v^6} \\
& = 16 \, \frac{\langle (\boldJ_{\boldt}^2 + ||\boldt||_v^2 \, \Id)(\boldx),\boldx \rangle_h}{N_v^6} \, .
\end{split}\end{equation*}
We conclude that
\begin{equation*}\begin{split}
\sup_{0 \ne \ttg \in \G} \left| \frac{||\boldx(\ttg)||_h^2}{N_v(\ttg)^2} - ||(\nabla_0 N_v)(\ttg)||_h^2 \right|
& \le 16 \, \frac{||\boldx||_h^2}{N_v^6} \, || \boldJ_{\boldt}^2 + ||\boldt||_v^2 \, \Id ||_{op} \\
& \le 16 \,  \frac{||\boldx||_h^2}{N_v^6} \, || \boldJ_{\boldt}^2 + ||\boldt||_v^2 \, \Id ||_{HS} \\
& \le \sqrt{m} \, \frac{(16||\boldt||_v^2) \, ||\boldx||_h^2}{N_v^6} \, \delta_{0}(\G,g_v) \\
& \le \sqrt{m} \, \delta_0(\G,g_v).
\end{split}\end{equation*}
Taking the infimum over all vertical metrics completes the proof.
\end{proof}

In fact, the quantity on the left hand side of \eqref{eq:H-type-stability-1} is comparable to $\delta(\G)$.

\begin{proposition}\label{prop:H-type-stability-1a}
Let $\G$ be a step two Carnot group, with metric $g_h$ defined in the horizontal layer $\fv_1$ of $\fg$ and $\dim \fv_1 = m$. Then
$$
\delta(\G) \le C \, \inf_{g_v} \sup_{0 \ne \ttg \in \G} \left| \frac{||\boldx(\ttg)||_h^2}{N_v(\ttg)^2} - ||(\nabla_0 N_v)(\ttg)||_h^2 \right\| \, .
$$
Here $C$ denotes an absolute positive constant.
\end{proposition}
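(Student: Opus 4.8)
The plan is to run the estimates in the proof of Proposition~\ref{prop:H-type-stability-1} in reverse. The engine of that proof was the pointwise identity
$$
\frac{||\boldx||_h^2}{N_v^2} - ||\nabla_0 N_v||_h^2 = 16 \, \frac{\langle (\boldJ_{\boldt}^2 + ||\boldt||_v^2\,\Id)(\boldx),\boldx\rangle_h}{N_v^6},
$$
valid at \emph{every} point $\ttg = \exp(\boldx+\boldt)\ne 0$. Since $\boldx$ and $\boldt$ can be prescribed independently via the exponential coordinates \eqref{eq:exp}, I would probe this identity at points specially chosen so as to recover complete information about the self-adjoint operators $(\boldJ_T^{[g_v]})^2 + \Id$.

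Concretely, fix a vertical metric $g_v$, a $g_v$-unit vector $T\in\fv_2$, a $g_h$-unit vector $U\in\fv_1$, and scalars $r,s>0$, and specialize the identity to $\ttg = \exp(rU+sT)$, so that $\boldx(\ttg)=rU$ and $\boldt(\ttg)=sT$. Using the linearity $\boldJ_{sT}^{[g_v]}=s\boldJ_T^{[g_v]}$ together with $N_v(\ttg)^4=r^4+16s^2$, the right-hand side collapses to
$$
\frac{16\,r^2s^2}{(r^4+16s^2)^{3/2}}\,\langle\bigl((\boldJ_T^{[g_v]})^2+\Id\bigr)(U),U\rangle_h.
$$
The scalar prefactor is invariant under the scaling $(r,s)\mapsto(\lambda r,\lambda^2 s)$, so it may be optimized freely; a one-variable calculus computation shows that its supremum over $r,s>0$ equals the dimension-free constant $c_0 = \tfrac{2\sqrt3}{9}>0$ (attained when $r^4=8s^2$). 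Choosing $r,s$ to realize this supremum therefore gives
$$
\sup_{0\ne\ttg\in\G}\Bigl|\,\tfrac{||\boldx(\ttg)||_h^2}{N_v(\ttg)^2}-||(\nabla_0 N_v)(\ttg)||_h^2\,\Bigr| \;\ge\; c_0\,\bigl|\langle((\boldJ_T^{[g_v]})^2+\Id)(U),U\rangle_h\bigr|
$$
for all unit vectors $T$ and $U$.

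Next I would use that $(\boldJ_T^{[g_v]})^2+\Id$ is self-adjoint on $(\fv_1,g_h)$, since $\boldJ_T^{[g_v]}$ is skew-symmetric; hence supremizing the numerical form over unit $U$ recovers the operator norm, $\sup_{||U||_h=1}|\langle((\boldJ_T^{[g_v]})^2+\Id)(U),U\rangle_h| = ||(\boldJ_T^{[g_v]})^2+\Id||_{op}$. Combining this with $||\cdot||_{HS}\le\sqrt m\,||\cdot||_{op}$ from \eqref{eq:op-HS} and then taking the supremum over unit $T$ yields
$$
c_0\,\delta(\G,g_v) = \frac{c_0}{\sqrt m}\sup_{||T||_v=1}||(\boldJ_T^{[g_v]})^2+\Id||_{HS} \;\le\; \sup_{0\ne\ttg\in\G}\Bigl|\,\tfrac{||\boldx(\ttg)||_h^2}{N_v(\ttg)^2}-||(\nabla_0 N_v)(\ttg)||_h^2\,\Bigr|.
$$
Taking the infimum over all vertical metrics $g_v$ then produces the claimed inequality with $C=1/c_0=\tfrac{3\sqrt3}{2}$.

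I do not expect a genuine obstacle here: the argument is essentially the reversal of Proposition~\ref{prop:H-type-stability-1} together with the elementary fact that the numerical radius of a self-adjoint operator equals its operator norm, plus an elementary scalar optimization. The one point that needs attention is confirming that $C$ is truly \emph{absolute} --- independent of $m=\dim\fv_1$ and of $\dim\fv_2$. This works precisely because the factor $\sqrt m$ lost in passing from $||\cdot||_{op}$ to $||\cdot||_{HS}$ is exactly cancelled by the normalization $(\dim\fv_1)^{-1/2}$ in the definition of $\delta(\G)$, while the optimization of the scalar prefactor contributes only the dimension-free constant $c_0$.
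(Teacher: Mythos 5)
Your proposal is correct and follows essentially the same route as the paper: both exploit the pointwise identity from Proposition \ref{prop:H-type-stability-1}, test it at points $\exp(\boldx+\boldt)$ with $\boldt$ a unit vector and $\boldx$ chosen to extremize the quadratic form of the self-adjoint operator $(\boldJ_T^{[g_v]})^2+\Id$ (eigenvector of largest absolute eigenvalue, equivalently numerical radius equals operator norm), optimize the scalar prefactor over the radial parameter to get the constant $3\sqrt{3}/2$, pass from $\|\cdot\|_{op}$ to $\|\cdot\|_{HS}$ at the cost of $\sqrt{m}$ which cancels against the normalization in $\delta(\G,g_v)$, and finally take the infimum over $g_v$. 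The constant and the optimizing point ($L^4=8$ when $\|\boldt\|_v=1$) match the paper exactly.
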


\begin{proof}[Proof of Proposition \ref{prop:H-type-stability-1a}]
It suffices to prove, for each fixed metric $g_v$ on the vertical layer $\fv_2$, that
\begin{equation}\label{eq:H-type-stability-1a-equation}
\delta(\G,g_v) \le C \sup_{0 \ne \ttg=\exp(\boldx+\boldt) \in \G} \left| \frac{||\boldx(\ttg)||_h^2}{N_v(\ttg)^2} - ||(\nabla_0 N_v)(\ttg)||_h^2 \right| \, .
\end{equation}
Noting that
$$
\ttg \mapsto \frac{||\boldx(\ttg)||_h^2}{N_v(\ttg)^2} - ||(\nabla_0 N_v)(\ttg)||_h^2
$$
is $0$-homogeneous, we conclude that it is equivalent to prove the estimate
$$
\delta(\G,g_v) \le C \sup_{\substack{\ttg=\exp(\boldx+\boldt) \\ ||\boldt||_v = 1}} \left| \frac{||\boldx(\ttg)||_h^2}{N_v(\ttg)^2} - ||(\nabla_0 N_v)(\ttg)||_h^2 \right| \,.
$$
The proof of Proposition \ref{prop:H-type-stability-1} shows that
\begin{equation}\label{eq:H-type-stability-1a-1}
\left| \frac{||\boldx||_h^2}{N_v^2} - ||\nabla_0 N_v||_h^2 \right| = 16 \left| \frac{\langle (\boldJ_\boldt^2 + \Id)(\boldx),\boldx \rangle_h}{N_v^6} \right|
\end{equation}
for any $\ttg = \exp(\boldx+\boldt) \in \G$ with $||\boldt||_v = 1$.

For fixed $\boldt \in \fv_2$ with $||\boldt||_v = 1$ and for fixed $L>0$, let $\boldx = \boldx_\boldt$ be an element of $\fv_1$ with $||\boldx||_h = L$, so that $\boldx$ is an eigenvector of $\boldJ_\boldt^2 + \Id$ with maximal eigenvalue. We will choose a suitable value for $L$ momentarily. We observe that $|\langle (\boldJ_\boldt^2 + \Id)(\boldx),\boldx \rangle_h| = ||\boldJ_\boldt^2 + \Id||_{op} \cdot ||\boldx_\boldt||_h^2$. For such a choice of $\boldx$ and $\boldt$, we rewrite \eqref{eq:H-type-stability-1a-1} as
$$
\left| \frac{||\boldx_\boldt||_h^2}{N_v^2} - ||\nabla_0 N_v||_h^2 \right| = 16 \frac{||\boldx_\boldt||_h^2}{N_v^6} ||\boldJ_\boldt^2 + \Id||_{op}.
$$
Hence
\begin{equation*}\begin{split}
\delta(\G,g_v) 
= \frac1{\sqrt{m}} \sup_{\substack{\boldt \\ ||\boldt||_v=1}} ||\boldJ_\boldt^2+\Id||_{HS}
&\le \sup_{\substack{\boldt \\ ||\boldt||_v=1}} ||\boldJ_\boldt^2+\Id||_{op} \\
&= \sup_{\substack{ \boldt \\ ||\boldt||_v=1}} \frac{N_v(\boldx_\boldt,\boldt)^6}{16||\boldx_t||_h^2} \, \left| \frac{||\boldx_t||_h^2}{N_v(\boldx_\boldt,\boldt)^2} - ||(\nabla_0 N_v)(\boldx_\boldt,\boldt)||_h^2 \right| \,.
\end{split}\end{equation*}
Since $||\boldx_\boldt||_h = L$ and $N_v(\boldx,\boldt) = (||\boldx||_h^4 + 16||\boldt||_v^2)^{1/4}$, we obtain
$$
\delta(\G,g_v)  \le \frac{(L^4+16)^{3/2}}{16L^2} \, \sup_{\substack{ \boldt \\ ||\boldt||_v=1}}\left| \frac{||\boldx_\boldt||_h^2}{N_v(\boldx_\boldt,\boldt)^2} - ||(\nabla_0 N_v)(\boldx_\boldt,\boldt)||_h^2 \right| \,.
$$
We now choose the value of $L>0$ to minimize the expression on the right hand side. The function $h(L) = (L^4+16)^{3/2}/16L^2$ achieves its minimum over $0<L<\infty$ when $L=2^{3/4}$, and $h(2^{3/4}) = 3\sqrt{3}/2 =: C$. Hence
\begin{equation*}\begin{split}
\delta(\G,g_v) 
\le C \sup_{\substack{\boldt \\ ||\boldt||_v =1 }} \left| \frac{||\boldx_\boldt||_h^2}{N_v(\boldx_\boldt,\boldt)^2} - ||(\nabla_0 N_v)(\boldx_\boldt,\boldt)||_h^2 \right| 
\le C \sup_{\boldx,\boldt} \left| \frac{||\boldx||_h^2}{N_v(\boldx,\boldt)^2} - ||(\nabla_0 N_v)(\boldx,\boldt)||_h^2 \right| \\
\end{split}\end{equation*}
which completes the proof of \eqref{eq:H-type-stability-1a-equation}.
\end{proof}

Theorem \ref{thm:main-theorem-1} is an immediate corollary of the preceding proposition.

\begin{proof}[Proof of Theorem \ref{thm:main-theorem-1}]
Let $\G$ be a step two Carnot group satisfying the assumptions of Theorem \ref{thm:main-theorem-1} for some vertical metric $g_v$. Proposition \ref{prop:H-type-stability-1a} implies that $\delta(\G) = 0$. By Proposition \ref{prop:delta_0-characterization-of-nascent-H-type-groups}, the group $\G$ equipped with the taming Riemannian metric $g_h \oplus g_v$ is an H-type group. 
\end{proof}

\subsection{Characterizing H-type groups via the sub-Laplacian of Kaplan's quasinorm}

We now derive similar two-sided stability estimates for 
$$
\cL(N_v^{2-Q}),
$$
where $N_v$ denotes Kaplan's quasinorm. Theorem \ref{thm:main-theorem-2} follows easily from these estimates.

\begin{proposition}\label{prop:H-type-stability-2}
Let $\G$ be a step two Carnot group, with metric $g_h$ defined in the horizontal layer $\fv_1$ of $\fg$ and $\dim \fv_1 = m$. Then
$$
\frac1C \, \delta(\G) \le \inf_{g_v} \sup_{0 \ne \ttg \in \G} \left| N_v(\ttg) \, (\cL N_v)(\ttg) - (Q-1) ||(\nabla_0 N_v)(\ttg)||_h^2 \right| \le C \, \delta(\G)
$$
for some constant $C = C(m,m_2)$ depending only on the dimensions $m = \dim \fv_1$ and $m_2 = \dim \fv_2$.
\end{proposition}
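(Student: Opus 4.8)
The plan is to reduce both inequalities to a single exact identity for $N_v \cL N_v - (Q-1)||\nabla_0 N_v||_h^2$. First, for a fixed $g_v$-orthonormal basis $T_1,\dots,T_{m_2}$ of $\fv_2$, I would use Lemmas \ref{lem:0}--\ref{lem:4} together with the elementary rule $\cL(\phi(u)) = \phi'(u)\,\cL u + \phi''(u)\,||\nabla_0 u||_h^2$ to compute
\begin{equation*}
\cL a_v = 4(m+2)\,||\boldx||_h^2 + 8\sum_{q=1}^{m_2} ||\boldJ_{T_q}(\boldx)||_h^2, \qquad ||\nabla_0 a_v||_h^2 = 16\bigl(||\boldx||_h^6 + 16\,||\boldJ_{\boldt}(\boldx)||_h^2\bigr),
\end{equation*}
where the first formula uses $\cL(||\boldx||_h^4) = 4(m+2)||\boldx||_h^2$ (from Lemmas \ref{lem:0}--\ref{lem:1}) and $\cL(||\boldt||_v^2) = \tfrac12\sum_q ||\boldJ_{T_q}(\boldx)||_h^2$ (from Lemma \ref{lem:3} and the skew-symmetry of $\boldJ_{T_q}$), and the second is Lemma \ref{lem:4}. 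Since $N_v = a_v^{1/4}$, this already gives
\begin{equation*}
N_v \cL N_v - (Q-1)||\nabla_0 N_v||_h^2 = \tfrac14 a_v^{-1/2}\,\cL a_v - \tfrac{Q+2}{16}\,a_v^{-3/2}\,||\nabla_0 a_v||_h^2.
\end{equation*}

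The heart of the argument is to massage the last display into the master identity
\begin{equation*}
N_v \cL N_v - (Q-1)||\nabla_0 N_v||_h^2 = -\frac{2\,\langle \boldM\boldx,\boldx\rangle_h}{N_v^2} + (Q+2)\left( \frac{||\boldx||_h^2}{N_v^2} - ||\nabla_0 N_v||_h^2 \right), \qquad \boldM := \sum_{q=1}^{m_2}\bigl(\boldJ_{T_q}^2 + \Id\bigr).
\end{equation*}
One first checks that $\boldM$ does not depend on the choice of $g_v$-orthonormal basis (an orthogonal change of basis leaves $\sum_q \boldJ_{T_q}^2$ invariant). Substituting $\sum_q ||\boldJ_{T_q}(\boldx)||_h^2 = m_2 ||\boldx||_h^2 - \langle \boldM\boldx,\boldx\rangle_h$ and $||\boldJ_{\boldt}(\boldx)||_h^2 = ||\boldt||_v^2\,||\boldx||_h^2 - \langle (\boldJ_{\boldt}^2 + ||\boldt||_v^2\Id)\boldx,\boldx\rangle_h$ into the previous display, and using $m + 2m_2 + 2 = Q+2$ together with $a_v^{3/2} = N_v^6$, the ``main'' (H-type) terms cancel identically; the surviving vertical term is exactly $16\,\langle (\boldJ_{\boldt}^2 + ||\boldt||_v^2\Id)\boldx,\boldx\rangle_h / N_v^6$, which by Proposition \ref{prop:H-type-stability-1} equals $||\boldx||_h^2/N_v^2 - ||\nabla_0 N_v||_h^2$, yielding the master identity.

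From the master identity both bounds follow. For the upper bound: since $||\boldx||_h^2 \le N_v^2$ always, $|\langle \boldM\boldx,\boldx\rangle_h|/N_v^2 \le ||\boldM||_{op} \le \sum_q ||\boldJ_{T_q}^2 + \Id||_{HS} \le m_2 \sqrt{m}\,\delta(\G,g_v)$ (each $T_q$ being $g_v$-unit, by \eqref{eq:op-HS} and the definition of $\delta(\G,g_v)$), while the second term is at most $(Q+2)\sqrt{m}\,\delta(\G,g_v)$ by the estimate established inside the proof of Proposition \ref{prop:H-type-stability-1}; summing and taking $\inf_{g_v}$ gives the right-hand inequality with $C = C(m,m_2)$. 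For the lower bound, rather than estimating $\langle \boldM\boldx,\boldx\rangle_h$ from below (it is a sum, not a supremum, over vertical directions and may exhibit cancellation), I would evaluate the master identity at a point $\ttg = \exp(\boldx)$ with $\boldt(\ttg) = 0$: the second term vanishes and the left side equals $-2\langle \boldM\boldx,\boldx\rangle_h/||\boldx||_h^2$, so taking $\boldx$ a top eigenvector of $\boldM$ gives $\sup_{\ttg}|N_v\cL N_v - (Q-1)||\nabla_0 N_v||_h^2| \ge 2||\boldM||_{op}$. Reinserting this into the master identity (again using $||\boldx||_h^2 \le N_v^2$) produces
\begin{equation*}
\sup_{\ttg}\left| \frac{||\boldx||_h^2}{N_v^2} - ||\nabla_0 N_v||_h^2 \right| \le \frac{2}{Q+2}\,\sup_{\ttg}\left| N_v\cL N_v - (Q-1)||\nabla_0 N_v||_h^2 \right|,
\end{equation*}
and then Proposition \ref{prop:H-type-stability-1a} (with its absolute constant) bounds $\delta(\G,g_v)$ by a multiple of the right-hand side; taking $\inf_{g_v}$ completes the lower bound.

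The main obstacle is entirely in the second step: identifying the defect operator $\boldM$, verifying its basis-independence, and carrying out the cancellation of the main terms --- this is where the specific constant $16$ and exponent $2-Q$ built into Kaplan's quasinorm and fundamental solution make the expression collapse. Everything else is routine manipulation with Lemmas \ref{lem:0}--\ref{lem:4} and the norm estimate \eqref{eq:op-HS}, together with the small bootstrapping observation in the lower bound that lets us invoke Proposition \ref{prop:H-type-stability-1a} directly instead of re-analyzing the defect term $\langle \boldM\boldx,\boldx\rangle_h$.
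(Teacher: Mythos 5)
Your proof is correct, and your master identity is exactly the paper's identity \eqref{eq:NLN-identity} rewritten with the defect operator $\boldM = \sum_q (\boldJ_{T_q}^2+\Id)$ made explicit; the derivation via $\cL a_v$ and $N_v = a_v^{1/4}$, and the upper bound that follows from it, coincide with the paper's argument. Where you genuinely diverge is the lower bound. The paper works at points with $||\boldt||_v=1$, chooses $\boldt$ maximizing $||\boldJ_\boldt^2+\Id||_{op}$ and $\boldx_\boldt$ a top eigenvector of prescribed length $L$, and then optimizes the resulting function $h(L) = 16(Q+2)L^2(L^4+16)^{-3/2} - 2m_2L^2(L^4+16)^{-1/2}$ over $L$, checking that its maximum $h(L_0)=2(m+2)^{3/2}/(3\sqrt{3}\sqrt{Q+2})$ is positive so that the competing $\boldM$-term cannot swamp the main term. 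You instead evaluate the identity on the slice $\boldt=0$, where the eikonal-defect term vanishes identically and only $-2\langle\boldM\boldx,\boldx\rangle_h/||\boldx||_h^2$ survives, extract $2||\boldM||_{op} \le \sup_\ttg|N_v\cL N_v-(Q-1)||\nabla_0 N_v||_h^2|$, feed that back into the identity to control the eikonal defect by $\tfrac{2}{Q+2}$ times the same supremum, and then invoke the per-metric inequality \eqref{eq:H-type-stability-1a-equation} from the proof of Proposition \ref{prop:H-type-stability-1a} to pass to $\delta(\G,g_v)$. This is a clean bootstrapping argument: it avoids the $L$-optimization entirely and never needs a lower bound on $\langle\boldM\boldx,\boldx\rangle_h$ (which, as you note, could cancel), at the mild cost of routing through Proposition \ref{prop:H-type-stability-1a} rather than being self-contained; both routes yield a constant depending only on $m$ and $m_2$, as required. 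One small point of care: the statement of Proposition \ref{prop:H-type-stability-1a} carries an $\inf_{g_v}$, so you do need the fixed-metric estimate \eqref{eq:H-type-stability-1a-equation} established inside its proof, as you correctly indicate.
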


As before, we note that the expression $N_v \, \cL N_v - (Q-1) ||\nabla_0 N_v||_h^2$ is $0$-homogeneous.

\begin{proof}[Proof of Proposition \ref{prop:H-type-stability-2}]
We again fix a vertical metric $g_v$. First, we show that
\begin{equation}\label{eq:H-type-stability-2a}
\sup_{0 \ne \ttg \in \G} \left| N_v(\ttg) \, (\cL N_v)(\ttg) - (Q-1) ||(\nabla_0 N_v)(\ttg)||_h^2 \right| \le C \, \delta(\G,g_v) \, .
\end{equation}
The following identities hold for all points $\ttg \in \G$, $\ttg \ne 0$, but we omit the argument $\ttg$ for ease of exposition. From Lemma \ref{lem:0} we conclude that $\cL(b) = 2m$ and hence
$$
\cL(b^2) = 2 b \, \cL(b) + 2 ||\nabla_0 b||_h^2 = 4m \, b + 8 \, b = 4(m+2)b.
$$
Moreover,
$$
\cL(c_v) = \frac12 \sum_{i=1}^m \sum_{q=1}^{m_2} (\beta_i^q)^2 = \frac12 \sum_{q=1}^{m_2} ||\boldJ_{T_q}(\boldx)||_h^2
$$
for some choice of a $g_v$-orthonormal basis $\varepsilon_1,\ldots,\varepsilon_{m_2}$ for $\fv_2$. Thus
\begin{equation*}\begin{split}
\cL a_v = \cL(b^2 + 16 c_v) & = 4(m+2) b + 8 \sum_{q=1}^{m_2} ||\boldJ_{\varepsilon_q}(\boldx)||_h^2 \\
& = 4(m+2) b - 8 \sum_{q=1}^{m_2} \langle \boldJ_{\varepsilon_q}^2(\boldx),\boldx \rangle_h \\
& = 4(Q+2) b - 8 \sum_{q=1}^{m_2} \langle (\boldJ_{\varepsilon_q}^2 + \Id) (\boldx),\boldx \rangle_h \\
\end{split}\end{equation*}
where we used the facts that $b = ||\boldx||_h^2$ and $Q = m+2m_2$.

Next, since $a_v = N_v^4$, we have
$$
\cL a_v = 4 N_v^3 \, \cL N_v + 12 N_v^2 \, ||\nabla_0 N_v||_h^2.
$$
Using Lemma \ref{lem:4} and the preceding expression for $\cL a_v$ we conclude that
\begin{equation}\label{eq:NLN-identity}
N_v \, \cL N_v - (Q-1) \, ||\nabla_0 N_v||_h^2 = 16 (Q+2) \frac{\langle (\boldJ_{\boldt}^2 + ||\boldt||_v^2 \, \Id)\boldx,\boldx \rangle_h}{N_v^6} - 2 \frac{\sum_{q=1}^{m_2} \langle (\boldJ_{\varepsilon_q}^2 + \Id)\boldx,\boldx \rangle_h}{N_v^2} \, .
\end{equation}
Thus
\begin{equation*}\begin{split}
\Bigl| N_v \, \cL N_v - (Q-1) \, ||\nabla_0 N_v||_h^2 \Bigr| 
&\le 16 (Q+2) \sqrt{m} \delta(\G,g_v) \, \frac{||\boldt||_v^2 \, ||\boldx||_h^2}{N_v^6} + 2m_2 \sqrt{m} \delta(\G,g_v) \, \frac{||\boldx||_h^2}{N_v^2} \\
&\le C(m,m_2) \delta(\G,g_v)
\end{split}\end{equation*}
since $||\boldx||_h \le \boldN_v$ and $4||\boldt||_v \le \boldN_v^2$. This completes the proof of \eqref{eq:H-type-stability-2a}.

Next, we will show that
\begin{equation}\label{eq:H-type-stability-2b}
\delta(\G,g_v) \le C \, \sup_{0 \ne \ttg \in \G} \left| N_v(\ttg) \, (\cL N_v)(\ttg) - (Q-1) ||(\nabla_0 N_v)(\ttg)||_h^2 \right| \, .
\end{equation}
To this end, we return to \eqref{eq:NLN-identity}, which we consider for points $\ttg = \exp(\boldx + \boldt)$ with $||\boldt||_v = 1$:
\begin{equation*}
(N_v \, \cL N_v - (Q-1) \, ||\nabla_0 N_v||_h^2)(\boldx,\boldt) = 16 (Q+2) \frac{\langle (\boldJ_{\boldt}^2 + \Id)\boldx,\boldx \rangle_h}{N_v(\boldx,\boldt)^6} - 2 \frac{\sum_{q=1}^{m_2} \langle (\boldJ_{\varepsilon_q}^2 + \Id)\boldx,\boldx \rangle_h}{N_v(\boldx,\boldt)^2} \, .
\end{equation*}
We select $\boldt \in \fv_2$ with $||\boldt||_v = 1$ so that
$$
||\boldJ_\boldt^2 + \Id||_{op} = \sup_{\substack{T \in \fv_2 \\ ||T||_v = 1}} ||\boldJ_T^2 + \Id||_{op}
$$
As before, we fix $L>0$ (the choice of $L$ will be determined momentarily) and we let $\boldx = \boldx_\boldt$ be an element of $\fv_1$ with $||\boldx||_h = L$, so that $\boldx$ is an eigenvector of $\boldJ_\boldt^2 + \Id$ with maximal eigenvalue. Then $|\langle (\boldJ_\boldt^2 + \Id)(\boldx),\boldx \rangle_h| = ||\boldJ_\boldt^2 + \Id||_{op} \cdot ||\boldx_\boldt||_h^2$ and we obtain
\begin{equation*}\begin{split}
(N_v \, \cL N_v - 
&(Q-1) \, ||\nabla_0 N_v||_h^2)(\boldx_\boldt,\boldt) \\
&\ge 16 (Q+2) \frac{||\boldJ_{\boldt}^2 + \Id||_{op} ||\boldx||_h^2}{N_v(\boldx,\boldt)^6} - 2 \frac{\sum_{q=1}^{m_2} |\langle (\boldJ_{\varepsilon_q}^2 + \Id)\boldx,\boldx \rangle_h|}{N_v(\boldx,\boldt)^2} \\
&\ge 16 (Q+2) \frac{||\boldJ_{\boldt}^2 + \Id||_{op} ||\boldx||_h^2}{N_v(\boldx,\boldt)^6} - 2m_2 \left( \sup_{\substack{T \in \fv_2 \\ ||T||_v = 1}} ||\boldJ_T^2 + \Id||_{op} \right)
 \frac{||\boldx_\boldt||_h^2}{N_v(\boldx,\boldt)^2} \\
&= \left( \frac{16 (Q+2) L^2}{(L^4+16)^{3/2}} - \frac{2m_2 L^2}{\sqrt{L^4+16}} \right) \sup_{\substack{T \in \fv_2 \\ ||T||_v = 1}} ||\boldJ_T^2 + \Id||_{op} \\
&\ge \frac1{\sqrt{m}} \left( \frac{16 (Q+2) L^2}{(L^4+16)^{3/2}} - \frac{2m_2 L^2}{\sqrt{L^4+16}} \right) \sup_{\substack{T \in \fv_2 \\ ||T||_v = 1}} ||\boldJ_T^2 + \Id||_{HS} \, .
\end{split}\end{equation*}
The expression
$$
h(L) = \frac{16 (Q+2) L^2}{(L^4+16)^{3/2}} - \frac{2m_2 L^2}{\sqrt{L^4+16}}
$$
is maximized over $0<L<\infty$ when
$$
L = L_0 := \frac{2^{3/4}(m+2)^{1/4}}{(Q+2+m_2)^{1/4}},
$$
and
$$
h(L_0) = \frac{2(m+2)^{3/2}}{3\sqrt{3}\sqrt{Q+2}}.
$$
(Recall that $Q = m + 2m_2$.) Hence
\begin{equation*}\begin{split}
\delta(\G,g_v) 
&= \frac1{\sqrt{m}} \, \sup_{\substack{T \in \fv_2 \\ ||T||_v = 1}} ||\boldJ_T^2 + \Id||_{HS} \\
&\le  C \, \sup_{\substack{\boldt \\ ||\boldt||_v = 1}}  \left| (N_v \, \cL N_v - (Q-1) \, ||\nabla_0 N_v||_h^2)(\boldx_\boldt,\boldt) \right| \\
&\le C \, \sup_{\substack{\boldx,\boldt}}  \left| (N_v \, \cL N_v - (Q-1) \, ||\nabla_0 N_v||_h^2)(\boldx,\boldt) \right| \\
\end{split}\end{equation*}
with
$$
C = C(m,m_2) = \frac{3\sqrt{3(Q+2)}}{2(m+2)^{3/2}}.
$$
This completes the proof of \eqref{eq:H-type-stability-2b}.
\end{proof}

Using the identity
$$
\cL u_v(\ttg) = (2-Q) N_v(\ttg)^{1-Q} (\cL N_v)(\ttg) + (2-Q)(1-Q) N_v(\ttg)^{-Q} ||(\nabla_0 N_v(\ttg)||_h^2,
$$
valid at all points $\ttg \in \G$, $\ttg \ne 0$, we deduce the following corollary.

\begin{corollary}\label{cor:H-type-stability-1b}
Under the assumptions of the previous theorem, with $u_v := N_v^{2-Q}$, we have
$$
\frac1C \delta(\G) \le C \inf_{g_v} \sup_{0 \ne \ttg \in \G} N_v(\ttg)^Q | (\cL u_v)(\ttg)| \le C \delta(\G) \, .
$$
\end{corollary}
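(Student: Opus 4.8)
The plan is to reduce the statement directly to Proposition \ref{prop:H-type-stability-2} by means of the displayed identity relating $\cL u_v$ to $N_v \cL N_v$ and $\|\nabla_0 N_v\|_h^2$. The only work involved is bookkeeping with the constant $2-Q$; no new geometric input is needed.

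Fix a vertical metric $g_v$ and a point $\ttg \ne 0$, and multiply the identity
$$
\cL u_v = (2-Q) N_v^{1-Q}\, \cL N_v + (2-Q)(1-Q) N_v^{-Q}\, \|\nabla_0 N_v\|_h^2
$$
through by $N_v(\ttg)^Q$. Using $(2-Q)(1-Q) = (Q-2)(Q-1)$ and $2-Q = -(Q-2)$, this gives
$$
N_v^Q\, \cL u_v = -(Q-2)\Bigl[\, N_v\, \cL N_v - (Q-1)\,\|\nabla_0 N_v\|_h^2 \,\Bigr],
$$
and hence, taking absolute values,
$$
\bigl| N_v(\ttg)^Q\, (\cL u_v)(\ttg)\bigr| = (Q-2)\,\bigl| N_v(\ttg)\,(\cL N_v)(\ttg) - (Q-1)\,\|(\nabla_0 N_v)(\ttg)\|_h^2\bigr| .
$$
Taking the supremum over $\ttg \ne 0$ and then the infimum over all vertical metrics $g_v$ yields
$$
\inf_{g_v}\sup_{0 \ne \ttg \in \G} \bigl| N_v(\ttg)^Q\, (\cL u_v)(\ttg)\bigr| = (Q-2)\,\inf_{g_v}\sup_{0 \ne \ttg \in \G}\bigl| N_v(\ttg)\,(\cL N_v)(\ttg) - (Q-1)\,\|(\nabla_0 N_v)(\ttg)\|_h^2\bigr| .
$$

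Since $\G$ is a genuine step two group we have $m \ge 2$ and $m_2 \ge 1$, so $Q = m + 2m_2 \ge 4$ and the factor $Q-2$ is a positive constant depending only on the dimensions $m$ and $m_2$. Combining the previous display with the two-sided estimate of Proposition \ref{prop:H-type-stability-2} and absorbing $Q-2$ into the constant $C = C(m,m_2)$ gives
$$
\frac1C\,\delta(\G) \le \inf_{g_v}\sup_{0 \ne \ttg \in \G} N_v(\ttg)^Q\,\bigl|(\cL u_v)(\ttg)\bigr| \le C\,\delta(\G),
$$
as claimed. The argument is entirely mechanical; the only point requiring care is the sign and the value of the prefactor $2-Q$ appearing in the passage from $u_v$ to $N_v$, and the observation that this prefactor is a nonzero constant of the expected size.
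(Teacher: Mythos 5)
Your proof is correct and follows exactly the route the paper intends: multiply the stated identity for $\cL u_v$ by $N_v^Q$ to see that $N_v^Q\,\cL u_v = -(Q-2)\bigl[N_v\,\cL N_v - (Q-1)\|\nabla_0 N_v\|_h^2\bigr]$, then invoke Proposition \ref{prop:H-type-stability-2} and absorb the positive factor $Q-2$ into the constant. The paper leaves this bookkeeping implicit, so your write-up is simply a fuller version of the same argument.
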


\begin{proof}[Proof of Theorem \ref{thm:main-theorem-2}]
Let $\G$ be a step two Carnot group satisfying the assumptions of Theorem \ref{thm:main-theorem-2} for some vertical metric $g_v$. Corollary \ref{cor:H-type-stability-1b} implies that $\delta_M(\G) = 0$. By Proposition \ref{prop:delta_0-characterization-of-nascent-H-type-groups}, $\G$ equipped with the metric $g_h \oplus g_v$ is of H-type. 
\end{proof}

\section{Polarizability}\label{sec:polarizability}

Polarizable Carnot groups were introduced in \cite{bt:polar}. This class of Carnot groups admits a coherent family of singular solutions for all of the $p$-Laplacian operators, and also admits a polar coordinate integration formula for which the radial curves are horizontal. See the introduction for further information.

\begin{definition}[Balogh--Tyson]\label{def:polarizable}
Let $\G$ be a Carnot group of homogeneous dimension $Q \ge 3$, and let $u$ be Folland's fundamental solution for the sub-Laplacian in $\G$. Then $\G$ is {\bf polarizable} if the function $u^{1/(2-Q)}$ is $\infty$-harmonic in the complement of $0 \in \G$.
\end{definition}

In \cite{bt:polar} we showed that all H-type groups are polarizable, and explicitly computed the horizontal polar coordinate integration formula in that setting. We conjectured that H-type groups are the only examples of polariable groups. Here we restate that conjecture in the language introduced in this paper.

\begin{conjecture}\label{conj:full-polarizability-conjecture}
Let $\G$ be a polarizable Carnot group. Then $\G$ has step two, and $\G$ is a nascent H-type group.
\end{conjecture}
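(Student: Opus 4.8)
The statement comprises two assertions: that a polarizable Carnot group is necessarily of step two, and that a step two polarizable group is nascent H-type. Only the second is within reach of the machinery assembled above, and I would build the attack around it. The plan for the step two case is to convert polarizability into the vanishing of the H-type deviation by way of Conjecture~\ref{conj:polarizability-conjecture}. Granting its comparability statement $\sup_{\ttg\in\Sigma}|N(\ttg)(\cL_\infty N)(\ttg)|\simeq\delta(\G)^2$, the conclusion is immediate: if $\G$ is polarizable then, by Definition~\ref{def:polarizable}, $N=u^{1/(2-Q)}$ is $\infty$-harmonic on the complement of $0$, so in particular $\cL_\infty N$ vanishes on $\Sigma\setminus\{0\}$; hence the left-hand side is $0$, so $\delta(\G)=0$, and Proposition~\ref{prop:delta_0-characterization-of-nascent-H-type-groups} supplies a taming vertical metric realizing $\G$ as an H-type group. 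Thus the whole weight of the step two case rests on the lower bound $\delta(\G)^2\le C\sup_{\ttg\in\Sigma}|N\cL_\infty N|$ (and, for the sharp form stated in Conjecture~\ref{conj:polarizability-conjecture}, on the matching upper bound).

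To establish the comparison I would start from the Beals--Gaveau--Greiner representation~\eqref{eq:bgg-formula} of Folland's fundamental solution. On $\Sigma$, where $\boldt=0$, the generalized Hamilton--Jacobi and transport data degenerate, and one should be able to expand $u$, and with it Kaplan's quasinorm $N$, against the H-type model: writing $N=N_v(1+\eta)$ with $N_v$ the Kaplan quasinorm of an optimal vertical metric $g_v$ (which exists by the remark following Proposition~\ref{prop:delta_0-characterization-of-nascent-H-type-groups}), one expects $\eta$ to be controlled by $\delta(\G)$ near the H-type locus. Computing $\cL_\infty N$ on $\Sigma$ via the horizontal gradient and Hessian identities of Section~\ref{sec:Htype} (Lemmas~\ref{lem:0}--\ref{lem:4}), the leading behaviour should be governed by the operator $\boldJ_\boldt^2+||\boldt||_v^2\,\Id$ --- precisely the object measured by $\delta(\G)$. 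The structural reason for the square in Conjecture~\ref{conj:polarizability-conjecture} is that on $\Sigma$ the \emph{first}-order contribution of this operator vanishes (since $\boldt=0$ there), so that the leading surviving term of $N\,\cL_\infty N$ is quadratic in the structure constants; tracking this cancellation and bounding it above and below by $\delta(\G)^2$ --- using extremal test points as in the proofs of Propositions~\ref{prop:H-type-stability-1a} and~\ref{prop:H-type-stability-2} --- would yield both inequalities when $\delta(\G)$ is small. The complementary regime, $\delta(\G)$ bounded away from zero, would have to be treated separately, e.g.\ by a direct lower bound showing $\sup_{\ttg\in\Sigma}|N\cL_\infty N|\ge c(\epsilon_0)>0$ on the set of step two groups with $\delta(\G)\ge\epsilon_0$.

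The first main obstacle I anticipate is uniformity. Beyond the H-type groups and the anisotropic Heisenberg family $\Heis^n(\tfrac12,1,\ldots,1)$, for which Theorem~\ref{th:anisotropic-Heis-groups} serves as a model computation, there is no closed form for the fundamental solution, so the expansion of~\eqref{eq:bgg-formula} --- and, more delicately, of its horizontal \emph{second} derivatives --- must be carried through by hand in terms of the matrix $\boldOmega(\tau)$ and the structure constants, and the $\delta(\G)^2$-scale cancellation on $\Sigma$ must be exhibited rather than merely asserted. This is where I expect most of the analytic work to lie. The second, and in my view harder, obstacle is the step reduction, which the present circle of ideas does not touch at all: for step at least three $\delta(\G)$ is not even defined, and one would instead have to argue that the horizontal polar curves $\gamma_\tta$ of~\eqref{eq:polar} --- equivalently the gradient flow of $N$ --- are obstructed by the abnormal stratum of the horizontal distribution in higher step, or that the higher-step analogue of~\eqref{eq:bgg-formula} produces a quasinorm whose $\infty$-Laplacian cannot vanish identically. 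I regard this part of the conjecture as substantially more open-ended.
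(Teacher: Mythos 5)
The statement you are addressing is labeled a conjecture in the paper, and the paper does not prove it. What the paper does record (in Section 5, immediately after restating Conjecture \ref{conj:polarizability-conjecture-2}) is exactly the reduction in your first paragraph: if $\G$ is polarizable then $\cL_\infty N$ vanishes off the origin, so the conjectured comparison $\sup_{\ttg\in\Sigma}|N(\cL_\infty N)(\ttg)|\simeq\delta(\G)^2$ would force $\delta(\G)=0$, and Proposition \ref{prop:delta_0-characterization-of-nascent-H-type-groups} would then supply a vertical metric making $\G$ an H-type group. To that extent your opening paragraph faithfully reproduces the paper's own (conditional) observation.

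But the remainder of your proposal is a research program, not a proof, and the gaps are real. The load-bearing step, the lower bound $\delta(\G)^2\lesssim\sup_{\Sigma}|N\,\cL_\infty N|$, is precisely Conjecture \ref{conj:polarizability-conjecture-2}, which the paper verifies only for the single family $\Heis^n(\tfrac12,1,\ldots,1)$ (Theorem \ref{th:anisotropic-Heis-groups}), and it does so by direct computation with the explicit Bou Dagher--Zegarli\'nski fundamental solution, not by any perturbative expansion of the Beals--Gaveau--Greiner integral of the kind you sketch. Your proposed decomposition $N=N_v(1+\eta)$ with $\eta$ controlled by $\delta(\G)$ is an idea without an argument: nothing in the paper, or in your proposal, bounds the discrepancy between Folland's quasinorm $N=u^{1/(2-Q)}$ and Kaplan's quasinorm $N_v$ in terms of $\delta(\G)$, and Propositions \ref{prop:H-type-stability-1a} and \ref{prop:H-type-stability-2} concern $N_v$ only, never the true fundamental solution. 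Your heuristic for the exponent $2$ (``the first-order contribution vanishes since $\boldt=0$ on $\Sigma$'') is likewise unsupported; in the paper's one worked case the $1/n$ behaviour on $\Sigma$ emerges from the specific cancellation $G_2^2-2F_3=2At^2/(C^3(C+A)^2)$, which there is no reason to expect in general without proof. Finally, as you yourself note, the assertion that a polarizable group must have step two is entirely untouched by this circle of ideas. The honest status is that you have restated the paper's conditional reduction of the step two case, and the conjecture remains open.
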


In this section we discuss a possible approach to Conjecture \ref{conj:full-polarizability-conjecture} for step two Carnot groups via the H-type deviation. We restate Conjecture \ref{conj:polarizability-conjecture} from the introduction.

\begin{conjecture}\label{conj:polarizability-conjecture-2}
Let $\G$ be a step two Carnot group with metric $g_h$ defined in the first layer of the Lie algebra. Denote by $\Sigma = \{\ttg = \exp(\boldx + \boldt) \in \G : \boldt = 0 \}$. Let $u$ be the fundamental solution for the sub-Laplacian $\cL$ and let $N = u^{1/(2-Q)}$. Then
\begin{equation}\label{eq:polarizability-conjecture-2}
\sup_{\ttg \in \Sigma} |N(\ttg)(\cL_\infty N)(\ttg)| \simeq \delta(\G)^2 \, .
\end{equation}
\end{conjecture}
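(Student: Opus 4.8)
\emph{Reduction and the shape of $\cL_\infty N$ on $\Sigma$.}
Since $\ttg\mapsto N(\ttg)(\cL_\infty N)(\ttg)$ is $0$-homogeneous it suffices to control it on $\{\ttg=\exp(\boldx):\|\boldx\|_h=1\}\subset\Sigma$, a set on which $N$ is bounded above and below. I would begin with two elementary facts about the restriction of $N=u^{1/(2-Q)}$ to $\Sigma$. First, Folland's fundamental solution is even in $\boldt$: $\cL$ is formally self-adjoint on the unimodular group $\G$, so $u(\ttg)=u(\ttg^{-1})=u(-\boldx,-\boldt)$, while $(\boldx,\boldt)\mapsto(-\boldx,\boldt)$ is a stratified automorphism, so $u(-\boldx,-\boldt)=u(\boldx,-\boldt)$; hence $\partial_{t_q}u\equiv 0$ and $\partial_{t_q}N\equiv 0$ on $\Sigma$. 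Second, for every $s$-homogeneous $\phi$ one has on $\Sigma$, with $\phi_{,ij}=\tfrac12(X_iX_j\phi+X_jX_i\phi)$,
\[
\sum_{i=1}^m x_i\,X_i\phi=s\,\phi,\qquad \sum_{i,j=1}^m \phi_{,ij}\,x_ix_j=s(s-1)\,\phi,\qquad \sum_{i=1}^m x_i\,\phi_{,ij}=(s-1)\,X_j\phi-\tfrac12\sum_q\beta_j^q\,\partial_{t_q}\phi,
\]
which follow by differentiating the Euler relation $\sum_i x_iX_i\phi+2\sum_q t_q\partial_{t_q}\phi=s\phi$ and contracting with $\boldx$, using $\sum_i x_i\beta_i^q=\langle\boldJ_{T_q}\boldx,\boldx\rangle_h\equiv0$ and $\sum_i x_ib_{ij}^q=\beta_j^q$. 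Applying the last two identities to the $1$-homogeneous $N$ and using $\partial_{t_q}N|_\Sigma=0$ gives $\sum_{ij}N_{,ij}x_ix_j=0$ and $\sum_i x_iN_{,ij}=0$ on $\Sigma$. Writing $X_iN=X_iN_v+X_i\phi$ with $\phi:=N-N_v$ for any Kaplan quasinorm $N_v$ (so $X_iN_v=x_i/\|\boldx\|_h$ and $\sum_{ij}(N_v)_{,ij}x_ix_j=0$ on $\Sigma$ by Lemma \ref{lem:4} and the Euler--Hessian identity), the cross and diagonal terms in $\cL_\infty N=\sum_{ij}N_{,ij}(X_iN)(X_jN)$ collapse and
\[
(\cL_\infty N)(\ttg)=\sum_{i,j=1}^m N_{,ij}(\ttg)\,(X_i\phi)(\ttg)\,(X_j\phi)(\ttg)\qquad(\ttg\in\Sigma).
\]
In particular $\cL_\infty N_v\equiv0$ on $\Sigma$ for every vertical metric, so the conjectured quantity measures precisely the failure of Folland's quasinorm to be a Kaplan quasinorm along $\Sigma$, weighted by the horizontal Hessian of $N$; this also explains the \emph{quadratic} dependence on $\delta(\G)$.

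\emph{The upper bound.}
Fix the optimal vertical metric $g_v^{*}$, so that $\delta(\G,g_v^{*})=\delta(\G)$, and set $u_{v^{*}}=N_{v^{*}}^{2-Q}$, $\psi:=u-u_{v^{*}}$, $\phi:=N-N_{v^{*}}$. By Theorem \ref{th:kaplan-theorem}, $\psi\equiv0$ whenever $\delta(\G)=0$; in general $\cL\psi=-\cL u_{v^{*}}$ on $\G\setminus\{0\}$, and Corollary \ref{cor:H-type-stability-1b} gives $\sup_{\ttg\ne0}N_{v^{*}}(\ttg)^{Q}|(\cL u_{v^{*}})(\ttg)|\le C\,\delta(\G)$. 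Since $\psi$ is $(2-Q)$-homogeneous, solving this equation on the compact set $\{N_{v^{*}}=1\}$ and invoking the hypoelliptic interior estimates for $\cL$ --- equivalently, the real-analytic dependence of the Beals--Gaveau--Greiner representation \eqref{eq:bgg-formula} on the structure constants together with Theorem \ref{th:kaplan-theorem} --- yields $\|\phi\|_{C^1(\{\|\boldx\|_h=1\})}\le C\,\delta(\G)$ with $C=C(m,m_2)$, while $\|(N_{,ij})\|_{op}$ stays bounded there. Feeding this into the identity above,
\[
\sup_{\ttg\in\Sigma}|N(\ttg)(\cL_\infty N)(\ttg)|\le\sup_{\ttg\in\Sigma}N(\ttg)\,\|(N_{,ij})(\ttg)\|_{op}\,\|(\nabla_0\phi)(\ttg)\|_h^{2}\le C\,\delta(\G)^{2}.
\]

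\emph{The lower bound and the main obstacle.}
It remains to find $c>0$ with $\sup_{\ttg\in\Sigma}|N\,\cL_\infty N|\ge c\,\delta(\G)^{2}$. By the identity of the first paragraph this needs, at some point of $\Sigma$: (i) a genuine lower bound $\|\nabla_0\phi\|_h\gtrsim\delta(\G)$; and (ii) non-degeneracy of the quadratic form $\xi\mapsto\sum_{ij}N_{,ij}\xi_i\xi_j$ in the relevant direction. For (i): if $\phi\equiv0$ then $\cL(N_{v^{*}}^{2-Q})\equiv0$ on $\G\setminus\{0\}$, so $\G$ is H-type by Theorem \ref{thm:main-theorem-2} and $\delta(\G)=0$; the quantitative strengthening is a coercivity (quantitative-uniqueness) estimate for $\cL$ relating $\|\psi\|$ to $\|\cL u_{v^{*}}\|\simeq\delta(\G)$, which is delicate because $\psi$ shares the homogeneity, hence the order of singularity at $0$, of $u_{v^{*}}$. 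For (ii): one has $N_{,ij}=(N_{v^{*}})_{,ij}+O(\delta(\G))$ with $(N_{v^{*}})_{,ij}|_\Sigma=\|\boldx\|_h^{-3}\bigl(\|\boldx\|_h^{2}\delta_{ij}-x_ix_j+2\sum_q(\boldJ_{T_q}\boldx)_i(\boldJ_{T_q}\boldx)_j\bigr)$ positive semidefinite with kernel containing $\boldx$, so one must locate $\ttg\in\Sigma$ at which $\nabla_0\phi$ has a component of size $\simeq\delta(\G)$ transverse to this kernel. Both points I would attack through \eqref{eq:bgg-formula}: the phase has the form $f(\boldx,\boldt,\tau)=g(\boldx,\tau)+\sqrt{-1}\,\langle\tau,\boldt\rangle$ with $g$ built from $\boldOmega(\tau)\coth\boldOmega(\tau)$, so expanding in $\boldt$ about $\boldt=0$ produces only even powers, the $\boldt\otimes\boldt$--coefficient of $u$ being a multiple of $\int_{\fv_2}V(\tau)\,g(\boldx,\tau)^{-Q/2-1}\,\tau\otimes\tau\,d\tau$; substituting $\boldOmega(\tau)^{2}=-\tfrac14(\boldJ_{\tau}^{[g_v^{*}]})^{2}=\tfrac14\|\tau\|_{v^{*}}^{2}\Id-\tfrac14\bigl((\boldJ_{\tau}^{[g_v^{*}]})^{2}+\|\tau\|_{v^{*}}^{2}\Id\bigr)$ isolates the deviation tensors $(\boldJ_T^{[g_v^{*}]})^{2}+\|T\|_{v^{*}}^{2}\,\Id$, and one then tracks how these enter $\phi$ and $\nabla_0\phi$ along $\Sigma$. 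The quantity to extract is that this dependence is, to leading order, a non-degenerate quadratic form in those tensors, so that the supremum over $\Sigma$ recovers $\sup_{\|T\|_{v^{*}}=1}\|(\boldJ_T^{[g_v^{*}]})^{2}+\Id\|_{HS}^{2}\simeq\dim\fv_1\cdot\delta(\G)^{2}$. I expect the coercivity estimate for $\cL$, the uniform control of the remainder of the $\boldt$-expansion, and the transversality in (ii) to be the main obstacle --- precisely the step carried out by direct computation for $\Heis^n(\tfrac12,1,\ldots,1)$ in Theorem \ref{th:anisotropic-Heis-groups}. Once the two-sided bound is established, the hypothesis $\sup_{\ttg\in\Sigma}|N\,\cL_\infty N|=0$ --- which holds in particular when $\G$ is polarizable, since then $\cL_\infty N\equiv0$ on all of $\G\setminus\{0\}$ --- forces $\delta(\G)=0$, hence $\G$ is a nascent H-type group by Proposition \ref{prop:delta_0-characterization-of-nascent-H-type-groups}, which is Conjecture \ref{conj:full-polarizability-conjecture} in step two.
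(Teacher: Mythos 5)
The statement you are proving is Conjecture \ref{conj:polarizability-conjecture-2}, which the paper does \emph{not} prove: it is left open, and the only evidence supplied is Theorem \ref{th:anisotropic-Heis-groups}, where \eqref{eq:polarizability-conjecture-2} is verified for $\Heis^n(\tfrac12,1,\ldots,1)$ by brute-force computation with the explicit Bou Dagher--Zegarli\'nski fundamental solution \eqref{eq:anisotropic-u} (expanding $N\cL_\infty N$ as a Laurent series in $n$ and checking that the leading term is $\simeq 1/n \simeq \delta(\G)^2$). So there is no proof in the paper to compare against, and your submission is correctly framed as a strategy rather than a proof. Within that framing, your first paragraph contains genuine content that the paper does not make explicit: the identities $\sum_i x_i N_{,ij}=0$ and $\sum_{ij}N_{,ij}x_ix_j=0$ on $\Sigma$ (which do follow from $1$-homogeneity of $N$ together with the evenness of $u$ in $\boldt$, the latter correctly deduced from $u(\ttg)=u(\ttg^{-1})$ and the automorphism $\boldx\mapsto-\boldx$), and the resulting collapse $\cL_\infty N|_\Sigma=\sum_{ij}N_{,ij}(X_i\phi)(X_j\phi)$ with $\phi=N-N_v$. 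This cleanly explains the quadratic power of $\delta(\G)$ in \eqref{eq:polarizability-conjecture-2} and is a more structural route than the paper's direct computation.

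However, neither half of the two-sided bound is actually established, and the upper bound is not as routine as your write-up suggests. From Corollary \ref{cor:H-type-stability-1b} you get $\sup N_{v^*}^Q|\cL u_{v^*}|\le C\delta(\G)$, but passing from this to $\|N-N_{v^*}\|_{C^1}\le C\delta(\G)$ on $\{\|\boldx\|_h=1\}$ via ``hypoelliptic interior estimates'' is a gap: subelliptic Schauder estimates control $\|\psi\|_{C^{1,\alpha}}$ only in terms of $\|\cL\psi\|_\infty+\|\psi\|_{L^\infty}$ on a larger set, and the needed global bound $\|\psi\|_{L^\infty}\lesssim\delta(\G)$ on the unit sphere for $\psi=u-u_{v^*}$ is itself a quantitative-closeness statement about Folland's fundamental solution that nothing in the paper provides. (Writing $\psi$ as a convolution of the fundamental solution against the $(-Q)$-homogeneous error $\cL u_{v^*}$ is borderline divergent at both $0$ and $\infty$, and one must also match the normalizing constants of $u$ and $u_{v^*}$, which themselves depend on $g_{v^*}$.) Moreover $\delta(\G)$ need not be small --- for free groups it tends to $1$ --- so a purely perturbative scheme cannot give the upper bound uniformly over the class of step two groups without a separate non-perturbative argument. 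The lower bound, which you candidly flag as the main obstacle, requires exactly the coercivity and transversality statements you name; the paper circumvents both only by having a closed-form $u$ in one example. In short: the reduction in your first paragraph is correct and worth keeping, but the remainder is a program, with the decisive analytic steps (quantitative $L^\infty$ control of $u-u_{v^*}$, and the non-degeneracy needed for the lower bound) still missing --- consistent with the fact that the statement remains a conjecture in the paper.
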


If Conjecture \ref{conj:polarizability-conjecture-2} holds true, then Conjecture \ref{conj:full-polarizability-conjecture} holds for step two groups. Indeed, if $\G$ is polarizable then $\cL_\infty N$ vanishes on all of $\G$; \eqref{eq:polarizability-conjecture-2} then implies that $\delta(\G) = 0$, after which Proposition \ref{prop:delta_0-characterization-of-nascent-H-type-groups} implies that $\G$ is a nascent H-type group.

The main result of this section is the following theorem.

\begin{theorem}\label{th:anisotropic-Heis-groups}
Conjecture \ref{conj:polarizability-conjecture-2} holds true for the anisotropic Heisenberg groups $\Heis^n(\tfrac12,1,\ldots,1)$.
\end{theorem}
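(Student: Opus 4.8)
The plan is to verify both directions of the comparison \eqref{eq:polarizability-conjecture-2} for $\G = \Heis^n(\tfrac12,1,\ldots,1)$ by combining the explicit Bou Dagher--Zegarli\'nski formula for Folland's fundamental solution $u$ on these groups with the value of $\delta(\G)$ obtained from Example \ref{ex:anisotropic-Heisenberg}. First I would compute $\delta(\G)^2$ directly: with $\bb = (\tfrac12,1,\ldots,1)$, formula \eqref{eq:delta0anisotropic-Heisenberg} gives $\delta(\Heis^n(\bb))^2 = 1 - (\|\bb\|_2/\|\bb\|_4)^4$, and since $\|\bb\|_2^2 = \tfrac1n(n - \tfrac34)$ and $\|\bb\|_4^4 = \tfrac1n(n - \tfrac{15}{16})$, this is an explicit rational function of $n$, of size $\Theta(1/n^2)$ as $n \to \infty$. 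The target is therefore to show that $\sup_{\ttg \in \Sigma}|N(\ttg)(\cL_\infty N)(\ttg)|$ is comparable, with constants depending only on $n$ (or, better, absolute constants), to this quantity.

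Next I would restrict attention to the slice $\Sigma = \{\boldt = 0\}$, as the statement allows. On $\Sigma$ the fundamental solution and hence $N = u^{1/(2-Q)}$ simplify considerably: by homogeneity and the rotational symmetry in the $(x_j,y_j)$ planes for $j \ge 2$, the restriction $N|_\Sigma$ depends only on $\rho_1^2 = x_1^2 + y_1^2$ and $\rho^2 = \sum_{j\ge 2}(x_j^2+y_j^2)$, and is $1$-homogeneous jointly. The key computation is then to expand $\cL_\infty N = \|\nabla_0 N\|_h^{-2}\sum_{i,j} N_{,ij}(X_iN)(X_jN)$ — equivalently $\tfrac12\langle \nabla_0 N, \nabla_0(\|\nabla_0 N\|_h^2)\rangle$ — at points of $\Sigma$, using Lemmas \ref{lem:0}--\ref{lem:4} together with the structure constants $b_{ij}^q$ of $\Heis^n(\bb)$ (here $m_2 = 1$, $\boldB = \mathrm{diag}$ of $2\times 2$ skew blocks $b_j\begin{pmatrix}0&1\\-1&0\end{pmatrix}$). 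On $\Sigma$ one has $\boldt = 0$, so many vertical contributions drop out, and the quantity $N(\cL_\infty N)$ becomes an explicit algebraic expression in $\rho_1, \rho$ and the anisotropy parameters. I expect the computation to produce, after taking the supremum over the $1$-homogeneous slice (equivalently, over the unit circle $\rho_1^2 + \rho^2 = 1$ in appropriate coordinates), a closed-form expression whose leading behavior matches $1 - (\|\bb\|_2/\|\bb\|_4)^4$ up to universal multiplicative constants.

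The main obstacle will be obtaining sufficiently explicit control of the Bou Dagher--Zegarli\'nski fundamental solution and its first two horizontal derivatives on $\Sigma$ to extract sharp two-sided bounds rather than merely an order-of-magnitude estimate; the integral representation \eqref{eq:bgg-formula} with $\boldOmega(\tau)$ built from the diagonal blocks must be evaluated (or at least its relevant derivatives estimated) along $\boldt = 0$. A secondary technical point is the uniformity of the comparison constants in $n$: since $\delta(\G)^2 = \Theta(1/n^2)$ while the slice $\Sigma$ has dimension growing with $n$, I would need to check that the supremum defining the left side of \eqref{eq:polarizability-conjecture-2} is attained (or nearly attained) at configurations concentrated in the first $(x_1,y_1)$-plane — which is natural since that is the only plane where the anisotropy is felt — and that the contribution of the isotropic directions $j \ge 2$ is genuinely lower order. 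Once the sharp asymptotics of $N(\cL_\infty N)$ on $\Sigma$ are in hand, matching them against the explicit formula for $\delta(\G)^2$ completes the proof; both quantities vanish precisely when $n$ would force $\bb$ constant, which never happens here, consistent with $\Heis^n(\tfrac12,1,\ldots,1)$ being non-H-type.
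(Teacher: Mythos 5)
Your overall strategy is the paper's: use the explicit Bou Dagher--Zegarli\'nski formula for $u$, restrict to $\Sigma$, compute $N\,\cL_\infty N$ there, and match its supremum against $\delta(\G)^2$. However, there is a concrete error at the very first step that would derail the verification. With $\bb=(\tfrac12,1,\ldots,1)$ one has $||\bb||_2^2 = (n-\tfrac34)/n$ and $||\bb||_4^4=(n-\tfrac{15}{16})/n$, so
$$
\delta(\G)^2 = 1-\frac{(n-\tfrac34)^2}{n\,(n-\tfrac{15}{16})}=\frac{9(n-1)}{n(16n-15)} \simeq \frac1n,
$$
not $\Theta(1/n^2)$ as you assert. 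Since the entire proof consists of showing that $\sup_{\ttg\in\Sigma}|N(\cL_\infty N)|$ has the same order as this quantity, aiming at $1/n^2$ means you would be trying to prove something false: the supremum on $\Sigma\cap S_N$ is genuinely comparable to $1/n$ (the paper exhibits points where $\cL_\infty N/N^3 = \tfrac1{2n}-\tfrac1{4n^2}\ge \tfrac1{4n}$). The matching step at the end of your plan cannot close with the wrong target.

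A second, related gap is your heuristic that the supremum should be (nearly) attained at configurations concentrated in the $(x_1,y_1)$-plane ``since that is the only plane where the anisotropy is felt.'' The lower bound in the paper is obtained at the opposite extreme, at points $(0,z',0)$ with $||z'||=1$, i.e., entirely in the isotropic directions, where $A=0$ and $B=C=\tfrac12$ and the formulas collapse to clean polynomials in $\bar n = n-\tfrac12$. The lower bound is the delicate half of the argument, and your plan sends you to the wrong part of the unit sphere without any computation to back up the guess. Two smaller points: the two expressions you offer for $\cL_\infty N$ are not equivalent (one carries the normalizing factor $||\nabla_0 N||_h^{-2}$, the other is the unnormalized operator $\tfrac12\langle\nabla_0(||\nabla_0 N||_h^2),\nabla_0 N\rangle_h$ used in the paper and in the conjecture), and you do not need to estimate the Beals--Gaveau--Greiner integral along $\Sigma$, since the closed form \eqref{eq:anisotropic-u} is already available; the paper's actual mechanism for the upper bound is to expand $\cL_\infty N/N^3$ as a Laurent series in $n$ and to show that the coefficient of $n^0$ is a multiple of $At^2/(C^3(C+A)^2)$, which vanishes identically on $\Sigma$ --- that cancellation is the key structural fact your outline does not anticipate.
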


The group $\Heis^n(\tfrac12,1,\ldots,1)$ is modeled by the underlying space $\R^{2n+1}$ with horizontal distribution defined at each point $\ttg = (x,y,t)$ as the span of the following vector fields:
$$
X_1 = \deriv{x_1} - \frac12 y_1 \deriv{t}, \qquad Y_1 = \deriv{y_1} + \frac12 x_1 \deriv{t},
$$
$$
X_j = \deriv{x_j} - y_j \deriv{t}, \quad Y_j = \deriv{y_j} + \frac12 x_j \deriv{t}, \qquad \quad 2 \le j \le n.
$$
The vertical direction is spanned at each point by $T = \deriv{t}$. We observe the nontrivial bracket relations $[X_1,Y_1] = T$ and $[X_j,Y_j] = 2T$ for all $2 \le j \le n$. To simplify later formulas we also identify the group with $\C^n \times \R$ by writing $\ttg = (x,y,t) = (z,t)$ with $z = x + \bi y$.

Denote by $\nabla_0 u = (X_1u,Y_1u,X_2u,Y_2u,\ldots,X_nu,Y_nu)$ the horizontal gradient of a function $u:\Heis^n(\tfrac12,1,\ldots,1) \to \R$, by $\diver_0(\cX) = X_1(a_1)+Y_1(b_1)+\cdots+X_n(a_n)+Y_n(b_n)$ the horizontal divergence of a horizontal vector field $\cX = a_1 X_1 + b_1 Y_1 + \cdots + a_n X_n + b_n Y_n$, and by
$$
\cL u = \diver_0(\nabla_0 u) = \sum_{j=1}^n X_j^2 u + Y_j^2 u
$$
and
$$
\cL_\infty u = \frac12 \langle \nabla_0 (||\nabla_0 u||_h^2), \nabla_0 u \rangle_h
$$
the Laplacian and $\infty$-Laplacian, respectively. 

An explicit computation of the Beals--Gaveau--Greiner formula \eqref{eq:bgg-formula} can be carried out in this case. The result is summarized in the following theorem. See \cite{bt:polar} for the case $n=2$ and \cite{bdz:anisotropic} for the case of general $n \ge 2$. Note that we have adjusted the definitions of the intermediate variables $A$, $B$, and $C$ from those references, to aid in subsequent computations. We also remark that fundamental solutions for sub-Laplacians (and a variety of other differential operators) on anisotropic Heisenberg groups were considered by Chang and Tie in \cite{ct:anisotropic1}, \cite{ct:anisotropic2}.

\begin{theorem}
In $\Heis^n(\tfrac12,1,\ldots,1)$, with $z_j = (x_j,y_j)$, $1\le j\le n$, and $z'=(z_2,\ldots,z_n)$, introduce the following expressions:
\begin{equation}\label{eq:A}
A := \frac14 |z_1|^2,
\end{equation}
\begin{equation}\label{eq:B}
B := \frac14 |z_1|^2 + \frac12 ||z'||^2,
\end{equation}
and
\begin{equation}\label{eq:C}
C := \sqrt{B^2 + t^2}.
\end{equation}
Then the fundamental solution for the sub-Laplacian $\cL$ is given by a constant multiple of
\begin{equation}\label{eq:anisotropic-u}
u(z,t) = \frac{(B+C)^{1/2}}{C (A+C)^{n-1/2}} \, .
\end{equation}
\end{theorem}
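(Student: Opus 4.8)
The plan is to verify directly that the displayed $u$ in \eqref{eq:anisotropic-u} is, up to a constant, a fundamental solution for $\cL$, by checking three standard ingredients: homogeneity of the correct degree, $\cL$-harmonicity off the origin, and that the resulting distributional Laplacian is a nonzero multiple of the Dirac mass at $0$. First I would record the dilation weights of the building blocks $A$, $B$, $C$ from \eqref{eq:A}--\eqref{eq:C}. Under $\delta_\lambda$ one has $z \mapsto \lambda z$ and $t \mapsto \lambda^2 t$, so each of $A$, $B$ is $2$-homogeneous, and hence $C = \sqrt{B^2+t^2}$ is $2$-homogeneous as well. Counting degrees in \eqref{eq:anisotropic-u} gives that $u$ is homogeneous of degree $1 - 2 - (2n-1) = -2n = 2-Q$, matching the homogeneity of Folland's fundamental solution. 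I would also note that $C = 0$ only at the origin, so $u$ is smooth and strictly positive on $\G \setminus \{0\}$ (for instance $u = |t|^{-n}$ on the $t$-axis); being $(2-Q)$-homogeneous with an isolated singularity, $u$ lies in $L^1_{loc}(\G)$.

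The computational core is to show $\cL u = 0$ on $\G \setminus \{0\}$. Writing $u = F(A,B,C)$ with $F(A,B,C) = (B+C)^{1/2} C^{-1} (A+C)^{-(n-1/2)}$, I would apply the sum-of-squares chain rule $\cL u = \sum_a F_{\phi_a} \cL \phi_a + \sum_{a,b} F_{\phi_a \phi_b} \langle \nabla_0 \phi_a, \nabla_0 \phi_b \rangle_h$ with $(\phi_1,\phi_2,\phi_3) = (A,B,C)$, having first precomputed the first- and second-order data. A short calculation with the given vector fields yields the clean identities $\cL A = 1$, $\cL B = 2n-1$, $||\nabla_0 A||_h^2 = A$, $||\nabla_0 B||_h^2 = 2B-A$, and $\langle \nabla_0 A, \nabla_0 B \rangle_h = A$, together with $\cL t = 0$ and $||\nabla_0 t||_h^2 = 2B-A$. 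The decisive simplification is the pointwise identity $(Bx_j - ty_j)^2 + (By_j + tx_j)^2 = (B^2+t^2)(x_j^2+y_j^2) = C^2 (x_j^2+y_j^2)$, which collapses the $C$-gradient data to $||\nabla_0 C||_h^2 = 2B-A$, $\langle \nabla_0 A, \nabla_0 C \rangle_h = AB/C$, and $\langle \nabla_0 B, \nabla_0 C \rangle_h = B(2B-A)/C$; applying $\cL$ to $C^2 = B^2 + t^2$ then gives $\cL C = (B(2n+1) - A)/C$. The anisotropic coefficients $\tfrac14$ and $\tfrac12$ in $A$ and $B$ are exactly what force these quantities to be constant or to close up rationally in $A,B,C$.

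With this data, substituting into the chain rule reduces $\cL u / u$ to a rational expression in $A,B,C$ alone, and the claim $\cL u = 0$ becomes a polynomial identity once the denominators $C$, $A+C$, $B+C$ are cleared. Verifying this cancellation is the main obstacle: since $A,B,C$ range freely over $0 \le A \le B \le C$, any identity valid there holds identically, but the algebra is lengthy. I would organize it by grouping terms according to their denominators, so that the $(A+C)^{-2}$ contributions combine through the numerator factor $A + (2B-A) = 2B$ and the relation $1 + A/C = (A+C)/C$, and likewise the $(B+C)^{-2}$ contributions combine through $1 + B/C = (B+C)/C$; I expect these to telescope to zero. This is the same cancellation that underlies the Beals--Gaveau--Greiner evaluation of \eqref{eq:bgg-formula} carried out in \cite{bt:polar} and \cite{bdz:anisotropic}, and one could alternatively derive \eqref{eq:anisotropic-u} by evaluating that integral directly (here $\dim \fv_2 = 1$, so it is one-dimensional and amenable to residues).

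Finally, to upgrade $\cL$-harmonicity off the origin to the fundamental-solution property, I would argue as follows. Since $u \in L^1_{loc}$ and $\cL u = 0$ on $\G \setminus \{0\}$, the distribution $\cL u$ is supported at $\{0\}$; because $\cL$ lowers homogeneous degree by $2$ and $u$ is $(2-Q)$-homogeneous, $\cL u$ is homogeneous of degree $-Q$. Among distributions supported at the origin, the Dirac mass $\delta_0$ is the unique one (up to scalar) of homogeneous degree $-Q$, every higher-order derivative of $\delta_0$ having strictly smaller degree under the anisotropic dilations; hence $\cL u = c\, \delta_0$ for some constant $c$. It remains to rule out $c = 0$: were $c = 0$, then $\cL u = 0$ on all of $\G$, and hypoellipticity of $\cL$ would force $u \in C^\infty(\G)$, contradicting the blow-up of $u$ at the origin. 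Therefore $c \ne 0$ and $u/c$ is the fundamental solution, which proves the theorem.
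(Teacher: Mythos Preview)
Your proposal is correct and follows essentially the same route as the paper's own treatment. The paper states this theorem by citing \cite{bt:polar} and \cite{bdz:anisotropic}, and then (a few paragraphs later) takes a ``brief detour'' to verify $\cL u = 0$ directly; that verification uses exactly the gradient identities you record --- $||\vec{u}||^2 = \langle \vec{u},\vec{v}\rangle = A$, $||\vec{v}||^2 = ||\vec{w}||^2 = 2B-A$, $\langle \vec{u},\vec{w}\rangle = \langle \vec{v},\vec{w}\rangle = 0$ --- and then carries out the same algebraic cancellation. The only organizational difference is that the paper works with $\log u$ and packages the coefficients of $\vec{u},\vec{v},\vec{w}$ into auxiliary functions $P,Q,R$ before taking the divergence, whereas you apply the second-order chain rule to $F(A,B,C)$ directly; these are the same computation in different bookkeeping. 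Your distributional endgame (homogeneity $-Q$, support at $0$, hence $c\,\delta_0$; $c\ne 0$ by hypoellipticity) is a clean addition that the paper itself does not spell out, instead deferring the fundamental-solution property to the cited references, which derive it by evaluating the Beals--Gaveau--Greiner integral \eqref{eq:bgg-formula}.
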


The corresponding homogeneous norm is
\begin{equation}\label{eq:anisotropic-N}
N(z,t) = 2^{1/2 + 1/4n} \frac{C^{1/2n}(A+C)^{1/2-1/4n}}{(B+C)^{1/{4n}}}.
\end{equation}

\begin{remark}
The coefficient $2^{1/2+1/(4n)}$ is included in this expression is to simplify the description of the intersection of the unit $N$-sphere $S_N$ with the horizontal plane $\Sigma = \{ t = 0 \}$. Indeed, note that if $(z,t) \in \Sigma$, then $B=C$ and
$$
N(z,0) = \sqrt{2} B^{\tfrac1{4n}} (A+B)^{\tfrac12-\tfrac1{4n}} \, .
$$
Hence $(z,0) \in S_N \cap \Sigma$ if and only if
$$
4^n B (A+B)^{2n-1} = 1,
$$
that is,
$$
(\frac12 |z_1|^2 + ||z'||^2)(|z_1|^2 + ||z'||^2)^{2n-1} = 1.
$$
In particular, a point $(0,z',0)$ lies in $S_N \cap \Sigma$ if and only if $||z'|| = 1$.
\end{remark}

In order to prove Theorem \ref{th:anisotropic-Heis-groups} we need to know the value of the H-type deviation. This was computed in Example \ref{ex:anisotropic-Heisenberg}. With $b_1 = \tfrac12$ and $b_2=\cdots=b_n=1$ we obtain
$$
\delta(\Heis^n(\tfrac12,1,\ldots,1)) = \sqrt{1-\frac{(n^{-1}(n-1+\tfrac14))^2}{n^{-1}(n-1+\tfrac1{16})}} = \sqrt{\frac{9(n-1)}{n(16n-15)}} \simeq \frac1{\sqrt{n}},
$$
where the notation $a(n) \simeq b(n)$ means that $0<\liminf a(n)/b(n) \le \limsup a(n)/b(n) < \infty$. It thus suffices for us to prove that
$$
\sup_{\ttg \in \Sigma} |N(\ttg)(\cL_\infty N)(\ttg)|  \simeq \frac1n.
$$
Noting as before that $N \, \cL_\infty N$ is $0$-homogeneous, we conclude that it is equivalent for us to prove that
\begin{equation}\label{eq:NLN-asymptotics}
\sup_{\substack{\ttg \in \Sigma \\ N(\ttg) = 1}} |N(\ttg)(\cL_\infty N)(\ttg)|  \simeq \frac1n.
\end{equation}
We emphasize here that the implicit comparison constants in \eqref{eq:NLN-asymptotics} do {\it not} depend on the dimension $n$ associated to the anisotropic Heisenberg group.

\smallskip

\noindent It will be more convenient to calculate using the logarithms of $N$ and $u$. To this end, we use the following proposition relating the $\infty$-Laplacians of a positive function $v$ and of its logarithm.

\begin{proposition}\label{prop:log}
Let $v$ be a positive function defined on a Carnot group $\G$. Then
$$
\nabla_0(\log v) = \frac{\nabla_0 v}{v},
$$
$$
||\nabla_0(\log v)||_h^2 = \frac{||\nabla_0 v||_h^2}{v^2},
$$
\begin{equation}\label{eq:Laplacian-log-v}
\cL(\log v) = \frac{\cL v}{v} - \frac{||\nabla_0 v||_h^2}{v^2},
\end{equation}
and
\begin{equation}\label{eq:infinity-Laplacian-log-v}
\cL_\infty(\log v) = \frac{\cL_\infty v}{v^3} - \frac{||\nabla_0 v||_h^4}{v^4}.
\end{equation}
\end{proposition}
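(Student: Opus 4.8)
The plan is to prove the four displayed identities in sequence, each one deduced from its predecessors together with the elementary fact that each horizontal vector field $X_i$ is a first-order differential operator on $\G$, hence obeys the Leibniz product rule and the chain rule. The first identity is just the chain rule applied to each component: $X_i(\log v) = v^{-1}X_iv$ for $i=1,\dots,m$, which is precisely $\nabla_0(\log v)=\nabla_0 v/v$. Taking the squared $g_h$-norm of this vector-valued identity immediately gives the second, $||\nabla_0(\log v)||_h^2 = ||\nabla_0 v||_h^2/v^2$.

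For \eqref{eq:Laplacian-log-v} I would apply $X_i$ a second time to $X_i(\log v)=v^{-1}X_iv$, using the quotient rule, to obtain $X_i^2(\log v) = v^{-1}X_i^2v - v^{-2}(X_iv)^2$, and then sum over $i$; since $\cL=\sum_i X_i^2$ and $||\nabla_0 v||_h^2 = \sum_i (X_iv)^2$ this yields $\cL(\log v) = \cL v/v - ||\nabla_0 v||_h^2/v^2$.

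The identity \eqref{eq:infinity-Laplacian-log-v} is the one requiring a short computation, and the only point demanding care is that $\cL_\infty$ is a nonlinear second-order operator, so one must work from its definition $\cL_\infty u = \tfrac12\langle \nabla_0(||\nabla_0 u||_h^2),\nabla_0 u\rangle_h$ rather than attempt a naive substitution. Setting $w=\log v$ and inserting the already-proved identity $||\nabla_0 w||_h^2 = v^{-2}||\nabla_0 v||_h^2$, the Leibniz rule gives
$$\nabla_0\bigl(||\nabla_0 w||_h^2\bigr) = v^{-2}\,\nabla_0\bigl(||\nabla_0 v||_h^2\bigr) - 2v^{-3}||\nabla_0 v||_h^2\,\nabla_0 v.$$
Pairing this with $\nabla_0 w = v^{-1}\nabla_0 v$, multiplying by $\tfrac12$, and using $\langle\nabla_0 v,\nabla_0 v\rangle_h = ||\nabla_0 v||_h^2$ then produces
$$\cL_\infty w = \tfrac12 v^{-3}\langle\nabla_0(||\nabla_0 v||_h^2),\nabla_0 v\rangle_h - v^{-4}||\nabla_0 v||_h^4 = \frac{\cL_\infty v}{v^3} - \frac{||\nabla_0 v||_h^4}{v^4},$$
as claimed. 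I do not anticipate any genuine obstacle here: every step uses only that the vector fields $X_i$ act as derivations, so --- as the statement already indicates --- all four identities hold verbatim in an arbitrary Carnot group.
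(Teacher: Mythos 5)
Your proof is correct, and it is exactly the elementary computation the paper has in mind (the paper omits the proof, remarking only that it is elementary). All four steps check out, including the careful use of the definition of $\cL_\infty$ in the last identity.
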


The proof is elementary and will be omitted.

In the following lemma, we record the first and second derivatives of the expressions $A$, $B$, and $C$ defined in \eqref{eq:A}, \eqref{eq:B}, and \eqref{eq:C}.

\begin{lemma}\label{lem:derivatives-ABCt}
Let $\G = \Heis^n(\tfrac12,1,\ldots,1)$. For the functions $A = \tfrac14|z_1|^2$,  $B = \tfrac14|z_1|^2 + \tfrac12||z'||^2$, $t$, and $C = \sqrt{B^2+t^2}$ on $\G$, we have
$$
\nabla_0 A = \begin{pmatrix} \tfrac12 z_1 \\ 0 \end{pmatrix} =: \vec{u},
$$
$$
\nabla_0 B = \begin{pmatrix} \tfrac12 z_1 \\ z' \end{pmatrix} =: \vec{v},
$$
$$
\nabla_0 t =  \begin{pmatrix} \tfrac12 \bi z_1 \\ \bi z' \end{pmatrix} =: \vec{w},
$$
and
$$
\nabla_0 C = \frac{B}{C} \vec{v} + \frac{t}{C} \vec{w}.
$$
\end{lemma}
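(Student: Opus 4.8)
The plan is to compute all four horizontal gradients directly from the explicit coordinate formulas for $X_1, Y_1, \dots, X_n, Y_n$, re-expressing the resulting real $2n$-tuples in the complex identification $\R^{2n} \simeq \C^n$, $(x_j,y_j) \leftrightarrow z_j = x_j + \bi y_j$, under which the $j$-th component of $\nabla_0 u = (X_1u,Y_1u,\dots,X_nu,Y_nu)$ is the complex number $X_j u + \bi Y_j u$. Two structural observations make the computation routine. First, $A$ and $B$ are quadratic polynomials in the horizontal coordinates alone and do not involve $t$, so when any $X_i$ or $Y_i$ is applied to them the $\partial/\partial t$ terms drop out and one is left with an ordinary Euclidean partial derivative. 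Second, $t$ is annihilated by every $\partial/\partial x_i$ and $\partial/\partial y_i$, so $X_i t$ and $Y_i t$ are simply the $\partial/\partial t$-coefficients of the vector fields.

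Carrying this out: for $A = \tfrac14(x_1^2+y_1^2)$ one gets $X_1 A = \tfrac12 x_1$, $Y_1 A = \tfrac12 y_1$, and $X_j A = Y_j A = 0$ for $j\ge 2$, whence $\nabla_0 A = (\tfrac12 z_1, 0,\dots,0) = \vec u$; for $B = \tfrac14(x_1^2+y_1^2) + \tfrac12\sum_{j\ge 2}(x_j^2+y_j^2)$ the same computation gives the additional terms $X_j B = x_j$, $Y_j B = y_j$ for $j\ge 2$, so $\nabla_0 B = (\tfrac12 z_1, z') = \vec v$; and for $t$ the $\partial/\partial t$-coefficients assemble into the position vector rotated by a quarter turn, i.e.\ multiplied by $\bi$, giving $\nabla_0 t = (\tfrac12 \bi z_1, \bi z') = \vec w$. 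Here one must track the anisotropic factor $\tfrac12$ attached to the first coordinate pair, which is exactly what produces the $\tfrac12 z_1$ and $\tfrac12 \bi z_1$ entries.

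Finally, $\nabla_0 C$ follows from the chain rule applied to $C = (B^2 + t^2)^{1/2}$: for any horizontal vector field $Z$, differentiating $C^2 = B^2 + t^2$ gives $Z C = (B\,ZB + t\,Zt)/C$, hence $\nabla_0 C = \tfrac{B}{C}\nabla_0 B + \tfrac{t}{C}\nabla_0 t = \tfrac{B}{C}\vec v + \tfrac{t}{C}\vec w$. There is no substantive obstacle here — the lemma is a bookkeeping exercise — and the only point demanding care is consistent handling of the anisotropy and of the real-to-complex dictionary ("rotation by $90^\circ$" $=$ "multiplication by $\bi$"). Should second-order derivatives of $A,B,t,C$ be needed later, they can be obtained by differentiating once more and invoking the formulas of Lemmas \ref{lem:0}--\ref{lem:4} for $X_iX_j$ acting on the coordinate functions.
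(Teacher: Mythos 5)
Your computation is correct and is exactly the routine coordinate calculation the paper intends (it states the lemma without proof as a record of derivatives); the chain-rule step for $C$ and the complex identification $X_ju+\bi Y_ju$ both match the paper's conventions. The only caveat is that your formula $\nabla_0 t=(\tfrac12\bi z_1,\bi z')$ presupposes $Y_j=\partial/\partial y_j+x_j\,\partial/\partial t$ for $j\ge 2$ (which is what the asserted brackets $[X_j,Y_j]=2T$ and the lemma itself require), whereas the paper's displayed $Y_j$ carries a spurious factor $\tfrac12$ — a typo in the paper, not a gap in your argument.
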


We also note that
\begin{equation}\label{eq:uvw-1}
||\vec{v}||^2 = ||\vec{w}||^2 = 2B-A,
\end{equation}
\begin{equation}\label{eq:uvw-2}
||\vec{u}||^2 = \langle \vec{u},\vec{v} \rangle = A,
\end{equation}
and
\begin{equation}\label{eq:uvw-3}
\langle \vec{v},\vec{w} \rangle = \langle \vec{u},\vec{w} \rangle = 0.
\end{equation}
The inner product in \eqref{eq:uvw-2} and \eqref{eq:uvw-3} between elements $\vec{a}=(a_1,\ldots,a_n)^T$ and $\vec{b}=(b_1,\ldots,b_n)^T$ in $\C^n$ is $\langle \vec{a},\vec{b} \rangle = \Real(\sum_{j=1}^n a_j\overline{b_j})$, and the norm in \eqref{eq:uvw-1} and \eqref{eq:uvw-2} is $||\vec{a}||^2 = \langle \vec{a},\vec{a} \rangle$.

Finally, we introduce the notation
$$
\bar{n} := n - \tfrac12
$$
to simplify upcoming formulas.

\begin{proposition}\label{prop:grad-log-u}
For the function $u$ in \eqref{eq:anisotropic-u}, we have
\begin{equation}\label{eq:grad-log-u-1}
\frac{\nabla_0 u}{u} = (P - \bar{n} \frac{B}{C} R) \vec{v} - (Q + \bar{n} \frac{t}{C} R) \vec{w} - \bar{n} R \vec{u},
\end{equation}
where
\begin{equation}\label{eq:PQR}
P = \frac{C-2B}{2C^2}, \qquad Q = \frac{(C+2B)t}{2C^2(C+B)}, \qquad \mbox{and} \qquad R = \frac1{C+A}.
\end{equation}
Furthermore,
\begin{equation}\label{eq:grad-log-u-2}
\frac{||\nabla_0 u||_h^2}{u^2} = \frac{2B-A}{2C(C+B)} + \bar{n} \frac{2(AB-AC+BC)}{C^2(C+A)} + \bar{n}^2\frac{2B}{C(C+A)} \,.
\end{equation}
\end{proposition}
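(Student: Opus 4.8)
The plan is to compute with $\log u$ rather than $u$ itself, exploiting Proposition \ref{prop:log}. Since
\[
\log u = \tfrac12 \log(B+C) - \log C - \bar{n}\,\log(A+C),
\]
the identity $\nabla_0(\log u) = \nabla_0 u / u$ reduces \eqref{eq:grad-log-u-1} to computing $\nabla_0$ of the three scalar terms via the chain rule and Lemma \ref{lem:derivatives-ABCt}. Using $\nabla_0 C = \tfrac{B}{C}\vec v + \tfrac{t}{C}\vec w$ one obtains $\nabla_0(B+C) = \tfrac{C+B}{C}\vec v + \tfrac{t}{C}\vec w$ and $\nabla_0(A+C) = \vec u + \tfrac{B}{C}\vec v + \tfrac{t}{C}\vec w$. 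Dividing by $B+C$, $C$, and $A+C$ respectively and collecting the $\vec u$, $\vec v$, $\vec w$ components produces precisely the coefficients $-\bar{n}R$, $P - \bar{n}\tfrac{B}{C}R$, and $-(Q + \bar{n}\tfrac{t}{C}R)$, once one recognizes that $\tfrac{1}{2C} - \tfrac{B}{C^2} = \tfrac{C-2B}{2C^2} = P$ and $\tfrac{t}{C^2} - \tfrac{t}{2C(B+C)} = \tfrac{(C+2B)t}{2C^2(C+B)} = Q$ in the notation \eqref{eq:PQR}. This establishes \eqref{eq:grad-log-u-1}.

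For \eqref{eq:grad-log-u-2}, by Proposition \ref{prop:log} it suffices to compute $\|\nabla_0 \log u\|_h^2$ from \eqref{eq:grad-log-u-1}. Writing $\tfrac{\nabla_0 u}{u} = \alpha\vec v + \beta \vec w + \gamma \vec u$ with $\alpha = P - \bar{n}\tfrac{B}{C}R$, $\beta = -(Q + \bar{n}\tfrac{t}{C}R)$, $\gamma = -\bar{n}R$, the relations \eqref{eq:uvw-1}, \eqref{eq:uvw-2}, \eqref{eq:uvw-3} (in particular $\langle \vec v,\vec w\rangle = \langle \vec u,\vec w\rangle = 0$) collapse the expansion to
\[
\|\nabla_0 \log u\|_h^2 = (\alpha^2 + \beta^2)(2B-A) + \gamma^2 A + 2\alpha\gamma A.
\]
I would then simplify $\alpha^2+\beta^2$ using the defining relation $t^2 = C^2 - B^2$: the $\bar n^2$ contributions combine to $\bar n^2 R^2 (B^2+t^2)/C^2 = \bar n^2 R^2$, the cross term is proportional to $Qt - PB$, and the remaining term is $P^2 + Q^2$. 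A short computation using $t^2 = (C-B)(C+B)$ gives the clean identities $Qt - PB = \tfrac12$ and $P^2 + Q^2 = \tfrac1{2C(C+B)}$, whence $\alpha^2 + \beta^2 = \tfrac1{2C(C+B)} + \tfrac{\bar n R}{C} + \bar n^2 R^2$.

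Substituting this back, grouping by powers of $\bar n$, and using $R = (C+A)^{-1}$ and $2AP = A(C-2B)/C^2$ then finishes the job: the $\bar n^0$ coefficient is $\tfrac{2B-A}{2C(C+B)}$ immediately; the $\bar n^1$ coefficient is $R\big(\tfrac{2B-A}{C} - 2AP\big) = \tfrac{2(AB-AC+BC)}{C^2(C+A)}$; and the $\bar n^2$ coefficient is $R^2\big(2B + \tfrac{2AB}{C}\big) = \tfrac{2B}{C(C+A)}$, which is \eqref{eq:grad-log-u-2}. The only real obstacle is bookkeeping in the algebraic simplification of $P^2 + Q^2$ and $Qt - PB$; everything else is a routine application of the chain rule together with the orthogonality relations among $\vec u$, $\vec v$, $\vec w$.
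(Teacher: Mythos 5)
Your proposal is correct and follows essentially the same route as the paper: compute $\nabla_0(\log u)$ term by term from $\log u = \tfrac12\log(B+C)-\log C-\bar n\log(A+C)$ using Lemma \ref{lem:derivatives-ABCt}, then obtain \eqref{eq:grad-log-u-2} via the orthogonality relations \eqref{eq:uvw-1}--\eqref{eq:uvw-3} together with the identities $P^2+Q^2 = \tfrac1{2C(C+B)}$ and $tQ-BP=\tfrac12$ derived from $t^2=C^2-B^2$. You have in fact supplied more of the final bookkeeping (the grouping by powers of $\bar n$) than the paper, which leaves that step to the reader.
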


\begin{proof}
Since $\log(u) = \tfrac12 \log(B+C) - \log(C) - \bar{n} \log(A+C)$ we have
$$
\frac{\nabla_0 u}{u}  = \frac12 \frac{\nabla_0(B+C)}{B+C} - \frac{\nabla_0 C}{C} - \bar{n} \frac{\nabla_0(A+C)}{A+C}\,.
$$
Formula \eqref{eq:grad-log-u-1} now follows by using the formulas for the horizontal gradients of $A$, $B$ and $C$ in the previous lemma. To obtain \eqref{eq:grad-log-u-2}, we use the identity $t^2 = C^2 - B^2$ to conclude that
$$
P^2 + Q^2 = \frac{(C-2B)^2}{4C^4} + \frac{(C+2B)^2t^2}{4C^4(C+B)^2} = \frac{1}{2C(C+B)}
$$
and
$$
tQ - BP = \frac{(C+2B)t^2}{2C^2(C+B)} - \frac{(C-2B)B}{2C^2} = \frac12.
$$
The conclusion follows from \eqref{eq:grad-log-u-1} by algebraic manipulations and \eqref{eq:uvw-1}, \eqref{eq:uvw-2}, and \eqref{eq:uvw-3}.
\end{proof}

Since $u=N^{2-Q}$ and $Q = 2n+2$, we find that
$$
\frac{\nabla_0 N}{N} = - \frac1{2n} \, \frac{\nabla_0 u}{u} = -\frac1{2\bar{n}+1} \left( (P - \bar{n} \frac{B}{C} R) \vec{v} - (Q + \bar{n} \frac{t}{C} R) \vec{w} - \bar{n} R \vec{u} \right)
$$
and
$$
\frac{||\nabla_0 N||_h^2}{N^2} = \frac1{4n^2} \, \frac{||\nabla_0 u||_h^2}{u^2} = \frac1{(2\bar{n}+1)^2} \left(  \frac{2B-A}{2C(C+B)} + \bar{n} \frac{2(AB-AC+BC)}{C^2(C+A)} + \bar{n}^2\frac{2B}{C(C+A)} \right) \, .
$$

Before proceeding with the proof of Theorem \ref{th:anisotropic-Heis-groups}, we take a brief detour to confirm that $u$ is indeed $\cL$-harmonic in the complement of the origin. This result has already been proved in \cite{bdz:anisotropic}. From \eqref{eq:Laplacian-log-v} we see that
$$
\frac{\cL u}{u} = \diver_0 \left( \frac{\nabla_0 u}{u} \right) + \frac{||\nabla_0 u||_h^2}{u^2}.
$$
In order to compute the first term on the right hand side of this equation, we need to compute the values of $\nabla_0 P$, $\nabla_0 Q$, and $\nabla_0 R$, as well as $\diver_0(\vec{u})$, $\diver_0(\vec{v})$, and $\diver_0(\vec{w})$. We record these in the following lemma, whose proof is left as an exercise for the reader.

\begin{lemma}\label{lem:derivatives-of-PQR-uvw}
For the vector-valued functions $\vec{u}$, $\vec{v}$, and $\vec{w}$ introduced in Lemma \ref{lem:derivatives-ABCt}, we have
$$
\diver_0(\vec{u}) = 1, \quad \diver_0(\vec{v}) = 2n-1 = 2\bar{n}, \quad \mbox{and} \quad \diver_0(\vec{w}) = 0.
$$
For the functions $P$, $Q$, and $R$ introduced in \eqref{eq:PQR}, we have
$$
\nabla_0 P = \frac{4B^2-BC-2C^2}{2C^4}\vec{v} - \frac{2B-C}{2C^4} t \vec{w},
$$
$$
\nabla_0 Q = \frac{C-4B}{2C^4} t \vec{v} - \frac{C^3+3BC^2-3B^2C-4B^3}{2C^4(C+B)} \vec{w},
$$
and
$$
\nabla_0 R = -\frac{B}{C(C+A)^2} \vec{v} - \frac{t}{C(C+A)^2} \vec{w} - \frac1{(C+A)^2} \vec{u}.
$$
\end{lemma}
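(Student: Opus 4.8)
The plan is to prove the lemma by direct computation, leaning on the base formulas already recorded in Lemma \ref{lem:derivatives-ABCt}. For the three divergence identities I would expand each of $\vec u$, $\vec v$, $\vec w$ in the $g_h$-orthonormal frame $X_1,Y_1,\dots,X_n,Y_n$ and use the key observation that their component functions depend only on the horizontal coordinates $x_1,y_1,\dots,x_n,y_n$, never on $t$. Since every frame field has the shape $X_i = \partial_{x_i} + (\,\cdot\,)\,\partial_t$ and $Y_i = \partial_{y_i} + (\,\cdot\,)\,\partial_t$, its action on a $t$-independent function agrees with the ordinary partial derivative, so $\diver_0$ collapses to the Euclidean divergence $\sum_j(\partial_{x_j}a_j + \partial_{y_j}\tilde a_j)$ of the coefficients. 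Substituting the coefficients of $\vec u = (\tfrac12 z_1,0)$, $\vec v = (\tfrac12 z_1,z')$, and $\vec w = (\tfrac12\bi z_1,\bi z')$ then gives $\diver_0\vec u = \tfrac12+\tfrac12 = 1$, $\diver_0\vec v = 1 + 2(n-1) = 2n-1$, and $\diver_0\vec w = 0$ (each summand for $\vec w$ is the $x_j$-derivative, resp.\ $y_j$-derivative, of a multiple of the conjugate variable).

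For the gradient formulas I would apply the chain rule in the intermediate variables $A,B,C,t$ and then substitute $\nabla_0 A = \vec u$, $\nabla_0 B = \vec v$, $\nabla_0 t = \vec w$, and $\nabla_0 C = \tfrac{B}{C}\vec v + \tfrac{t}{C}\vec w$ from Lemma \ref{lem:derivatives-ABCt}. The case $R = (C+A)^{-1}$ is immediate: $\nabla_0 R = -(C+A)^{-2}(\nabla_0 C + \nabla_0 A)$, and expanding $\nabla_0 C$ produces the displayed three-term expression in $\vec u,\vec v,\vec w$. For $P = \tfrac{C-2B}{2C^2}$ and $Q = \tfrac{(C+2B)t}{2C^2(C+B)}$ from \eqref{eq:PQR} I would differentiate each as a rational function of $A,B,C,t$, collect the $\vec v$- and $\vec w$-components (the $\vec u$-component is absent, since neither $P$ nor $Q$ involves $A$), and systematically eliminate $t^2$ using the relation $t^2 = C^2 - B^2$; the coefficients then reduce to rational functions of $B$ and $C$, which one simplifies to the displayed form.

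I expect the only genuine difficulty to be algebraic bookkeeping, heaviest for $\nabla_0 Q$: the denominator $C^2(C+B)$ must be cleared, the quotient rule applied, and $t^2$ eliminated several times before the $\vec w$-coefficient consolidates into $-\tfrac{C^3 + 3BC^2 - 3B^2C - 4B^3}{2C^4(C+B)}$. The computation for $\nabla_0 P$ runs along the same lines but with a simpler denominator, and nothing here is conceptually subtle. These identities, together with the divergences of $\vec u,\vec v,\vec w$, are precisely the ingredients needed to evaluate $\diver_0(\nabla_0 u/u)$ and thereby confirm, through \eqref{eq:Laplacian-log-v}, that $u$ is $\cL$-harmonic in $\G\setminus\{0\}$.
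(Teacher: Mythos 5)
Your method is exactly the intended one --- the paper itself offers no argument (the lemma is explicitly ``left as an exercise for the reader''), and your plan fills that gap correctly. The divergence computations are right as described: the components of $\vec{u}$, $\vec{v}$, $\vec{w}$ are independent of $t$, so $\diver_0$ collapses to the Euclidean divergence of the coefficients, giving $1$, $2n-1$, and $0$. The gradient computations via the chain rule through $A,B,C,t$, using $\nabla_0 A=\vec{u}$, $\nabla_0 B=\vec{v}$, $\nabla_0 t=\vec{w}$, $\nabla_0 C=\tfrac{B}{C}\vec{v}+\tfrac{t}{C}\vec{w}$, and the relation $t^2=C^2-B^2$, are likewise the right strategy, and your observation that $P$ and $Q$ contribute no $\vec{u}$-component is correct.

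One concrete warning: when you actually carry out the computation for $P=\tfrac{C-2B}{2C^2}$, you will not land on the displayed formula. The quotient rule gives
$$
\nabla_0 P = \frac{(4B-C)\,\nabla_0 C - 2C\,\vec{v}}{2C^3} = \frac{4B^2-BC-2C^2}{2C^4}\,\vec{v} + \frac{(4B-C)\,t}{2C^4}\,\vec{w},
$$
so the $\vec{w}$-coefficient is $(4B-C)t/(2C^4)$, not $-(2B-C)t/(2C^4)$ as stated in the lemma; a spot check at the point $x_1=2$, $y_1=x_2=y_2=0$, $t=1$ (where $A=B=1$, $C=\sqrt2$, and $Y_1P=\partial_t P=(4-\sqrt2)/8$) confirms this and rules out the printed coefficient. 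The statement of the lemma thus appears to contain a typo in that one term. Do not let this derail you: since $\langle\vec{v},\vec{w}\rangle_h=0$, only $\langle\nabla_0 P,\vec{v}\rangle_h$ enters the subsequent verification that $\diver_0(\nabla_0 u/u)=-\|\nabla_0 u\|_h^2/u^2$, so the $\vec{w}$-coefficient of $\nabla_0 P$ is harmless there, and the formulas for $\nabla_0 Q$ and $\nabla_0 R$ do check out against direct computation. Everything else in your outline is sound.
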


We now compute
\begin{equation*}\begin{split}
\diver_0 \left( \frac{\nabla_0 u}{u} \right) 
&= (P - \bar{n} \frac{B}{C} R) \diver_0(\vec{v}) + (Q + \bar{n} \frac{t}{C} R) \diver_0(\vec{w}) - \bar{n} R \diver_0(\vec{u}) \\
& \quad + \langle \nabla_0 P - \bar{n} \nabla_0(\frac{B}{C}R),\vec{v} \rangle + \langle \nabla_0 Q + \bar{n} \nabla_0(\frac{t}{C}R) ,\vec{w} \rangle - \bar{n} \langle \nabla_0 R,\vec{u} \rangle . 
\end{split}\end{equation*}
Using the formulas in the previous lemma and \eqref{eq:grad-log-u-2}, and after extensive algebraic manipulation, we conclude that $ \diver_0 ( \nabla_0 u/u ) = - ||\nabla_0 u||_h^2 / u^2$ and so $\cL u = 0$.

We now continue with the proof of Theorem \ref{th:anisotropic-Heis-groups}. From \ref{eq:infinity-Laplacian-log-v} we see that
\begin{equation}\begin{split}\label{eq:LinftyN}
\frac{\cL_\infty N}{N^3} 
= \cL_\infty(\log(N)) + \frac{||\nabla_0 N||_h^4}{N^4} 
= \left( -\frac1{8n^3} \right) \cL_\infty(\log(u)) + \left( \frac1{16n^4} \right)  \frac{||\nabla_0 u||_h^4}{u^4}.
\end{split}\end{equation}
Our next task is to compute
$$
\cL_\infty(\log(u)) = \frac12 \left\langle \nabla_0 \left( \frac{||\nabla_0 u||_h^2}{u^2} \right), \frac{\nabla_0 u}{u} \right\rangle
$$
for which we begin with the following proposition.

\begin{proposition}\label{prop:tech}
$\nabla_0(||\nabla_0 u||_h^2/u^2) = E_v(A,B,C) \vec{v} + E_w(A,B,C) t \vec{w} + E_u(A,B,C) \vec{u}$, where
\begin{equation*}\begin{split}
E_v(A,B,C) &= \frac{(AB-2BC+AC)+2t^2}{2C^3(C+B)} + \bar{n} \frac{A(C-B)(C+2B)(A+2C)+C^2(C^2-2B^2)}{C^4(C+A)^2} \\
&\qquad + \bar{n}^2 \frac{2(C+A)t^2-2B^2C}{C^3(C+A)^2} \, ,
\end{split}\end{equation*}
$$
E_w(A,B,C) = \frac{(A-2B)(B+2C)}{2C^3(C+B)^2} - \bar{n} \frac{2BC^2+A(C-2B)(2C-A)}{C^4(C+A)^2} - \bar{n}^2 \frac{2B(A+2C)}{C^3(C+A)^2} \, ,
$$
and
$$
E_u(A,B,C) = - \frac1{2C(C+B)} - \bar{n} \frac1{(C+A)^2} - \bar{n}^2 \frac{2B}{C(C+A)^2} \, .
$$
\end{proposition}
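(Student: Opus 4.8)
The plan is to differentiate the closed form for $||\nabla_0 u||_h^2/u^2$ recorded in \eqref{eq:grad-log-u-2}. Abbreviate $\Phi := ||\nabla_0 u||_h^2/u^2$; by Proposition~\ref{prop:grad-log-u}, $\Phi$ is the explicit rational function
$$
\Phi = \frac{2B-A}{2C(C+B)} + \bar{n}\,\frac{2(AB-AC+BC)}{C^2(C+A)} + \bar{n}^2\,\frac{2B}{C(C+A)}
$$
of the three scalar functions $A$, $B$, $C$ on $\G$. First apply the chain rule: with $\Phi_A$, $\Phi_B$, $\Phi_C$ the formal partial derivatives, $\nabla_0\Phi = \Phi_A\,\nabla_0 A + \Phi_B\,\nabla_0 B + \Phi_C\,\nabla_0 C$. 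Substituting $\nabla_0 A = \vec{u}$, $\nabla_0 B = \vec{v}$, $\nabla_0 C = \tfrac{B}{C}\vec{v} + \tfrac{t}{C}\vec{w}$ from Lemma~\ref{lem:derivatives-ABCt} and regrouping gives
$$
\nabla_0\Phi = \Phi_A\,\vec{u} + \Bigl(\Phi_B + \tfrac{B}{C}\,\Phi_C\Bigr)\vec{v} + \Bigl(\tfrac{1}{C}\,\Phi_C\Bigr)\,t\,\vec{w},
$$
which identifies the coefficients in the statement as $E_u = \Phi_A$, $E_v = \Phi_B + \tfrac{B}{C}\Phi_C$, and $E_w = \tfrac{1}{C}\Phi_C$.

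It then remains to compute these derivatives. Since the asserted forms of $E_u$, $E_v$, $E_w$ are organized by powers of $\bar{n}$, the natural procedure is to differentiate each of the three summands of $\Phi$ separately, assemble their contributions to $\vec{u}$, $\vec{v}$, and $t\,\vec{w}$, and, for each fixed power of $\bar{n}$, place the result over the common denominator appearing in the Proposition and reduce the numerator (at which point a factor $C+B$ or $C+A$ typically cancels). A last, cosmetic step uses the definition \eqref{eq:C}: after reduction certain numerators contain the combination $C^2-B^2$, which we replace by $t^2$; this produces exactly the $t^2$-terms visible in $E_v$ and shows that $E_u$ and $E_w$ contain no $t$.

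The only real obstacle is the volume of elementary algebra in the middle step — differentiating three rational functions, combining over common denominators, and checking numerator identities — which is entirely mechanical, with no conceptual content. A convenient safeguard against arithmetic slips is provided by homogeneity: assigning weight one to each of $A$, $B$, $C$ (hence to $t$ as well), $\Phi$ is homogeneous of degree $-1$, so $E_u$ and $E_v$ must emerge homogeneous of degree $-2$ and $E_w$ of degree $-3$, a property one checks term by term in the stated expressions.
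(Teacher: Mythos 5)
Your plan --- write $\Phi := ||\nabla_0 u||_h^2/u^2$ as the explicit function of $A,B,C$ recorded in \eqref{eq:grad-log-u-2}, apply the chain rule with Lemma \ref{lem:derivatives-ABCt}, and read off $E_u=\Phi_A$, $E_v=\Phi_B+\tfrac{B}{C}\Phi_C$, $E_w=\tfrac1C\Phi_C$ --- is exactly the computation the paper intends (its ``proof'' is the single sentence that this is pure algebra left to the reader), and the reduction to three partial derivatives is correct. The gap is that you never perform the step that constitutes the entire content of the proposition, and when one does perform it, it does \emph{not} return the printed coefficients: the constant and $\bar{n}^2$ parts of $E_u,E_v,E_w$ check out, but the $\bar{n}$-linear parts do not. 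Concretely, the $\bar{n}$-coefficient of $E_u=\Phi_A$ is
$$
\frac{\partial}{\partial A}\,\frac{2(AB-AC+BC)}{C^2(C+A)} \;=\; \frac{2\bigl[(B-C)(C+A)-(AB-AC+BC)\bigr]}{C^2(C+A)^2} \;=\; -\frac{2}{(C+A)^2},
$$
not $-1/(C+A)^2$ as stated, and the $\bar{n}$-coefficients of $E_v$ and $E_w$ are likewise inconsistent with the chain-rule output.

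This is not an artifact of \eqref{eq:grad-log-u-2}: along the ray $z_1=0$, $z'=r\,e_1$, $t=0$ one computes directly from \eqref{eq:anisotropic-u} that $\nabla_0\log u=-(1+2\bar{n})\,\vec{v}/(2B)$ with $2B=r^2$, hence $\Phi=(1+2\bar{n})^2/r^2$; since $\vec{u}=0$ and $t=0$ there, the $\vec{v}$-coefficient of $\nabla_0\Phi$ at $r=1$ is $X_2\Phi=-2(1+2\bar{n})^2=-2-8\bar{n}-8\bar{n}^2$, whereas the stated $E_v(0,\tfrac12,\tfrac12)$ evaluates to $-2-4\bar{n}-8\bar{n}^2$ (for $n=2$: $-32$ versus $-26$). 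So either the proposition as printed has errors in its $\bar{n}$-linear terms, or some convention is in force that your argument does not identify; in either case, declaring the remaining algebra ``entirely mechanical'' and omitting it leaves the proof without its substance, and your homogeneity safeguard cannot detect the problem, since the correct and the printed coefficients carry identical weights and differ only in rational prefactors. You need to actually carry out and record the differentiation and reconcile (or correct) the result against the statement; note also that the discrepancy propagates into the paper's subsequent evaluation of $\cL_\infty(\log u)$ at points $(0,z',0)$.
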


\noindent The proof is pure algebra using prior expressions for the gradients and divergences of relevant quantities, and is left to the reader. We have grouped the terms in this expression according to the vectors $\vec{u}$, $\vec{v}$, and $\vec{w}$, to facilitate the computation of the inner product of this expression with $\nabla_0 u/u$; see \eqref{eq:grad-log-u-1}. We now compute this inner product and use \eqref{eq:uvw-1}, \eqref{eq:uvw-2}, and \eqref{eq:uvw-3}. The result is a cubic polynomial in $\bar{n}$ which we write in brief as follows:
\begin{equation}\label{eq:Linftyu2}
\cL_\infty(\log(u)) = F_0(A,B,C) + F_1(A,B,C) \bar{n} + F_2(A,B,C) \bar{n}^2 + F_3(A,B,C) \bar{n}^3.
\end{equation}
We refer to \eqref{eq:grad-log-u-2} which we write in the form
$$
\frac{||\nabla_0 u||_h^2}{u^2} = G_0(A,B,C) + G_1(A,B,C) \bar{n} + G_2(A,B,C) \bar{n}^2.
$$
Returning to \eqref{eq:LinftyN}, we collect all of the above computations and write
\begin{equation}\label{eq:LinftyN2}
\frac{\cL_\infty N}{N^3} = \left( -\frac1{8n^3} \right) (F_0 + F_1 \bar{n} + F_2 \bar{n}^2 + F_3 \bar{n}^3) + \left( \frac1{16n^4} \right) (G_0 + G_1 \bar{n} + G_2 \bar{n}^2 )^2.
\end{equation}
Recalling that $\bar{n} = n-\tfrac12$ we recognize the preceding expression as the sum of two rational functions in $n$, the first of which is the quotient of two cubic polynomials in $n$ and the second of which is the quotient of two quartic polynomials in $n$. We expand this expression as a Laurent series in $n$. The constant term is 
$$
-\frac1{8} F_3 + \frac1{16} G_2^2.
$$
From \eqref{eq:grad-log-u-2} we recall that $G_2 = 2B/C(C+A)$ and after algebraic manipulation we conclude that
$$
F_3(A,B,C) = \frac{AB^2+2B^2C-AC^2}{C^3(C+A)^2}.
$$
Hence
$$
G_2^2 - 2F_3 = \frac{4B^2}{C^2(C+A)^2} - \frac{2(AB^2+2B^2C-AC^2)}{C^3(C+A)^2} = \frac{2A(C^2-B^2)}{C^3(C+A)^2} = \frac{2At^2}{C^3(C+A)^2}.
$$
We note that $G_2^2 - 2F_3$ vanishes precisely along the subspace $\Sigma = \{(z,t) \in \G : t=0\}$. It follows that \eqref{eq:LinftyN2}, when restricted to elements of $\Sigma$, can be written in the form
$$
\frac{\cL_\infty N}{N^3}(z,0) = H_{-1}(A,B,C) \frac1n + H_{-2}(A,B,C) \frac1{n^2} + H_{-3}(A,B,C) \frac1{n^3} + H_{-4}(A,B,C) \frac1{n^4} .
$$
The quantities $H_{-j}$, $j=1,2,3,4$, are polynomial expressions in the quantities $F_0$, $F_1$, $F_2$, $F_3$, $G_0$, $G_1$, $G_2$, and hence are rational functions in the variables $A,B,C$, or alternatively in the variables $z_1$ and $z'$. Moreover, the denominators of these rational functions are monomials in the terms $C+A$, $C+B$, and $C$. When restricted to the set $\Sigma \cap S_N = \{(z,0):N(z,0)=1\}$, the terms $C+A$, $C+B$, and $C$ are all strictly bounded away from zero. It follows that
$$
N \cL_\infty N = (N^4) \left( \frac{\cL_\infty N}{N^3} \right)
$$
is uniformly bounded from above on $\Sigma \cap S_N$ by a constant multiple of $\tfrac1n$. This completes the proof of the upper bound in the estimate \eqref{eq:NLN-asymptotics}. 

\smallskip

To prove the lower bound, we must show that the expression $N \, \cL_\infty N$ takes on a value comparable to $\tfrac1n$ at some point of $\Sigma \cap S_N$. In fact, we will show that this conclusion holds for all points of the form $(0,z',0) \in \Sigma \cap S_N$. To this end, we must evaluate the expressions $H_{-j}$, $j=1,2,3,4$, at points of the form $(0,z',0) \in \Sigma\cap S_N$. Observe that for points of this type, we have $B=C=\tfrac12$ and $A=0$. In this situation, equation \eqref{eq:grad-log-u-2} reduces to
\begin{equation*}\begin{split}
\frac{||\nabla_0 u||_h^2}{u^2} = G_0(0,\tfrac12,\tfrac12) + G_1(0,\tfrac12,\tfrac12) \bar{n} + G_2(0,\tfrac12,\tfrac12) \bar{n}^2 = 1 + 4 \bar{n} + 4 \bar{n}^2 \,.
\end{split}\end{equation*}
Similarly, the formula in Proposition \ref{prop:tech} simplifies to
\begin{equation*}\begin{split}
\nabla_0 \left( \frac{||\nabla_0 u||_h^2}{u^2} \right) &= E_v(0,\tfrac12,\tfrac12) \vec{v} + E_w(0,\tfrac12,\tfrac12) t \vec{w} + E_u(0,\tfrac12,\tfrac12) \vec{u} \\
&= \left( - 2 - 4 \bar{n} - 8 \bar{n}^2 \right) \vec{v} + \left( - 1 - 4 \bar{n} - 8 \bar{n}^2 \right) \vec{u}
\end{split}\end{equation*}
while \eqref{eq:grad-log-u-1} reduces to
$$
\frac{\nabla_0 u}{u} = \left( - 1 - 2 \bar{n} \right) \vec{v} - 2 \bar{n} \vec{u}.
$$
Then
\begin{equation*}\begin{split}
\cL_\infty(\log(u)) = F_0 + F_1 \bar{n} + F_2 \bar{n}^2 + F_3 \bar{n}^3 = 1 + 4 \bar{n} + 8 \bar{n}^2 + 8 \bar{n}^3 \,.
\end{split}\end{equation*}
Returning to \eqref{eq:LinftyN2} and using the fact that $\bar{n} = n-\tfrac12$, we find that
\begin{equation*}\begin{split}
\frac{\cL_\infty N}{N^3} = \frac{1}{2n} - \frac{1}{4n^2} \ge \frac1{4n} \, .
\end{split}\end{equation*}
This completes the proof of the lower bound in \eqref{eq:NLN-asymptotics}, and hence completes the proof of Theorem \ref{th:anisotropic-Heis-groups}. \qed

\bibliographystyle{acm}
\bibliography{biblio}
\end{document}